\newtheorem{prop}{Proposition}
\newtheorem{lem}{Lemma}
\newtheorem{thm}{Theorem}
\newtheorem{asmp}{Assumption}
\newtheorem{coro}{Corollary}
\newtheorem{exm}{Example}
\newtheorem{rmk}{Remark}
\DeclareMathOperator*{\argmin}{arg\,min}
\title{Well-Posedness and Efficient Algorithms for Inverse Optimal Transport with Bregman Regularization}
\author[1]{Chenglong Bao \thanks{Email: \texttt{clbao@tsinghua.edu.cn}}}
\author[2]{Zanyu Li \thanks{Corresponding author. Email: \texttt{lizy21@mails.tsinghua.edu.cn}}}
\author[3]{Yunan Yang \thanks{Email: \texttt{yunan.yang@cornell.edu}}}
\affil[1]{Yau Mathematical Sciences Center, Tsinghua University, Beijing 100084, China}
\affil[2]{Department of Mathematical Sciences, Tsinghua University, Beijing 100084, China}
\affil[3]{Department of Mathematics, Cornell University, Ithaca, New York 14853, USA}
\date{}
\begin{document}

\maketitle

\begin{abstract}
    This work analyzes the inverse optimal transport (IOT) problem under Bregman regularization. We establish well-posedness results, including existence, uniqueness (up to equivalence classes of solutions), and stability, under several structural assumptions on the cost matrix. On the computational side, we investigate the existence of solutions to the optimization problem with general constraints on the cost matrix and provide a sufficient condition guaranteeing existence. In addition, we propose an inexact block coordinate descent (IBCD) method for the problem with a strongly convex penalty term. In particular, when the penalty is quadratic, the subproblems admit a diagonal Hessian structure, which enables highly efficient element-wise Newton updates. We establish a linear convergence rate for the algorithm and demonstrate its practical performance through numerical experiments, including the validation of stability bounds, the investigation of regularization effects, and the application to a marriage matching dataset.
\end{abstract}

\section{Introduction}

Optimal Transport (OT) \cite{villani2008optimal} has become a fundamental tool in modern optimization, offering a powerful framework to analyze structured relationships between probability distributions. In recent years, it has attracted significant attention and has found widespread application in various areas, including data science \cite{peyre2019computational}, domain adaptation \cite{courty2016optimal}, and signal processing \cite{kolouri2017optimal}. 

The classic discrete optimal transport problem can be formulated as follows. Given two discrete probability distributions $\mu,\nu$, namely, $\mu,\nu\in\{x\in\mathbb{R}^n: \sum_{i=1}^n x_i = 1, x_i\geq0, 1\leq i\leq n\}$, define the set of transport plans: $\mathcal{U}(\mu,\nu) := \{ X\in\mathbb{R}^{n\times n} : X\mathbf{1} = \mu, X^\top\mathbf{1} = \nu, X\in\mathbb{R}_+^{n\times n} \}$. Given a cost matrix $C\in\mathbb{R}_+^{n\times n}$, the classic optimal transport problem is the following linear programming:
\begin{equation}\label{eqn:ot}
       \min_{X\in\mathcal{U}(\mu,\nu)}  \langle C, X\rangle.
\end{equation}
Due to the high computational cost of solving the standard linear programming formulation of optimal transport, one typically considers the following \textit{regularized optimal transport} problem instead:
\begin{equation}\label{eqn:breg_ot}
    \min_{X \in \mathcal{U}(\mu,\nu)} \langle C, X \rangle + \gamma\, \phi(X),
\end{equation}
where $\gamma > 0$ is a regularization parameter and $\phi$ is a strictly convex, twice continuously differentiable function. A popular choice for $\phi$ is the \textit{entropy function},
\[
H(X) = \sum_{i,j=1}^n X_{ij} \log X_{ij} - X_{ij} + 1,
\]
mainly because it admits an efficient and easy-to-implement solution via the Sinkhorn algorithm~\cite{cuturi2013sinkhorn}. Entropy-regularized optimal transport has found widespread use in machine learning applications~\cite{genevay2019entropy,cuturi2013sinkhorn}. Other options of regularizers have also been explored, such as the quadratic regularizer $\frac{1}{2}\|X\|_F^2$~\cite{lorenz2021quadratically,essid2018quadratically}.

While classical OT seeks an optimal coupling given a cost function, \textit{Inverse Optimal Transport} (IOT)~\cite{stuart2020inverse,li2019learning} inverts this paradigm by aiming to recover the cost matrix from observed transport plans. This inverse problem has important applications in areas such as marriage matching~\cite{dupuy2019estimating}, machine learning~\cite{wang2023self,carlier2023sista,shi2023understanding,persiianov2024inverse}, and legal case analysis~\cite{yu2022explainable}. However, it also poses unique theoretical and computational challenges. Given an observed transport plan $\hat{X} \in \mathcal{U}(\mu,\nu)$, the goal is to find a cost matrix $C \in \mathbb{R}_+^{n \times n}$ such that $\hat{X}$ solves the optimal transport problem~\eqref{eqn:ot} defined by $C$. In general, this inverse problem is ill-posed, as the solution to~\eqref{eqn:ot} may not be unique. To address this, we consider the \textit{inverse regularized optimal transport} problem. Define the forward map
\[
\mathcal{F}(C) := \arg\min_{X \in \mathcal{U}(\mu,\nu)} \left\{ \langle C, X \rangle + \gamma\, \phi(X) \right\}.
\]
The strict convexity of $\phi$ ensures that $\mathcal{F}$ is well-defined and single-valued. The inverse problem is then to recover a cost matrix $C$ such that $\mathcal{F}(C) = \hat{X}$. That is, find $C \in \mathcal{F}^{-1}(\hat{X})$ given an observed plan $\hat{X} \in \mathcal{U}(\mu,\nu)$.

A foundational contribution to IOT by~\cite{stuart2020inverse} introduced a Bayesian framework for inferring unknown cost functions from noisy observations. Building on this,~\cite{chiu2022discrete} proposed a discrete probabilistic inverse optimal transport model that leverages entropy-regularized OT to characterize the manifold of cross-ratio equivalent cost matrices and develop an MCMC sampler for posterior inference. An in-depth theoretical study of $\ell_1$-regularized inverse optimal transport was presented by~\cite{andrade2023sparsistency}. This work derived sufficient conditions for the robust recovery of the ground cost's sparsity pattern and established connections between iOT, graphical Lasso, and classical Lasso under varying entropic regularizations. In a related direction, \cite{liu2019learning} introduced OT-SI, an algorithm that learns an optimal transport cost function using subset correspondences (i.e., side information indicating that specific subsets of points across two datasets are related), thereby improving dataset alignment. From an optimization perspective, \cite{ma2020learning} reformulated IOT as an unconstrained convex optimization problem and developed efficient numerical methods applicable to both discrete and continuous settings. Finally, \cite{andrade2025learning} introduce a general methodology based on a new class of "sharpened Fenchel-Young losses" to solve inverse problems over probability measures, providing stability guarantees for applications in inverse unbalanced optimal transport (iUOT) and inverse gradient flows (iJKO).

While these contributions have significantly advanced the field of IOT, a complete understanding of the problem's foundational well-posedness properties remains elusive. Existing theoretical efforts have made noteworthy progress. For example, \cite{chiu2022discrete} characterized the equivalence class of cost matrices for the specific case of entropy regularization. The identifiability (i.e., uniqueness) of the cost function, sometimes inferred from multiple observed transport plans, has been explored in works such as \cite{gonzalez2024identifiability,gonzalez2024nonlinear,li2019learning}. Notably, \cite{li2019learning} establishes injectivity and provides stability bounds when the cost matrix is constrained to the space of distance matrices under entropy regularization. 

However, two critical gaps persist in the current literature. First, much of the existing analysis emphasizes the uniqueness or identifiability of solutions while often overlooking the more fundamental question of \emph{existence}: given an observed transport plan, is there a guaranteed cost matrix that could have produced it? Second, most theoretical studies on IOT are narrowly focused on entropy regularization. This singular emphasis on entropy is limiting, as the forward OT problem has shown that alternative regularizer, such as quadratic, Fermi-Dirac, or other entropy-like functions, can provide unique advantages for modeling diverse data structures and constraints. 

To develop a more versatile and robust IOT theory, it is crucial to account for this diversity of regularizers. Bregman divergences provide a general framework for addressing this need. Beyond covering the common entropy and quadratic regularizers, Bregman divergences provide a broad spectrum of alternatives (as summarized in Table \ref{tab:bregman_func}), each potentially suited to specific data characteristics or modeling requirements. This generality significantly expands the applicability of IOT theory. Therefore, advancing IOT within the Bregman divergence framework is essential for fully realizing its potential.

To bridge these gaps, this paper conducts a comprehensive study on inverse Bregman-regularized optimal transport. Our main contributions are as follows:
\begin{itemize}
    \item[1.] \textbf{Well-Posedness for General Bregman IOT:} We present a comprehensive analysis of well-posedness, establishing conditions for the existence, uniqueness (both of solution equivalence classes and, under additional structural assumptions, of the cost matrix itself), and stability of the recovered cost in IOT with general Bregman regularizers. This significantly extends prior work, which has largely focused on the entropy case.
    
    \item[2.] \textbf{Generalized Single-Level Formulation and Existence Theory:} While the reformulation of the bi-level IOT problem into a single-level convex program is known in the entropy setting~\cite{ma2020learning}, we extend this formulation to arbitrary Bregman regularizers. Importantly, we provide a rigorous analysis of the existence of solutions to the resulting single-level problem, laying the foundation for algorithmic development.
    
    \item[3.] \textbf{Efficient BCD Algorithm with Convergence Guarantees:} We introduce an efficient Block Coordinate Descent (BCD) algorithm (see~\Cref{alg:practical_bcd}) for solving the proposed IOT formulation. When paired with common quadratic penalties, the BCD subproblems admit fast, element-wise Newton updates due to their diagonal Hessians. We prove linear convergence of the BCD algorithm, providing theoretical justification for its effectiveness.
    
    \item[4.] \textbf{Numerical Validation:} We conduct extensive numerical experiments to validate our theoretical results, including the stability bounds in~\Cref{prop:stab}, and to assess the practical performance of the BCD algorithm. Our experiments include synthetic datasets and real-world applications, including the inference of marriage matching preferences.
\end{itemize}

The remainder of this paper is organized as follows. \Cref{sec:notation} introduces notations and preliminaries on Bregman functions and their properties. \Cref{sec:forward_ot} delves into Bregman-regularized OT, reviewing the forward problem. \Cref{sec:well_posed} provides a well-posedness analysis by investigating the properties of the forward mapping. In \Cref{sec:iot_model} we develop our framework for the inverse problem, provide the convex reformulation and detail the proposed BCD algorithm. \Cref{sec:experiment} presents the results of our numerical experiments. Finally, \Cref{sec:conclusion} concludes with a summary and potential directions for future work.

\section{Notations and Preliminaries}\label{sec:notation}

Throughout this paper, we denote by $\hat{X} \in \mathbb{R}^{n \times n}$ the observed transport plan. We use $\mathbb{R}_{++}$ to denote the set of strictly positive real numbers and $\mathbb{R}_+$ to denote the set of nonnegative real numbers. Denote $\mathcal{S}^n$ the set of all symmetric matrices in $\mathbb{R}^{n\times n}$. Denote $(x)_+ := \max\{0,x\}$. Given two vectors $a,b\in\mathbb{R}^n$, define a matrix $a\oplus b$ with $(a\oplus b)_{ij} = a_i+b_j$. Fix two discrete probability measures $\mu$ and $\nu$, and define the transport polytope
\[
\mathcal{U}(\mu,\nu) := \left\{ X \in \mathbb{R}_+^{n \times n} : X \mathbf{1} = \mu,\, X^\top \mathbf{1} = \nu \right\}.
\]
We always assume that $\hat{X} \in \mathcal{U}(\mu,\nu)$. Additionally, define the set of admissible symmetric cost matrices as
\[
S_h := \left\{ C \in \mathbb{R}_+^{n \times n} : C = C^\top,\ \operatorname{diag}(C) = 0 \right\}.
\]
Let $\mathcal{E}$ be a finite-dimensional Euclidean space. The \emph{effective domain} (or simply \emph{domain}) of a function $\phi : \mathcal{E} \to (-\infty, +\infty]$ is defined as
\[
\operatorname{dom} \phi := \left\{ x \in \mathcal{E} : \phi(x) < +\infty \right\}.
\]
A convex function $\phi$ is said to be \emph{proper} if there exists at least one $x \in \mathcal{E}$ such that $\phi(x) < +\infty$, and $\phi(x) > -\infty$ for all $x \in \mathcal{E}$. It is said to be \emph{closed} if all its lower level sets $\{ x \in \mathcal{E} : \phi(x) \leq \alpha \}$ are closed for all $\alpha \in \mathbb{R}$. A function $\phi$ is \emph{essentially smooth} if it is differentiable on the interior of its domain, $\operatorname{int}(\operatorname{dom} \phi) \neq \emptyset$, and for any sequence $(x_k)_{k \in \mathbb{N}} \subset \operatorname{int}(\operatorname{dom} \phi)$ converging to a boundary point $x \in \operatorname{bd}(\operatorname{dom} \phi)$, we have
\[
\lim_{k \to +\infty} \left\| \nabla \phi(x_k) \right\| = +\infty.
\]
A function $\phi$ is said to be of \emph{Legendre type} if it is proper, closed, strictly convex on $\operatorname{int}(\operatorname{dom} \phi)$, and essentially smooth. The \emph{Fenchel conjugate} $\phi^* : \mathcal{E} \to (-\infty, +\infty]$ of a function $\phi$ is defined as
\[
\phi^*(y) := \sup_{x \in \operatorname{int}(\operatorname{dom} \phi)} \left\{ \langle x, y \rangle - \phi(x) \right\}.
\]
The conjugate $\phi^*$ is always a closed convex function. Moreover, if $\phi$ is closed and convex, then $(\phi^*)^* = \phi$. A function $\phi$ is of Legendre type if and only if its Fenchel conjugate $\phi^*$ is also of Legendre type. In this case, the gradient map $\nabla \phi$ is a homeomorphism between $\operatorname{int}(\operatorname{dom} \phi)$ and $\operatorname{int}(\operatorname{dom} \phi^*)$, with inverse $(\nabla \phi)^{-1} = \nabla \phi^*$.

Let $\phi : I \rightarrow \mathbb{R}$ be a \emph{Bregman function}, i.e., a proper, closed, convex function that is differentiable on $\operatorname{int}(\operatorname{dom} \phi) \neq \emptyset$, where $I \subset \mathbb{R}$ is an interval. The \emph{Bregman divergence} generated by $\phi$ is defined by
\[
B_\phi(x \| y) := \phi(x) - \phi(y) - (x - y)\phi^\prime(y),
\]
for all $x \in \operatorname{dom} \phi$ and $y \in \operatorname{int}(\operatorname{dom} \phi)$. It holds that $B_\phi(x \| y) \geq 0$ for any such $x$ and $y$. Moreover, if $\phi$ is strictly convex on $\operatorname{int}(\operatorname{dom} \phi)$, then $B_\phi(x \| y) = 0$ if and only if $x = y$. Bregman divergences are always convex in the first argument and are invariant under the addition of affine terms to their generator $\phi$. Assume that $(0,1) \subset I$. Let $\mathcal{I} := I^{n \times n}$. For matrices $X, Y \in \mathcal{I}$, we extend the definition of Bregman divergence and the generator function entrywise:
\[
B_\phi(X \| Y) := \sum_{i=1}^n \sum_{j=1}^m B_{\phi}\left(X_{ij} \| Y_{ij}\right), \quad
\phi(X) := \sum_{i=1}^n \sum_{j=1}^m \phi\left(X_{ij}\right).
\]
We now state a basic assumption on the Bregman generator $\phi$:

\begin{asmp}\label{asmp:breg}
The effective domain of $\phi : I \rightarrow \mathbb{R}$ is an interval $I = \operatorname{dom} \phi$ with $(0,1) \subseteq \operatorname{int}(I)$. Moreover, $\phi$ is of Legendre type and is continuously differentiable ($C^1$) on $\operatorname{int}(I)$.
\end{asmp}

Under \Cref{asmp:breg}, we define the limiting derivatives
\begin{equation}\label{eq:limit_derivative}
\phi^\prime_0 := \lim_{x \rightarrow 0^+} \phi^\prime(x), \quad \phi^\prime_1 := \lim_{x \rightarrow 1^-} \phi^\prime(x),
\end{equation}
which exist (possibly taking the values $-\infty$ or $+\infty$) due to convexity.  These limiting derivatives, especially $\phi_0^\prime$, determine whether the optimal transport plan is strictly positive or admits sparsity, a structural property that is critical for the well-posedness of the inverse problem. Furthermore, \Cref{asmp:breg} guarantees that the Fenchel conjugate $\phi^*$ has domain $\operatorname{dom} \phi^*$ with $\operatorname{int}(\operatorname{dom} \phi^*)$ also an interval, and that $\phi^*$ is $C^1$ on $\operatorname{int}(\operatorname{dom} \phi^*)$.


\section{Bregman Regularized Optimal Transport}\label{sec:forward_ot}
We first recall some basic properties of Bregman-regularized optimal transport (OT) problem. Given a Bregman function $\phi$, denote $\psi = \phi^*$. As a consequence, we have $\nabla\psi = (\nabla\phi)^{-1}$. The Bregman-regularized OT problem can be formulated as follows:
\begin{equation}
    \min_{X\in \mathcal{U}(\mu,\nu)} \langle C,X\rangle+\gamma\phi(X),
\end{equation}
with $\gamma >0$. 
When $\phi$ is strictly convex, the solution of \eqref{eqn:breg_ot} is unique and the inverse problem is well defined. Here we list some examples of Bregman function in \Cref{tab:bregman_func}.
\begin{table}[]
    \centering
    \begin{tabular}{llll}
\hline$\phi(x) $ & $B_\phi(x \| y) $ & $\operatorname{dom} \phi$ & $\operatorname{dom} \psi$ \\
\hline Boltzmann-Shannon entropy & Kullback-Leibler divergence & & \\
$x \log x-x+1$ & $x \log \frac{x}{y}-x+y$ & $\mathbb{R}_{+}$ & $\mathbb{R}$ \\ \hline \vspace{-2mm} \\
Burg entropy & Itakura-Saito divergence & & \\
$x-\log x-1$ & $\frac{x}{y}-\log \frac{x}{y}-1$ & $\mathbb{R}_{++}$ & $(-\infty, 1)$  \\ \hline  \vspace{-2mm} \\
Fermi-Dirac entropy & Logistic loss function & & \\
$x \log x+(1-x) \log (1-x)$ & $x \log \frac{x}{y}+(1-x) \log \frac{1-x}{1-y}$ & {$[0,1]$} & $\mathbb{R}$ \\ \hline  \vspace{-2mm} \\
$\beta$-potentials $(0<\beta<1)$ & $\beta$-divergences & & \\
$\frac{1}{\beta(\beta-1)}\left(x^\beta-\beta x+\beta-1\right)$ & $\frac{1}{\beta(\beta-1)}\left(x^\beta+(\beta-1) y^\beta-\beta x y^{\beta-1}\right)$ & $\mathbb{R}_{+}$ & $\left(-\infty, \frac{1}{1-\beta}\right)$
\\
\hline
\end{tabular}
    \caption{Examples of the Bregman function.}
    \label{tab:bregman_func}
\end{table}

To characterize the solution and its relation to the cost matrix, we summarize the duality results and optimality conditions in the following lemma, the proof of which can be found in the Appendix.
\begin{lem}[Optimality Conditions for Forward Bregman OT]
\label{lem:forward_kkt}
Suppose Assumption \ref{asmp:breg} holds.
\begin{enumerate}
    \item[(a)] The Lagrange dual of \eqref{eqn:breg_ot} is given by the unconstrained problem:
    \begin{equation}\label{eqn:breg_dual_ot}
        \min_{u,v\in\mathbb{R}^n} \Psi(u,v):= \left\langle u\oplus v - C, \left(\nabla\psi\left(\frac{u\oplus v - C}{\gamma}\right)\right)_+ \right\rangle-\langle u,\mu\rangle-\langle v,\nu\rangle-\gamma\phi\left(\left(\nabla\psi\left(\frac{u\oplus v - C}{\gamma}\right)\right)_+\right).
    \end{equation}  
    \item[(b)]A transport plan $X \in \mathcal{U}(\mu, \nu)$ and dual potentials $u, v \in \mathbb{R}^n$ are optimal if and only if they satisfy the following KKT condition for all $i,j$:
    \begin{equation}\label{eqn:KKT_bregman}
         \left\{
    \begin{aligned}
        &X^C = \left(\nabla\psi\left(\frac{u\oplus v - C}{\gamma}\right)\right)_+, \\
        &X^C\in\mathcal{U}(\mu,\nu).
    \end{aligned}
    \right.
    \end{equation}
    \item[(c)] If the regularizer satisfies $\phi'_0 = -\infty$, the KKT condition in (b) simplifies to:
    \begin{equation}\label{eqn:KKT_bregman_positive}
        \left\{
    \begin{aligned}
        &X^C = \nabla\psi\left(\frac{u\oplus v - C}{\gamma}\right), \\
        &X^C\in\mathcal{U}(\mu,\nu),
    \end{aligned}
    \right.
    \end{equation}
    In particular, the dual problem can be simplified into:
    \begin{equation}\label{eqn:breg_dual_ot_postive}
        \min_{u,v\in\mathbb{R}^n} \Psi(u,v):= \gamma\psi\left(\frac{u\oplus v - C}{\gamma}\right) - \left\langle u,\mu\right\rangle - \left\langle v,\nu\right\rangle. 
    \end{equation}
\end{enumerate}
\end{lem}

With the basic properties of the forward problem presented, including the important KKT conditions, we now turn to the study of the well-posedness of the inverse Bregman-regularized OT problem.

\section{Well-Posedness of Inverse Bregman-Regularized OT Problems}\label{sec:well_posed}
To analyze the well-posedness of the inverse Bregman-regularized optimal transport problem, we begin by examining the properties of the solution to the forward problem. Consider the forward mapping
\[
\mathcal{F}(C) := \arg\min_{X \in \mathcal{U}(\mu,\nu)} \left\{ \langle C, X \rangle + \gamma\,\phi(X) \right\}.
\]
The study of the inverse problem's well-posedness can thus be reduced to analyzing the mapping properties of $\mathcal{F}(C)$. In particular, the \emph{existence} of a solution to the inverse problem, given an observed transport plan, is tied to characterizing the \emph{range} of $\mathcal{F}(C)$. The \emph{uniqueness} of the recovered cost matrix depends critically on whether $\mathcal{F}(C)$ is \emph{injective}. Likewise, the \emph{stability} of the inverse problem can be assessed by quantifying how perturbations in the optimal plan $\hat{X}$ affects the cost matrix $C = \mathcal{F}^{-1}(\hat{X})$.

Importantly, all of these mapping properties are influenced by the domain of the forward operator $\mathcal{F}$. That is, they depend on the structural assumptions placed on the set of admissible cost matrices. In this section, we examine the behavior of the forward map $\mathcal{F}$ under different domain choices and for various regularizers $\phi$. We consider two primary cases: first, when \textit{C} is in the general space of non-negative matrices (Section \ref{subsec:case1}), and second, when \textit{C} is restricted to the more structured space of symmetric matrices with zero diagonals (Section \ref{subsec:case2}).


\subsection{Case 1: Domain of \texorpdfstring{$\mathcal{F}(C)$}{} is  \texorpdfstring{$\mathbb{R}^{n\times n}$}{} }\label{subsec:case1}
First of all, we investigate the existence of the cost matrix $C$ given any observed transport plan $\hat{X}\in\mathbb{R}_+^{n\times n}$. Define the target set
$$
\mathcal{T}_\phi(\mu,\nu):=\begin{cases}
\mathcal{U}(\mu, \nu) & \text{if } \phi'_0, \phi'_1 \text{ are finite}\\
\mathcal{U}(\mu, \nu) \cap (0, 1]^{n \times n} & \text{if } \phi'_0=-\infty, \phi'_1 \text{ finite}\\
\mathcal{U}(\mu, \nu) \cap [0, 1)^{n \times n} & \text{if } \phi'_0 \text{ finite}, \phi'_1=+\infty\\
\mathcal{U}(\mu, \nu) \cap (0, 1)^{n \times n} & \text{if } \phi'_0=-\infty, \phi'_1=+\infty
\end{cases}
$$
where $\phi'_0$ and $\phi'_1$ are given in~\eqref{eq:limit_derivative}.
To better illustrate the target set $\mathcal{T}_\phi(\mu,\nu)$ for different Bregman regularizer $\phi$, we list some examples in \Cref{tab:target_set}.
\begin{table}
    \centering
    \begin{tabular}{cccc}
    \hline
       $\phi(x)$  & $\phi^\prime_0$ & $\phi^\prime_1$ & $\mathcal{T}_\phi(\mu,\nu)$\\
    \hline
       $\frac{1}{2}x^2$  & 0 & 1 & $\mathcal{U}(\mu, \nu)$\\
        $x\log x-x+1$ & $-\infty$ & -1 & $\mathcal{U}(\mu, \nu) \cap (0, 1]^{n \times n}$\\
       $x\log x+(1-x)\log(1-x)$  & $-\infty$ & $+\infty$ & $\mathcal{U}(\mu, \nu) \cap (0, 1)^{n \times n}$\\
    \hline
    \end{tabular}
    \caption{Examples of the target set $\mathcal{T}_\phi(\mu,\nu)$ for some common choices of Bregman function.}
    \label{tab:target_set}
\end{table}

We have the following existence result:
\begin{prop}\label{prop:-1}
    Assume the \Cref{asmp:breg} holds. The mapping $\mathcal{F}: \mathbb{R}^{n\times n}\rightarrow \mathcal{T}_\phi(\mu,\nu)$ is a surjection.
\end{prop}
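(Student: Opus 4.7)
The plan is to prove surjectivity by explicit construction: given any $\hat{X}\in\mathcal{T}_\phi(\mu,\nu)$, I will build a nonnegative cost matrix $C$ together with dual multipliers $u,v$ that together satisfy the KKT system~\eqref{eqn:KKT_bregman} with $X^C=\hat{X}$. Since the forward problem is strictly convex (because $\phi$ is strictly convex) on the linearly constrained polytope $\mathcal{U}(\mu,\nu)$, satisfaction of KKT is both necessary and sufficient for optimality, so this will force $\mathcal{F}(C)=\hat{X}$.

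The first step is to observe that the definition of $\mathcal{T}_\phi(\mu,\nu)$ is tailored exactly so that $\phi'(\hat{X}_{ij})$ is finite for every entry: in the interior $(0,1)$ this holds automatically, and a boundary value $\hat X_{ij}\in\{0,1\}$ is only permitted precisely when the corresponding one-sided limit $\phi'_0$ or $\phi'_1$ is finite (in which case, by essential smoothness of the Legendre function $\phi$, the corresponding endpoint lies in the interior of $\operatorname{dom}\phi$ and $\phi'$ extends continuously there). Hence the quantity $\phi'(\hat{X}_{ij})$ is well-defined and, being evaluated on a finite matrix, uniformly bounded.

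Next, I would ansatz $u=v=\lambda\mathbf{1}$ for a scalar $\lambda$ to be chosen, and define
\[
C_{ij} \;=\; 2\lambda - \gamma\,\phi'(\hat{X}_{ij}) \quad\text{when } \hat{X}_{ij}>0,
\]
together with $C_{ij}=2\lambda-\gamma\phi'_0$ on those entries (if any) where $\hat{X}_{ij}=0$. By Step~1 these numbers are all finite, so choosing $\lambda$ large enough makes every $C_{ij}\ge 0$. It remains to check KKT: on entries with $\hat{X}_{ij}>0$ the construction yields $(u_i+v_j-C_{ij})/\gamma=\phi'(\hat{X}_{ij})$, hence $\nabla\psi((u_i+v_j-C_{ij})/\gamma)=\hat{X}_{ij}>0$ and the positive-part truncation is inactive; on entries with $\hat{X}_{ij}=0$ we get $(u_i+v_j-C_{ij})/\gamma=\phi'_0$, so $\nabla\psi$ returns $0$ (or a nonpositive value that is truncated to $0$), matching $\hat{X}_{ij}$. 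The primal feasibility constraint $X^C\in\mathcal{U}(\mu,\nu)$ holds because $\hat{X}$ was assumed to lie in $\mathcal{U}(\mu,\nu)$ to begin with. Thus all conditions in~\eqref{eqn:KKT_bregman} are verified, and strict convexity promotes KKT to $\mathcal{F}(C)=\hat{X}$.

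I do not expect any serious obstacle: the construction is elementary, and the analytic content is concentrated in the carefully cased definition of $\mathcal{T}_\phi$, which does all the heavy lifting to ensure $\phi'(\hat{X}_{ij})$ is finite on the permitted domain. The only mild bookkeeping point is the case split on whether $\phi'_0$ and $\phi'_1$ are finite — a matter of matching the four cases in $\mathcal{T}_\phi$ to the construction of $C$ on boundary entries — which is essentially a tabulation rather than a real difficulty.
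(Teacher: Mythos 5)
Your proposal is correct and matches the paper's own argument essentially verbatim: both construct $C_{ij}=K-\gamma\phi'(\hat X_{ij})$ on the support (and $K-\gamma\phi'_0$ off it) with constant dual variables $u_i=v_j=K/2$ (your $\lambda=K/2$), choose the shift large enough for nonnegativity, and invoke the KKT system~\eqref{eqn:KKT_bregman} together with strict convexity to conclude $\mathcal{F}(C)=\hat X$. No substantive differences to report.
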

\begin{proof}
    Clearly the range of $\mathcal{F}$ for $C\geq0$ is contained in $\mathcal{T}_\phi(\mu,\nu)$. Without loss of generality, assume that both $\phi^\prime_0$ and $\phi^\prime_1$ are finite. Given any $\hat{X}\in \mathcal{T}_\phi(\mu,\nu)$. Define $E:=\{(i,j):\hat{X}_{ij} > 0\}$. The surjectivity of $\mathcal{F}$ is equivalent to find $u,v,C$ such that $C\in\mathbb{R}^{n\times n}$ and, by the KKT condition~\eqref{eqn:KKT_bregman}, 
    \begin{equation}\label{eqn:-1}
    \left\{ \begin{aligned}
        &u_i+v_j-C_{ij} = \gamma\nabla\phi(\hat{X}_{ij}),  & 
        \forall (i, j)\in E, \\
        & u_i+v_j-C_{ij} \leq \gamma\phi^\prime_0, & \text{otherwise.}
    \end{aligned}
        \right.
    \end{equation}
    Indeed, let $K:= \max\limits_{i,j}\gamma\nabla\phi(\hat{X}_{ij})$. Define 
    \[
    C_{ij} := \left\{
    \begin{aligned}
        &K-\gamma\nabla\phi(\hat{X}_{ij})\, , \quad &\forall (i,j)\in E,\\
        &K-\gamma\phi^\prime_0 \, , \quad &\text{otherwise.}
    \end{aligned}
    \right.
    \]
    Let $u_i = v_j = \frac{K}{2}$ for all $i,j$. One can verify that $C\in\mathbb{R}^{n\times n}$ by the convexity of $\phi$ and $u,v,C$ satisfies \eqref{eqn:-1}.
\end{proof}

\begin{rmk}
As shown in \Cref{prop:-1}, for any observed transport plan $\hat{X} \in \mathcal{U}(\mu,\nu)$, there always exists a nonnegative cost matrix $C$ such that $\hat{X}$ is the unique solution to the regularized optimal transport problem. This result holds provided that every entry of $\hat{X}$ lies within the effective domain of $\nabla\phi$. 
Importantly, the conclusion of \Cref{prop:-1} applies to all commonly used regularizers listed in \Cref{tab:bregman_func} and \Cref{tab:target_set}.

\end{rmk}

Nevertheless, the mapping $\mathcal{F}$ is generally not injective. This is because, for any cost matrix $C \in \mathbb{R}^{n\times n}$ and any vectors $a, b \in \mathbb{R}^n$ such that $C + a \oplus b \in \mathbb{R}^{n\times n}$, we have $\mathcal{F}(C) = \mathcal{F}(C + a \oplus b)$, since for all transport plans $X$, it holds that
\[
\langle C + a \oplus b, X \rangle = \langle C, X \rangle + \langle a, \mu \rangle + \langle b, \nu \rangle.
\]
Yet, when $\phi_0^\prime = -\infty$, one can show that $\mathcal{F}$ becomes injective up to additive transformations of the form $a \oplus b$. To formalize this, define an equivalence relation $\sim$ on $\mathbb{R}^{n \times n}$ by declaring $C \sim C'$ if there exist vectors $a, b \in \mathbb{R}^n$ such that $C' = C + a \oplus b$. Since $\sim$ is an equivalence relation, we denote the equivalence class of $C$ by
\[
[C] := \left\{ C' \in \mathbb{R}^{n \times n} : C' \sim C \right\}.
\]
Let $\mathbb{R}^{n \times n}/\sim$ be the corresponding quotient space. Define the induced mapping $\bar{\mathcal{F}} : \mathbb{R}^{n \times n}/\sim \,\rightarrow\, \mathcal{T}_\phi(\mu, \nu)$ by
\[
\bar{\mathcal{F}}([C]) := \mathcal{F}(C),
\]
which is well-defined due to the invariance of $\mathcal{F}$ under shifts by $a \oplus b$.

We now state the following proposition:

\begin{prop}\label{prop:bijection_whole_space}
    Assume that \Cref{asmp:breg} holds and $\phi_0^\prime = -\infty$. Then the map $\bar{\mathcal{F}}:\mathbb{R}^{n\times n}/\sim \, \rightarrow\, \mathcal{T}_\phi(\mu,\nu)$ is a bijection. In particular, for any $\hat{X}\in\mathcal{T}_\phi(\mu,\nu)$, the preimage of $\mathcal{F}$ in the whole space is a $2n-1$ dimensional hyperplane defined by 
    \begin{equation}\label{eq:preimage}
    \mathcal{F}^{-1}(\hat{X}) = \left\{C\in\mathbb{R}^{n\times n} : C_{ij} = \gamma\left(\max_{ij}\left\{\nabla\phi(\hat{X}_{ij})\right\}-\nabla\phi(\hat{X}_{ij}) \right)+a_i+b_j, \,\,\forall a_i,b_j\in\mathbb{R}, \,\,\forall 1\leq i,j\leq n\right\}.
    \end{equation}
\end{prop}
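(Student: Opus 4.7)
The plan is to verify the three assertions separately: surjectivity of $\bar{\mathcal{F}}$, injectivity on the quotient, and the explicit parametrization~\eqref{eq:preimage} of the preimage together with its dimension. The essential leverage is the simplified KKT system~\eqref{eqn:KKT_bregman_positive}, which under the hypothesis $\phi_0' = -\infty$ applies entrywise.

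Surjectivity is immediate from \Cref{prop:-1}: for any $\hat{X} \in \mathcal{T}_\phi(\mu,\nu)$ the earlier proposition already produces a nonnegative $C$ with $\mathcal{F}(C) = \hat{X}$, and its equivalence class $[C]$ maps to $\hat{X}$ under $\bar{\mathcal{F}}$.

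For injectivity I would argue that, because $\phi_0' = -\infty$, the non-negativity constraint in the full KKT system \eqref{eqn:KKT_bregman} is never active: any zero entry would force $\nabla\phi$ to diverge, so every entry of $\hat{X}\in\mathcal{T}_\phi(\mu,\nu)$ lies in $\operatorname{int}(\operatorname{dom}\phi)$. Therefore \eqref{eqn:KKT_bregman_positive} furnishes, for each $C \in \mathcal{F}^{-1}(\hat{X})$, dual potentials $u, v \in \mathbb{R}^n$ satisfying $\gamma \nabla\phi(\hat{X}_{ij}) = u_i + v_j - C_{ij}$ for every $(i,j)$. Given two preimages $C_1, C_2$ with respective potentials $(u^{(1)}, v^{(1)})$ and $(u^{(2)}, v^{(2)})$, subtracting the two identities kills the $\nabla\phi(\hat{X})$ term and yields $C_1 - C_2 = (u^{(1)} - u^{(2)}) \oplus (v^{(1)} - v^{(2)})$, i.e.\ $C_1 \sim C_2$. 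This is exactly injectivity of $\bar{\mathcal{F}}$, and the same identity, rearranged with the constant $\gamma \max_{i,j}\nabla\phi(\hat{X}_{ij})$ absorbed into a relabeling of the potentials, produces the closed-form description~\eqref{eq:preimage}. The dimension count is then a linear-algebra observation: the map $(u, v) \mapsto u \oplus v$ from $\mathbb{R}^{2n}$ to $\mathbb{R}^{n \times n}$ has kernel spanned by $(\mathbf{1}, -\mathbf{1})$, hence image of dimension $2n-1$, and the preimage is the affine translate of this image.

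There is no genuine obstacle; the only point requiring care is ensuring that the simplified KKT system applies at \emph{every} index. Without $\phi_0' = -\infty$, zero entries in $\hat{X}$ would be admissible, and on the zero-support one would only retain the inequality $u_i + v_j - C_{ij} \leq \gamma \phi_0'$ from~\eqref{eqn:-1}; this would leave off-support slack in $C$ that is not of the form $u \oplus v$, breaking injectivity modulo $\sim$. The hypothesis $\phi_0' = -\infty$ is therefore precisely what the argument needs.
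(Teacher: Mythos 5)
Your proposal is correct and follows essentially the same route as the paper: surjectivity inherited from \Cref{prop:-1}, injectivity on the quotient by subtracting the two KKT identities $u_i + v_j - C_{ij} = \gamma\nabla\phi(\hat{X}_{ij})$ (valid at every index precisely because $\phi_0' = -\infty$ forces $\hat{X}$ to have no zero entries), the explicit representative $\hat{C}_{ij} = \gamma(\max_{ij}\nabla\phi(\hat{X}_{ij}) - \nabla\phi(\hat{X}_{ij}))$ from the construction in \Cref{prop:-1}, and the $2n-1$ dimension count via the kernel of $(a,b)\mapsto a\oplus b$. No gaps; your closing observation about where the argument breaks without $\phi_0' = -\infty$ matches the paper's remark pointing to \Cref{prop:non_injection}.
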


\begin{proof}
    For the bijectivity of $\bar{\mathcal{F}}$, we only need to show the injectivity of $\bar{\mathcal{F}}$ since its surjectivity is a direct consequence of the surjectivity of $\mathcal{F}$. Given $C^1,C^2\in\mathbb{R}^{n\times n}$, the proof of injectivity is reduced to proving $\mathcal{F}(C^1) = \mathcal{F}(C^2) \implies C^1\sim C^2$. Denote $\hat{X} := \mathcal{F}(C^1) = \mathcal{F}(C^2)$. Since we assume $\phi_0^\prime = -\infty$, the KKT condition in~\eqref{eqn:KKT_bregman} implies that there exists vectors $u^1,v^1,u^2,v^2\in\mathbb{R}^n$, such that for all $i,j$, we have
    $$
    u_i^1+v_j^1-C_{ij}^1 = \gamma\nabla\phi(\hat{X}_{ij}) = u_i^2+v_j^2-C_{ij}^2.
    $$
    Rearrange the variables and we have 
    $$
    C_{ij}^2 = C_{ij}^1 + (u_i^2-u_i^1)+(v_j^2-v_j^1).
    $$
    Denote $a = u^2-u^1$ and $b = v^2-v^1$. We therefore obtain $C^2 = C^1+a\oplus b$,  which implies that $C^1\sim C^2$ and yields the injectivity of $\bar{\mathcal{F}}$. 
    
    Now given $\hat{X}\in\mathcal{T}_\phi(\mu,\nu)$, define $\hat{C}\in\mathbb{R}_+^{n\times n}$ with $\hat{C}_{ij} = \gamma\left(\max_{ij}\{\nabla\phi(\hat{X}_{ij})\}-\nabla\phi(\hat{X}_{ij})\right)$. The proof of \Cref{prop:-1} implies that $\mathcal{F}(\hat{C}) = \hat{X}$. As a result, \eqref{eq:preimage} holds. 
    Clearly, $\operatorname{dim}([\hat{C}]) = \operatorname{dim}\{a\oplus b:a,b\in\mathbb{R}^n\} = 2n-1$.
\end{proof}

\begin{rmk}
    When the Bregman regularizer $\phi$ is chosen as Boltzmann-Shannon entropy function $\phi(x) = x\log x-x+1$, 
    \Cref{prop:bijection_whole_space} is reduced to \cite[Theorem 3.6]{chiu2022discrete}. It is worth noting that the assumption of $\phi_0^\prime = -\infty$ is essential in establishing the injectivity of $\bar{\mathcal{F}}$. As we will demonstrate later in \Cref{prop:non_injection}, injectivity may be lost when this condition is not met.  
\end{rmk}

\subsection{Case 2: Domain of \texorpdfstring{$\mathcal{F}(C)$}{} is \texorpdfstring{$S_h$}{} }\label{subsec:case2}
In many OT applications, the cost matrix is typically derived from a distance or metric. Consequently, in IOT problems, it is natural to seek a cost matrix that corresponds to a well-defined mathematical metric. Such matrices are usually nonnegative, symmetric, and have zero diagonal entries; that is, $C \in S_h$. The mapping properties of $\mathcal{F}$ are particularly well-behaved over the set $S_h$ when the condition $\phi_0^\prime = -\infty$ holds. This condition is satisfied by the entropy function $\phi(x) = x\log x-x+1$, one of the most commonly used regularizers in OT problems.


For any $X\in \mathcal{I}:= I^{n \times n}$ with $I = \operatorname{dom}\phi$, define a mapping $G_{\phi}^\gamma :\mathcal{I} \rightarrow \mathbb{R}^{n\times n} $ such that 
$$
\left( G_{\phi}^\gamma(X) \right)_{ij} = \frac{\gamma}{2}\left( \nabla\phi(X_{ii})+\nabla\phi(X_{jj}) - \nabla\phi(X_{ij})-\nabla\phi(X_{ji}) \right).
$$
Define 
$$
U_\phi:=\{X\in\mathcal{T}_\phi(\mu,\nu): G_{\phi}^\gamma(X)_{ij}\geq0, \forall 1\leq i,j\leq n\}.
$$
We have the following existence and uniqueness result:
\begin{prop}\label{prop:0}
    Suppose \Cref{asmp:breg} holds and $\phi^\prime_0 = -\infty$. We have that $U_\phi$ is nonempty and the restricted forward mapping $\mathcal{F}: S_h\rightarrow U_\phi$ is a bijection. Furthermore, $\forall \hat{X}\in U_\phi$, we have $\mathcal{F}|_{S_h}^{-1}(\hat{X}) = G_{\phi}^\gamma(\hat{X})$.
\end{prop}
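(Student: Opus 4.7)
The plan is to exploit the simplified KKT system~\eqref{eqn:KKT_bregman_positive} — valid because $\phi'_0=-\infty$ — throughout. In one direction, suppose $C\in S_h$ and $\mathcal{F}(C)=\hat X$; then there exist duals $u,v$ with $u_i+v_j-C_{ij}=\gamma\nabla\phi(\hat X_{ij})$. Setting $i=j$ and using $C_{ii}=0$ gives $u_i+v_i=\gamma\nabla\phi(\hat X_{ii})$, while adding the KKT identities at $(i,j)$ and $(j,i)$ and invoking $C_{ij}=C_{ji}$ cancels the dual variables and yields the explicit formula
\[
C_{ij}=\tfrac{\gamma}{2}\bigl(\nabla\phi(\hat X_{ii})+\nabla\phi(\hat X_{jj})-\nabla\phi(\hat X_{ij})-\nabla\phi(\hat X_{ji})\bigr) = G_\phi^\gamma(\hat X)_{ij}.
\]
This simultaneously identifies the inverse, certifies $\hat X\in U_\phi$ (since $C\geq 0$ forces $G_\phi^\gamma(\hat X)\geq 0$), and delivers injectivity of $\mathcal{F}|_{S_h}$, because any two preimages of $\hat X$ in $S_h$ must both coincide with $G_\phi^\gamma(\hat X)$. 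Non-emptiness of $U_\phi$ will then be a one-line corollary: take $C=0\in S_h$, let $\hat X_0:=\mathcal{F}(0)\in\mathcal{T}_\phi(\mu,\nu)$ (well-defined by strict convexity of $\phi$), and apply the formula to conclude $G_\phi^\gamma(\hat X_0)=0\in\mathbb{R}_+^{n\times n}$.

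Surjectivity is the substantive step. Given $\hat X\in U_\phi$, I would set $C:=G_\phi^\gamma(\hat X)$; symmetry and the zero-diagonal property follow by direct computation, and $C\geq 0$ holds by the defining inequality of $U_\phi$, so $C\in S_h$. What remains is to realise $\hat X$ as $\mathcal{F}(C)$ by producing KKT duals. Substituting the explicit form of $C$ into the stationarity equation, the required identity becomes
\[
u_i+v_j = \tfrac{\gamma}{2}\bigl(\nabla\phi(\hat X_{ii})+\nabla\phi(\hat X_{jj})\bigr) + \tfrac{\gamma}{2}\bigl(\nabla\phi(\hat X_{ij})-\nabla\phi(\hat X_{ji})\bigr).
\]
The natural ansatz $u_i=v_i=\tfrac{\gamma}{2}\nabla\phi(\hat X_{ii})+r_i$ absorbs the separable terms and reduces the task to solving the coboundary equation $r_i-r_j=\tfrac{\gamma}{2}\bigl(\nabla\phi(\hat X_{ij})-\nabla\phi(\hat X_{ji})\bigr)$ for some $r\in\mathbb{R}^n$.

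The main obstacle is exactly the solvability of this coboundary equation, equivalently the assertion that the antisymmetric right-hand side lies in the rank-two subspace $\{(r_i-r_j)_{ij}:r\in\mathbb{R}^n\}$ for every $\hat X\in U_\phi$. My strategy is to attack this via the dual problem~\eqref{eqn:breg_dual_ot_postive} evaluated at $C=G_\phi^\gamma(\hat X)$: its strictly diagonally dominant Hessian produces a shift-unique minimiser $(u^\star,v^\star)$ whose associated primal $\hat X^\star:=\mathcal{F}(C)\in\mathcal{T}_\phi(\mu,\nu)$ must, by the first paragraph, satisfy $C=G_\phi^\gamma(\hat X^\star)$. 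Thus $G_\phi^\gamma(\hat X)=G_\phi^\gamma(\hat X^\star)$, and I would close the argument by combining this identity with the shared marginal constraints $\hat X,\hat X^\star\in\mathcal{U}(\mu,\nu)$ and the strict monotonicity of $\nabla\phi$ to force $\hat X=\hat X^\star$. This last step — establishing injectivity of $G_\phi^\gamma$ on $U_\phi$ modulo the marginal constraints — is what I expect to require the most care, and it turns the dual certificate $(u^\star,v^\star)$ into the KKT witness for $\hat X$, completing the bijection.
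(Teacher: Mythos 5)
The first half of your argument---the inclusion $\mathcal{F}(S_h)\subseteq U_\phi$, the explicit formula for the inverse, injectivity, and non-emptiness---is correct and coincides with the paper's proof: set $i=j$ in the stationarity condition to get $u_i+v_i=\gamma\nabla\phi(\hat{X}_{ii})$, then add the $(i,j)$ and $(j,i)$ equations and use $C_{ij}=C_{ji}$ to cancel the duals and read off $C_{ij}=G_\phi^\gamma(\hat{X})_{ij}\ge 0$.

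The surjectivity half is where your proof is not closed, and the step you yourself flag as needing ``the most care'' in fact fails. You correctly reduce surjectivity to the coboundary equation $r_i-r_j=\tfrac{\gamma}{2}s_{ij}$ with $s_{ij}:=\nabla\phi(\hat{X}_{ij})-\nabla\phi(\hat{X}_{ji})$, whose solvability is exactly the cocycle identity $s_{ij}+s_{jk}+s_{ki}=0$. (The paper's proof chooses, up to an additive constant, $r_i=\tfrac{\gamma}{2}\bigl(\nabla\phi(\hat{X}_{1i})-\nabla\phi(\hat{X}_{i1})\bigr)$ and asserts that the KKT verification is routine; that verification also goes through only when this identity holds, so you have isolated precisely the point the paper elides.) Your proposed closure---deducing $\hat{X}=\hat{X}^\star$ from $G_\phi^\gamma(\hat{X})=G_\phi^\gamma(\hat{X}^\star)$ together with the shared marginals---cannot work for $n\ge 3$: $G_\phi^\gamma$ depends only on the symmetric combination $\nabla\phi(X_{ij})+\nabla\phi(X_{ji})$ and is therefore blind to the antisymmetric array $s$. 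Concretely, for $n=3$ and $\mu=\nu$ uniform, let $X$ have diagonal entries $0.8/3$, entries $0.15/3$ in positions $(1,2),(2,3),(3,1)$, and $0.05/3$ in positions $(2,1),(3,2),(1,3)$. Then $X$ and $X^\top$ both lie in $\mathcal{U}(\mu,\nu)\cap(0,1)^{3\times 3}$ and have identical $G_\phi^\gamma$, yet differ, so the injectivity you need is false. Worse, this $X$ belongs to $U_\phi$ (the diagonal entries dominate the off-diagonal ones and $\nabla\phi$ is increasing), while $s_{12}+s_{23}+s_{31}=3\bigl(\nabla\phi(0.15/3)-\nabla\phi(0.05/3)\bigr)>0$ by strict monotonicity of $\nabla\phi$; hence no symmetric cost matrix, a fortiori no $C\in S_h$, can realize $X$ as $\mathcal{F}(C)$. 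The cocycle identity is thus a genuine additional necessary condition for membership in $\mathcal{F}(S_h)$ that the definition of $U_\phi$ does not encode, so the gap is not an artifact of your route: surjectivity onto $U_\phi$ as defined cannot be established for $n\ge 3$ without adding this condition (or restricting to plans with $\nabla\phi(\hat{X})$ symmetric); for $n=2$ the condition is vacuous and both your argument and the paper's are complete.
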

\begin{proof}
    We first show that $\mathcal{F}(S_h) \subset U_\phi$ and $U_\phi$ is thus nonempty. For any $C\in S_h$, denote $\hat{X} = \mathcal{F}(C)$. Since for all $i,j$, $\hat{X}_{ij} > 0$, by the KKT condition~\eqref{eqn:KKT_bregman_positive}, there exist $u,v\in\mathbb{R}^n$ such that 
    \begin{numcases}{}
        &$u_i+v_i = \gamma\nabla\phi(\hat{X}_{ii}),$ \label{eqn:1} \\
         &$u_i+v_j-C_{ij} = \gamma\nabla\phi(\hat{X}_{ij}),  \ \forall i \neq j $\label{eqn:2}
    \end{numcases}
    From \eqref{eqn:1}, we obtain $v_i = \gamma\nabla\phi(\hat{X}_{ii}) - u_i$. Then by substituting it into \eqref{eqn:2}, we obtain
    \begin{equation}\label{eqn:3}
        u_i+\gamma\nabla\phi(\hat{X}_{jj}) - u_j-C_{ij} = \gamma\nabla\phi(\hat{X}_{ij}).
    \end{equation}
    Interchanging $i$ and $j$ in \eqref{eqn:3} and adding the resulting equation back to \eqref{eqn:3}, we then obtain the following result given the symmetry of $C$
    \begin{equation*}
    \gamma\nabla\phi(\hat{X}_{jj})+\gamma\nabla\phi(\hat{X}_{ii})-2C_{ij} = \gamma\nabla\phi(\hat{X}_{ij})+\gamma\nabla\phi(\hat{X}_{ji}).
    \end{equation*}
    Recall that $C_{ij}\geq 0$. Therefore, we must have $\nabla\phi(\hat{X}_{ii})+\nabla\phi(\hat{X}_{jj})\geq \nabla\phi(\hat{X}_{ij})+\nabla\phi(\hat{X}_{ji})$ and $C_{ij} $ being the $ij$-th entry of $G_{\phi}^\gamma(X)$. As a result, $\hat{X} \in U_\phi$.
    
    Next, we show that $\mathcal{F}: S_h\rightarrow U_\phi$ is a surjection, i.e., $U_\phi \subset \mathcal{F}(S_h)$. Given any $\hat{X}\in U_\phi$, we define the matrix $\hat{C}\in \mathbb{R}^{n\times n}$ whose $ij$-th entry is
    $$\hat{C}_{ij}:= \frac{\gamma}{2}\left( \nabla\phi(\hat{X}_{ii})+\nabla\phi(\hat{X}_{jj}) - \nabla\phi(\hat{X}_{ij})-\nabla\phi(\hat{X}_{ji}) \right).$$ 
    Obviously, $\hat{C}\in S_h$. Choose $u\in\mathbb{R}^n$ with $$u_i = \frac{\gamma}{2}\left( \nabla\phi(\hat{X}_{ii})+\nabla\phi(\hat{X}_{1i}) - \nabla\phi(\hat{X}_{11})-\nabla\phi(\hat{X}_{i1}) \right)$$ and choose $v\in\mathbb{R}^n$ with $v_i = \gamma\nabla\phi(\hat{X}_{ii}) - u_i$. One can easily check such $u,v,\hat{C}$ and $\hat{X}$ satisfy the KKT conditions in~\eqref{eqn:1} and~\eqref{eqn:2}. Therefore, $\hat{X} = \mathcal{F}(\hat{C})$ and $U_\phi \subset \mathcal{F}(S_h)$.

    Finally, if for $\hat{C},\Tilde{C}\in S_h$ we have $\mathcal{F}(\hat{C}) = \hat{X} = \mathcal{F}(\Tilde{C})$, by the argument above one must have $\hat{C}_{ij} = \frac{\gamma}{2}\left( \nabla\phi(\hat{X}_{ii})+\nabla\phi(\hat{X}_{jj}) - \nabla\phi(\hat{X}_{ij})-\nabla\phi(\hat{X}_{ji}) \right) = \Tilde{C}_{ij}$. Hence, the restricted forward mapping $\mathcal{F}: S_h\rightarrow U_\phi$ is an injection.
\end{proof}

\Cref{prop:0} states that for any observed transport plan $\hat{X} \in U_\phi$, one can recover a nonnegative, symmetric cost matrix with zero diagonal that is unique within the set $S_h$. This result holds in particular for the entropy regularizer $\phi(x) = x \log x - x + 1$. Furthermore, this uniqueness result immediately yields the following stability guarantee:

\begin{prop}\label{prop:stab}
    Suppose \Cref{asmp:breg} holds and $\phi^\prime_0 = -\infty$. Given $\hat{X},\Tilde{X}\in U_\phi$, let $\hat{C} = \mathcal{F}|_{S_h}^{-1}(\hat{X})$ and $\Tilde{C} = \mathcal{F}|_{S_h}^{-1}(\Tilde{X})$. Then
    \begin{equation}\label{eqn:stab}
        \| \hat{C} - \Tilde{C} \|_{\infty} \leq 2\gamma\| \nabla\phi(\hat{X}) - \nabla\phi(\Tilde{X}) \|_{\infty}.
    \end{equation}
\end{prop}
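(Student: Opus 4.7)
The plan is to reduce the stability statement to the explicit inversion formula established during the proof of \Cref{prop:0}. That argument showed that for any observed plan in $U_\phi$, the unique preimage in $S_h$ is given in closed form as
\[
\bigl(\mathcal{F}|_{S_h}^{-1}(X)\bigr)_{ij} = \frac{\gamma}{2}\bigl(\nabla\phi(X_{ii})+\nabla\phi(X_{jj})-\nabla\phi(X_{ij})-\nabla\phi(X_{ji})\bigr).
\]
So both $\hat{C}$ and $\Tilde{C}$ admit this representation in terms of $\nabla\phi(\hat{X})$ and $\nabla\phi(\Tilde{X})$ respectively, and the stability bound becomes a direct algebraic consequence rather than requiring any further analytic machinery.

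First I would write out the two formulas for $\hat{C}_{ij}$ and $\Tilde{C}_{ij}$ and subtract them, collecting the four differences $\nabla\phi(\hat{X}_{ii})-\nabla\phi(\Tilde{X}_{ii})$, $\nabla\phi(\hat{X}_{jj})-\nabla\phi(\Tilde{X}_{jj})$, $\nabla\phi(\hat{X}_{ij})-\nabla\phi(\Tilde{X}_{ij})$, and $\nabla\phi(\hat{X}_{ji})-\nabla\phi(\Tilde{X}_{ji})$. Then I would apply the triangle inequality to the linear combination; each of the four terms is bounded in absolute value by $\|\nabla\phi(\hat{X})-\nabla\phi(\Tilde{X})\|_\infty$, which produces the prefactor $\tfrac{\gamma}{2}\cdot 4 = 2\gamma$. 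Taking the maximum over $(i,j)$ yields exactly \eqref{eqn:stab}.

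There is essentially no obstacle here: the whole content of the stability estimate is packaged inside \Cref{prop:0}, which already gives a Lipschitz (in fact affine) map from $\nabla\phi(X)$ to the recovered $C$. The only thing worth remarking on is the tightness of the constant $2\gamma$, which comes from the fact that each entry of $C$ depends on four entries of $\nabla\phi(X)$ weighted by $\pm\tfrac{1}{2}$, so the worst-case $\ell^\infty\!\to\!\ell^\infty$ operator norm of the inversion map is exactly $2\gamma$. No appeal to the convexity or Legendre structure of $\phi$ is needed beyond what \Cref{prop:0} already uses, and no regularity assumption on $\nabla\phi$ enters beyond the fact that the formula is well-defined on $U_\phi$.
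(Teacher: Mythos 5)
Your proposal is correct and follows exactly the paper's own argument: both invoke the closed-form inversion $\hat{C}_{ij} = \tfrac{\gamma}{2}\bigl(\nabla\phi(\hat{X}_{ii})+\nabla\phi(\hat{X}_{jj})-\nabla\phi(\hat{X}_{ij})-\nabla\phi(\hat{X}_{ji})\bigr)$ from \Cref{prop:0}, subtract entrywise, and apply the triangle inequality to the four terms to obtain the constant $2\gamma$. No gaps.
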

\begin{proof}
    Note that
    \[
    \hat{C}_{ij} = \frac{\gamma}{2}\left( \nabla\phi(X_{ii})+\nabla\phi(X_{jj}) - \nabla\phi(X_{ij})-\nabla\phi(X_{ji}) \right).
    \]
    So we have for any $i,j$,
    \[
    \begin{aligned}
        &|\hat{C}_{ij} - \Tilde{C}_{ij}| \\
        \leq&\frac{\gamma}{2}\left( |\nabla\phi(\hat{X}_{jj}) - \nabla\phi(\Tilde{X}_{jj})| + |\nabla\phi(\hat{X}_{ii}) - \nabla\phi(\Tilde{X}_{ii})| + |\nabla\phi(\hat{X}_{ij}) - \nabla\phi(\Tilde{X}_{ij})| + |\nabla\phi(\hat{X}_{ji}) - \nabla\phi(\Tilde{X}_{ji})| \right) \\
        \leq& 2\gamma\| \nabla\phi(\hat{X}) - \nabla\phi(\Tilde{X}) \|_{\infty}
    \end{aligned}
    \]
    As a result, \eqref{eqn:stab} holds.
\end{proof}

\begin{rmk}
    To further explain the results from \Cref{prop:0} and \Cref{prop:stab}, we consider the entropy regularizer $\phi(x) = x \log x - x + 1$ as a concrete example. In this case, the inverse mapping restricted to $S_h$ takes the form
\[
\mathcal{F}|_{S_h}^{-1}(\hat{X})_{ij} = G_{\phi}^\gamma(\hat{X})_{ij} = \log\sqrt{\frac{X_{ii} X_{jj}}{X_{ij}X_{ji}}},
\]
and the set of admissible transport plans becomes $
U_\phi = \left\{ X \in \mathcal{T}_\phi(\mu,\nu) : X_{ii} X_{jj} \geq X_{ij} X_{ji}, \quad \forall\, 1 \leq i,j \leq n \right\}$. Under this setting, the stability result~\eqref{eqn:stab} specializes to $
    \| \hat{C} - \Tilde{C} \|_{\infty} \leq 2\gamma\| \log\hat{X} - \log\Tilde{X} \|_{\infty}$.
\end{rmk}

Now given a fixed vector $w\in\mathbb{R}^n$, we relax $S_h$ into $S_h^w := \{C\in\mathbb{R}_+^{n\times n}: C=C^\top, \, \operatorname{diag}(C) = w\}$. We have the following similar result:
\begin{coro}
    Suppose \Cref{asmp:breg} holds and $\phi^\prime_0 = -\infty$. Then $U_\phi^w:=\{ X\in\mathcal{T}_\phi(\mu,\nu):G_{\phi}^\gamma(X)_{ij} + (w_i+w_j)/2 \geq 0, \ \forall 1\leq i,j \leq n\}$ is nonempty and the restricted forward mapping $\mathcal{F}: S_h^w\rightarrow U_\phi^w$ is a bijection. And, $\forall \hat{X}\in U_\phi^w$, we have $\mathcal{F}|_{S_h^w}^{-1}(\hat{X}) = G_{\phi}^\gamma(\hat{X})+ \frac{1}{2}w\oplus w$. Furthermore, given $\hat{X},\Tilde{X}\in U_\phi^w$, let $\hat{C} = \mathcal{F}|_{S_h^w}^{-1}(\hat{X})$ and $\Tilde{C} = \mathcal{F}|_{S_h^w}^{-1}(\Tilde{X})$. Then~\eqref{eqn:stab} holds.
\end{coro}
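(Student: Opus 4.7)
The strategy is to reduce the corollary to \Cref{prop:0} and \Cref{prop:stab} via the affine reparametrization $C \mapsto C - \tfrac{1}{2} w \oplus w^\top$, exploiting the shift-invariance $\mathcal{F}(C) = \mathcal{F}(C + a \oplus b)$ valid for any $a, b \in \mathbb{R}^n$. Since $\tfrac{1}{2} w \oplus w^\top$ is itself an outer sum (namely $\tfrac{w}{2} \oplus \tfrac{w}{2}$), this shift is invisible to $\mathcal{F}$, and every $C \in S_h^w$ decomposes as $C = C^0 + \tfrac{1}{2} w \oplus w^\top$ with $C^0$ symmetric and zero-diagonal (though not necessarily nonnegative). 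This decomposition is exactly what lets me piggyback on the computations already carried out for $S_h$.

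First I would redo the KKT derivation from \Cref{prop:0} keeping $C_{ii} = w_i$ explicit. The equation $u_i + v_j - C_{ij} = \gamma \nabla\phi(\hat{X}_{ij})$ at $i = j$ gives $v_i = \gamma \nabla\phi(\hat{X}_{ii}) + w_i - u_i$; substituting into the off-diagonal equations, adding the $(i,j)$ and $(j,i)$ relations, and using symmetry of $C$ yields
\[
C_{ij} = G_\phi^\gamma(\hat{X})_{ij} + \frac{w_i + w_j}{2}.
\]
This formula simultaneously proves injectivity of $\mathcal{F}|_{S_h^w}$, identifies its inverse, and shows that the constraint $C_{ij} \geq 0$ coincides with the inequality defining $U_\phi^w$, so $\mathcal{F}(S_h^w) \subseteq U_\phi^w$. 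For the reverse inclusion, given $\hat{X} \in U_\phi^w$ I would set $\hat{C} := G_\phi^\gamma(\hat{X}) + \tfrac{1}{2} w \oplus w^\top$ and check that $\hat{C} \in S_h^w$: it is symmetric, has diagonal equal to $w$ because $G_\phi^\gamma(\hat{X})_{ii} = 0$, and is nonnegative by hypothesis. To verify $\mathcal{F}(\hat{C}) = \hat{X}$ I would combine shift-invariance with the explicit dual potentials $u, v$ already constructed in the surjectivity step of \Cref{prop:0} applied to $G_\phi^\gamma(\hat{X})$; crucially, the KKT verification in that step never uses the nonnegativity of the cost matrix, only that it is symmetric with zero diagonal, so it transfers intact here.

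Non-emptiness of $U_\phi^w$ follows because $w = \operatorname{diag}(C) \geq 0$ automatically forces $(w_i + w_j)/2 \geq 0$, making the defining inequality of $U_\phi^w$ weaker than that of $U_\phi$; hence $U_\phi \subseteq U_\phi^w$, and the right-hand side is nonempty by \Cref{prop:0}. Finally, the stability estimate \eqref{eqn:stab} is immediate from the inverse formula: the $\tfrac{1}{2} w \oplus w^\top$ contributions cancel in $\hat{C} - \tilde{C}$, leaving $\hat{C} - \tilde{C} = G_\phi^\gamma(\hat{X}) - G_\phi^\gamma(\tilde{X})$, so the four-term triangle-inequality argument of \Cref{prop:stab} transfers verbatim. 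The only delicate point is the bookkeeping of signs when propagating the diagonal shift through the KKT equations; beyond that, the corollary is a direct consequence of \Cref{prop:0} and \Cref{prop:stab} under the affine reparametrization, so I do not anticipate any substantive mathematical obstacle.
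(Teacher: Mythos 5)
Your proposal is correct and follows essentially the same route as the paper: the paper's proof likewise writes the KKT conditions with the diagonal fixed at $w$, adds the $(i,j)$ and $(j,i)$ relations to obtain $C_{ij} = G_\phi^\gamma(\hat{X})_{ij} + (w_i+w_j)/2$, and then defers the remaining surjectivity, non-emptiness, and stability claims to the arguments of \Cref{prop:0} and \Cref{prop:stab}. Your write-up simply fills in those deferred details (the shift-invariance under $\tfrac{1}{2}w\oplus w^\top$, the inclusion $U_\phi\subseteq U_\phi^w$ from $w\ge 0$, and the cancellation of the $w$-terms in $\hat{C}-\tilde{C}$), all of which are correct.
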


\begin{proof}
    By the KKT condition, there exist $u,v\in\mathbb{R}^n$ such that 
    \begin{numcases}{}
        &$u_i+v_i -w_i = \gamma\nabla\phi(\hat{X}_{ii}),$ \notag\\
         &$u_i+v_j-C_{ij} = \gamma\nabla\phi(\hat{X}_{ij}),  \ \forall i \neq j $,\notag
    \end{numcases}
    As a result,
    \begin{equation*}      \gamma\nabla\phi(\hat{X}_{jj})+\gamma\nabla\phi(\hat{X}_{ii})-2C_{ij} +w_i+w_j = \gamma\nabla\phi(\hat{X}_{ij})+\gamma\nabla\phi(\hat{X}_{ji}).
    \end{equation*}
    So we have $C_{ij} = G_{\phi}^\gamma(\hat{X})_{ij} + (w_i+w_j)/2$. The remainder of the proof follows similar arguments to those used in the proofs of~\Cref{prop:0} and~\Cref{prop:stab}.
\end{proof}

\begin{rmk}
The requirement of a fixed diagonal is essential for the injectivity of $\mathcal{F}$. Suppose we only assume that $C$ is a nonnegative symmetric matrix. Then, for any vector $w \in \mathbb{R}_+^n$, the matrix $C + w \oplus w$ is also nonnegative and symmetric. Moreover, for any transport plan $X \in \mathcal{U}(\mu,\nu)$, we have $
\langle C + w \oplus w, X \rangle = \langle C, X \rangle + \langle w, \mu \rangle + \langle w, \nu \rangle$. It follows that $\mathcal{F}(C + w \oplus w) = \mathcal{F}(C)$ for all $w \in \mathbb{R}_+^n$. However, as shown in \Cref{prop:bijection_whole_space}, when $\mathcal{F}$ is restricted to the space of nonnegative symmetric matrices, it remains injective up to additive transformations of the form $w \oplus w$.
\end{rmk}

If we further restrict the domain of admissible cost matrices, we can obtain analogous results. Define the set of all matrices satisfying the triangle inequality:
\[
\operatorname{MC} := \left\{ C \in S_h : C_{ij} \leq C_{ik} + C_{kj},\ \forall\, i,j,k \right\},
\]
and the set of all Euclidean distance matrices:
\[
\operatorname{ED} := \left\{ C \in \mathbb{R}^{n \times n} : \text{there exist } d \in \mathbb{N} \text{ and } x_1, \ldots, x_n \in \mathbb{R}^d \text{ such that } C_{ij} = \|x_i - x_j\|^2 \right\}.
\]
It is known that $\operatorname{ED}$ admits an equivalent characterization. Let
\[
\mathcal{K}_+^n := \left\{ A \in \mathcal{S}^n : x^\top A x \geq 0 \text{ for all } x \in \mathbb{R}^n \text{ satisfying } \mathbf{1}^\top x = 0 \right\}.
\]
As shown in~\cite{qi2013semismooth}, we have the identity
\[
\operatorname{ED} = S_h \cap (-\mathcal{K}_+^n).
\]
Now, define two subsets of $U_\phi$:
\[
V_\phi := \left\{ X \in U_\phi : 2\nabla\phi(X_{kk}) + \nabla\phi(X_{ij}) + \nabla\phi(X_{ji}) \geq \nabla\phi(X_{ik}) + \nabla\phi(X_{ki}) + \nabla\phi(X_{jk}) + \nabla\phi(X_{kj}),\ \forall\, i,j,k \right\},
\]
and
\[
W_\phi := \left\{ X \in U_\phi : -G_\phi^\gamma(X) \in \mathcal{K}_+^n \right\}.
\]
With these definitions, we now state the following corollaries:
\begin{coro}
    Suppose \Cref{asmp:breg} holds and $\phi^\prime_0 = -\infty$. Then $V_\phi$ is nonempty and the restricted forward mapping $\mathcal{F}: \operatorname{MC}\rightarrow V_\phi$ is a bijection. In addition, $\forall \hat{X}\in V_\phi$, we have $\mathcal{F}|_{\operatorname{MC}}^{-1}(\hat{X}) = G_\phi^\gamma(\hat{X})$. Furthermore, given $\hat{X},\Tilde{X}\in V_\phi$, let $\hat{C} = \mathcal{F}|_{\operatorname{MC}}^{-1}(\hat{X})$ and $\Tilde{C} = \mathcal{F}|_{\operatorname{MC}}^{-1}(\Tilde{X})$. Then~\eqref{eqn:stab} holds.
\end{coro}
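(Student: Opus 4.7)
The plan is to leverage \Cref{prop:0} directly. Since $\operatorname{MC}\subset S_h$, the forward map $\mathcal{F}$ restricted to $\operatorname{MC}$ is automatically injective, and the candidate inverse on the image must be $G_\phi^\gamma$. Hence the whole corollary reduces to two tasks: (i) verify that, for $X\in U_\phi$, the triangle inequality on $G_\phi^\gamma(X)$ is equivalent to the defining condition of $V_\phi$; and (ii) read off nonemptiness, surjectivity, and stability from this equivalence together with \Cref{prop:0} and \Cref{prop:stab}.

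For task (i), I would substitute the definition
\[
G_\phi^\gamma(X)_{ij}=\tfrac{\gamma}{2}\bigl(\nabla\phi(X_{ii})+\nabla\phi(X_{jj})-\nabla\phi(X_{ij})-\nabla\phi(X_{ji})\bigr)
\]
into the inequality $G_\phi^\gamma(X)_{ij}\le G_\phi^\gamma(X)_{ik}+G_\phi^\gamma(X)_{kj}$. The terms $\nabla\phi(X_{ii})$ and $\nabla\phi(X_{jj})$ cancel between the two sides, leaving a factor of $2\nabla\phi(X_{kk})$ on the right; after rearrangement the inequality becomes exactly
\[
\nabla\phi(X_{ik})+\nabla\phi(X_{ki})+\nabla\phi(X_{jk})+\nabla\phi(X_{kj})\le 2\nabla\phi(X_{kk})+\nabla\phi(X_{ij})+\nabla\phi(X_{ji}),
\]
which is the defining condition of $V_\phi$. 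Moreover, for $C\in S_h$ the requirement of \Cref{prop:0} that $\mathcal{F}(C)\in U_\phi$ is already accounted for, so the triangle-inequality refinement is the only extra constraint to track.

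For task (ii): injectivity of $\mathcal{F}|_{\operatorname{MC}}$ and the formula $\mathcal{F}|_{\operatorname{MC}}^{-1}(\hat X)=G_\phi^\gamma(\hat X)$ are immediate from \Cref{prop:0}. Given $C\in\operatorname{MC}$, \Cref{prop:0} yields $\mathcal{F}(C)\in U_\phi$; by (i), applied in the direction $C\in\operatorname{MC}\Rightarrow G_\phi^\gamma(\mathcal{F}(C))=C$ satisfies the triangle inequality, we conclude $\mathcal{F}(C)\in V_\phi$. Conversely, given $\hat X\in V_\phi\subset U_\phi$, \Cref{prop:0} gives $G_\phi^\gamma(\hat X)\in S_h$, and (i) shows it satisfies the triangle inequality, so $G_\phi^\gamma(\hat X)\in\operatorname{MC}$ and $\mathcal{F}(G_\phi^\gamma(\hat X))=\hat X$, establishing surjectivity. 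Nonemptiness of $V_\phi$ follows by exhibiting any element of $\operatorname{MC}$ (e.g.\ the zero matrix, or any genuine distance matrix on $n$ points) and taking its image under $\mathcal{F}$. Finally, the stability bound is verbatim the argument of \Cref{prop:stab}, since $\mathcal{F}|_{\operatorname{MC}}^{-1}$ and $\mathcal{F}|_{S_h}^{-1}$ are both given by $G_\phi^\gamma$, and the bound depended only on the entrywise expression for $G_\phi^\gamma$, not on which subdomain of $S_h$ we restrict to.

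There is no substantial obstacle: the corollary is essentially a bookkeeping specialization of \Cref{prop:0}, and the only real content is the algebraic identification in task (i) of the triangle inequality on $G_\phi^\gamma(X)$ with the defining inequality of $V_\phi$.
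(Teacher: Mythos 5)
Your proposal is correct and matches the route the paper intends: the paper states this corollary without proof, deferring (as in the preceding corollary) to ``similar arguments'' to those of \Cref{prop:0} and \Cref{prop:stab}, and your reduction --- checking that the triangle inequality on $G_\phi^\gamma(X)$ is algebraically equivalent to the defining inequality of $V_\phi$, then importing injectivity, surjectivity, the inverse formula, and stability from \Cref{prop:0} and \Cref{prop:stab} --- is exactly that adaptation. The cancellation computation in your task (i) is the only new content, and it is carried out correctly.
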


\begin{coro}
     Suppose \Cref{asmp:breg} holds and $\phi^\prime_0 = -\infty$. Then $W_\phi$ is nonempty and the restricted forward mapping $\mathcal{F}: \operatorname{ED}\rightarrow W_\phi$ is a bijection. Moreover, $\forall \hat{X}\in W_\phi$, we have $\mathcal{F}|_{\operatorname{ED}}^{-1}(\hat{X}) = G_\phi^\gamma(\hat{X})$. Furthermore, given $\hat{X},\Tilde{X}\in W_\phi$, let $\hat{C} = \mathcal{F}|_{\operatorname{ED}}^{-1}(\hat{X})$ and $\Tilde{C} = \mathcal{F}|_{\operatorname{ED}}^{-1}(\Tilde{X})$. Then~\eqref{eqn:stab} holds.
\end{coro}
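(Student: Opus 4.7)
The plan is to reduce this corollary to \Cref{prop:0} by matching the extra constraint defining $\operatorname{ED}$ within $S_h$ against the extra constraint defining $W_\phi$ within $U_\phi$. The key observation is the characterization $\operatorname{ED} = S_h \cap (-\mathcal{K}_+^n)$ recalled from~\cite{qi2013semismooth}. Combined with the identity $\mathcal{F}|_{S_h}^{-1}(\hat{X}) = G_\phi^\gamma(\hat{X})$ from \Cref{prop:0}, for any $C \in S_h$ with $\hat{X} := \mathcal{F}(C)$, the condition ``$C \in -\mathcal{K}_+^n$'' is equivalent to ``$-G_\phi^\gamma(\hat{X}) \in \mathcal{K}_+^n$'', which is precisely the defining condition of $W_\phi$. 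So the desired bijection is just the restriction of the $S_h \to U_\phi$ bijection of \Cref{prop:0} to this pair of matching subsets.

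Concretely, I would first verify that $\mathcal{F}$ sends $\operatorname{ED}$ into $W_\phi$: given $C \in \operatorname{ED} \subset S_h$, \Cref{prop:0} yields $\mathcal{F}(C) \in U_\phi$ together with $C = G_\phi^\gamma(\mathcal{F}(C))$, so $C \in -\mathcal{K}_+^n$ translates to $-G_\phi^\gamma(\mathcal{F}(C)) \in \mathcal{K}_+^n$, i.e., $\mathcal{F}(C) \in W_\phi$. Conversely, for $\hat{X} \in W_\phi$, the matrix $G_\phi^\gamma(\hat{X})$ lies in $S_h$ by \Cref{prop:0} and in $-\mathcal{K}_+^n$ by the $W_\phi$ constraint, so $G_\phi^\gamma(\hat{X}) \in S_h \cap (-\mathcal{K}_+^n) = \operatorname{ED}$, and $\mathcal{F}(G_\phi^\gamma(\hat{X})) = \hat{X}$ by \Cref{prop:0}. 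This gives both surjectivity and the inverse formula $\mathcal{F}|_{\operatorname{ED}}^{-1}(\hat{X}) = G_\phi^\gamma(\hat{X})$; injectivity descends immediately from that of $\mathcal{F}|_{S_h}$. For nonemptiness of $W_\phi$, I would exhibit the trivial witness $C = 0 \in \operatorname{ED}$ (realized by $x_1 = \cdots = x_n = 0 \in \mathbb{R}$), since then $\mathcal{F}(0) \in U_\phi$ and $G_\phi^\gamma(\mathcal{F}(0)) = 0 \in -\mathcal{K}_+^n$, giving $\mathcal{F}(0) \in W_\phi$.

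The stability bound~\eqref{eqn:stab} is then immediate: since the inverse map on $\operatorname{ED}$ is given by the same entrywise formula $G_\phi^\gamma$ used in \Cref{prop:stab}, the four-term triangle inequality carried out in that proof applies verbatim to $\hat{C} - \Tilde{C}$. There is essentially no genuine obstacle here beyond recognizing that the cone condition pulls back cleanly through $G_\phi^\gamma$ once \Cref{prop:0} has been established; the only points to double-check are that $\operatorname{ED} \subseteq S_h$ (nonnegativity, symmetry, and zero diagonal of squared Euclidean distances, all classical) and that $\mathcal{K}_+^n$ is a convex cone containing $0$ so that $C = 0$ qualifies as a witness for nonemptiness.
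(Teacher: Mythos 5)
Your proof is correct and follows the route the paper intends: the corollary is obtained by restricting the bijection $\mathcal{F}\colon S_h \to U_\phi$ of \Cref{prop:0}, using the identity $\operatorname{ED} = S_h \cap (-\mathcal{K}_+^n)$ together with the inverse formula $C = G_\phi^\gamma(\hat{X})$ to see that the cone constraint on $C$ corresponds exactly to the defining condition of $W_\phi$, with the stability bound inherited verbatim from \Cref{prop:stab}. The witness $C=0$ for nonemptiness and the pullback of the cone condition through $G_\phi^\gamma$ are both handled correctly.
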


However, when $\phi_0^\prime$ is finite (for example, in the case of the $\ell^2$ regularizer $\phi(x) = \frac{1}{2}x^2$), the mapping $\mathcal{F}$ is no longer injective, and the uniqueness of the recovered cost matrix fails. This is formalized in the following proposition:
\begin{prop}\label{prop:non_injection}
Suppose \Cref{asmp:breg} holds and $\phi_0^\prime$ is finite. Let $\hat{C}$ be a given cost matrix and define $\hat{X} := \mathcal{F}(\hat{C})$. If there exists $1 \leq i,j \leq n$ such that $\hat{X}_{ij} = 0$, then for all $\lambda > 0$, we have
\[
\mathcal{F}(\hat{C}) = \hat{X} = \mathcal{F}(\hat{C} + \lambda E^{(i,j)}),
\]
where $E^{(i,j)}$ denotes the $n \times n$ matrix with  $1$ in the $(i,j)$-th entry and zeros elsewhere.
\end{prop}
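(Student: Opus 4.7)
The plan is to use the KKT characterization in~\eqref{eqn:KKT_bregman} together with the uniqueness of the minimizer of the Bregman-regularized OT problem, which follows from strict convexity of $\phi$ under~\Cref{asmp:breg}. Specifically, it suffices to exhibit dual variables for the perturbed cost $\hat{C}' := \hat{C} + \lambda E^{(i,j)}$ under which $\hat{X}$ satisfies the KKT system; uniqueness then forces $\mathcal{F}(\hat{C}') = \hat{X}$.

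I would start by invoking~\eqref{eqn:KKT_bregman} to obtain $u, v \in \mathbb{R}^n$ such that
$$\hat{X}_{k\ell} = \left(\nabla\psi\!\left(\frac{u_k + v_\ell - \hat{C}_{k\ell}}{\gamma}\right)\right)_+ \quad \text{for all } 1 \leq k,\ell \leq n,$$
and $\hat{X} \in \mathcal{U}(\mu,\nu)$. The key observation is that the map $z \mapsto (\nabla\psi(z))_+$ is monotone non-decreasing in $z$, because $\nabla\psi$ is monotone non-decreasing on its domain (convexity of $\psi$) and the positive-part operator preserves monotonicity. Now consider the perturbed cost $\hat{C}'$. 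For $(k,\ell) \neq (i,j)$ the entry of $u \oplus v - \hat{C}'$ is unchanged, so the KKT equation is still satisfied by $\hat{X}_{k\ell}$. For $(k,\ell) = (i,j)$, the argument fed to $(\nabla\psi(\cdot))_+$ shifts downward by $\lambda/\gamma > 0$, so by monotonicity the right-hand side can only decrease while remaining $\geq 0$; since it already equals $\hat{X}_{ij} = 0$ for $\hat{C}$, it stays $0$ for $\hat{C}'$. Feasibility $\hat{X} \in \mathcal{U}(\mu,\nu)$ is unaffected. Hence $(u,v,\hat{X})$ satisfies the KKT system for $\hat{C}'$, and uniqueness gives $\mathcal{F}(\hat{C}') = \hat{X}$.

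The only delicate point, and where the hypothesis that $\phi'_0$ is finite actually enters, is to confirm that the expression $(\nabla\psi(z))_+$ remains well-defined (or admits the natural extension by zero) at the shifted argument $(u_i + v_j - \hat{C}_{ij} - \lambda)/\gamma$, which may lie at or below $\phi'_0$. Finiteness of $\phi'_0$ is precisely what guarantees that such arguments correspond to the ``inactive'' regime of the positive-part clipping --- the regime whose existence is the whole reason injectivity was lost in~\Cref{prop:non_injection}. The slack inherent in this clipping absorbs the perturbation of size $\lambda > 0$ at a single entry where the transport plan already vanishes, and no cancellation with other entries is required; this is why the argument collapses to a one-line monotonicity check once the KKT reformulation is in place.
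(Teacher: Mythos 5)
Your proposal is correct and follows essentially the same route as the paper's proof: both fix the dual variables $(\hat u,\hat v)$ from the KKT system~\eqref{eqn:KKT_bregman} for $\hat C$, observe that the single perturbed entry only shifts the argument of $(\nabla\psi(\cdot))_+$ downward so that monotonicity keeps the clipped value at $0$, and conclude by uniqueness of the minimizer. The paper phrases the monotonicity step via $\psi'\le 0$ at the shifted argument rather than via the composed map $z\mapsto(\nabla\psi(z))_+$, but this is only a cosmetic difference.
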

\begin{proof}
    By the KKT condition~\eqref{eqn:KKT_bregman}, there exist optimal dual variables $\hat{u},\hat{v}$ corresponding to $\hat{X}$ such that $\hat{X} = \left(\nabla\psi\left(\frac{\hat{u}\oplus \hat{v} - \hat{C}}{\gamma}\right)\right)_+$. Together with $\hat{X}_{ij} = 0$, we have that
    \[
    \psi^\prime\left(\frac{\hat{u}_i + \hat{v}_j - \hat{C}_{ij}}{\gamma}\right) \leq 0.
    \]
    The convexity of $\phi$ implies the monotonicity of $\phi^\prime$. Hence, we have
    \[
    \hat{u}_i + \hat{v}_j \leq \hat{C}_{ij} + \gamma\phi^\prime_0.
    \]
     For any $\lambda > 0$,
    \[
    \hat{u}_i + \hat{v}_j < \lambda + \hat{C}_{ij} + \gamma\phi^\prime_0,
    \]
    which yields 
    \[
    \psi^\prime\left(\frac{\hat{u}_i + \hat{v}_j - (\hat{C}_{ij}+\lambda)}{\gamma}\right) \leq 0.
    \]
    Consequently, $(\hat{X},\hat{u},\hat{v})$ satisfies the KKT condition of the following optimization problem
    \[
    \min_{X\in \mathcal{U}(\mu,\nu)} \langle \hat{C}+\lambda E^{(i,j)},X\rangle+\gamma\phi(X),
    \]
    which implies $\hat{X}$ is also the unique optimal solution to the above optimization problem and $\hat{X} = \mathcal{F}(\hat{C}+\lambda E^{(i,j)})$.
\end{proof}
Even if we further require the cost matrix $C\in \operatorname{ED}$, the uniqueness of IOT does not necessarily hold, as the following example suggests.
\begin{exm}
    Choose $0\leq a\leq b\leq 1$. Define $\hat{X} := \begin{pmatrix}
        a & 0 \\
        b-a& 1-b
    \end{pmatrix}$
    , $\mu = \begin{pmatrix} a\\ 1-a  \end{pmatrix}$ and $\nu = \begin{pmatrix} b\\ 1-b  \end{pmatrix}$. Choose any $\gamma \geq \max\{\frac{1}{2} +a-b,0\}$ and denote $C = \begin{pmatrix}
        0 & \gamma \\
        \gamma & 0
    \end{pmatrix}$. Direct computation shows that $\hat{X}$ is the unique solution to the following regularized optimal transport problem for all $\gamma \geq \max\{\frac{1}{2} +a-b,0\}$:
    \[
    \min_{X\in\mathcal{U}(\mu,\nu)} \langle C,X\rangle + \frac{1}{2}\|X\|_F^2.
    \]
\end{exm}
As demonstrated in \Cref{prop:non_injection}, the sparsity of the observed transport plan poses significant challenges for uniquely recovering the cost matrix $C$. This naturally leads to the following question: if we observe multiple sparse transport plans $X_1, \dots, X_n$, whose combined support covers every entry, can the corresponding cost matrix $C$ be uniquely determined, provided that such a cost matrix exists? Unfortunately, even under these extra conditions, uniqueness of $C$ is not guaranteed. This remains true even if we restrict $C$ to lie within $\operatorname{ED}$, as evidenced by a counterexample.
\begin{exm}
     Choose $X_1 = \begin{pmatrix}
        0.25 & 0.5 \\
        0 & 0.25
    \end{pmatrix}$ 
    and $X_2 = \begin{pmatrix}
        0.25 & 0 \\
       0.5 & 0.25
    \end{pmatrix}$. Let $\mu_1 =\nu_2 = (0.75, 0.25)^\top, \nu_1 = \mu_2 =(0.25,0.75)^\top.$ Choose any $\gamma \geq 0$ and denote $C = \begin{pmatrix}
        0 & \gamma \\
        \gamma & 0
    \end{pmatrix}$. Direct computation shows that 
    \[
    X_1 = \argmin_{X\in\mathcal{U}(\mu_1,\nu_1)} \langle C,X\rangle + \frac{1}{2}\|X\|_F^2,\quad \text{and}\quad 
    X_2 = \argmin_{X\in\mathcal{U}(\mu_2,\nu_2)} \langle C,X\rangle + \frac{1}{2}\|X\|_F^2,
    \]
    while $\operatorname{supp}(X_1)\bigcup\operatorname{supp}(X_2) = \{(i,j): 1\leq i,j \leq 2\}$. 
\end{exm}

\section{Inverse Bregman-Regularized OT as an Optimization Problem}\label{sec:iot_model}
Having established the well-posedness of the inverse problem in the previous section, we now shift our focus from theoretical properties to computational methodology. This section addresses two key goals: first, formulating the IOT problem as a tractable, single-level convex program and analyzing the existence of its solutions; and second, developing an efficient Block Coordinate Descent (BCD) algorithm with proven convergence guarantees to solve it. 

\subsection{Single-Level Formulation and Existence}
\label{sec:single-level}

A natural approach to inferring the cost matrix $C$ from an observed transport plan $\hat{X} \in \mathcal{T}_\phi(\mu, \nu)$ is to find a cost $C$ such that the resulting transport plan $\mathcal{F}(C)$ is as close as possible to $\hat{X}$. This can be formulated as the following optimization problem:
\begin{equation}\label{eqn:breg_inverse_ot}
    \inf_{C \in \mathcal{C}} B_\phi(\hat{X} \| \mathcal{F}(C)),
\end{equation}
where $\mathcal{C}$ is a closed and convex set representing prior knowledge on the cost matrix (e.g., non-negativity or symmetry), and $B_\phi$ is the Bregman divergence corresponding to the regularizer $\phi$.

This formulation, however, presents significant challenges. Since the forward map $\mathcal{F}(C)$ is itself the solution to an optimization problem (the forward OT problem), equation~\eqref{eqn:breg_inverse_ot} is a bilevel optimization problem. Such problems are notoriously difficult to analyze and solve. For a general choice of $\mathcal{C}$, establishing the existence of an optimal solution is non-trivial, and even when a solution exists, the computational cost can be prohibitive.

To create a more tractable framework, we seek to reformulate this bilevel problem into an equivalent single-level convex program. Define the function $F(Z) := \psi(Z) - \langle Z, \hat{X} \rangle$. It is clear that $F$ is convex over $\operatorname{dom} \psi$. Building on the work of~\cite{ma2020learning}, the bilevel optimization problem~\eqref{eqn:breg_inverse_ot} can be reduced to a simpler single-level convex optimization problem, provided that $\phi_0^\prime = -\infty$, as formalized in the following proposition:
\begin{prop}\label{prop:equivalence}
    Suppose \Cref{asmp:breg} holds, $\phi^\prime_0 = -\infty$, and the observed transport plan $\hat{X}\in\mathcal{T}_\phi(\mu,\nu)$. Then the bilevel IOT problem \eqref{eqn:breg_inverse_ot} is equivalent to the following single-level convex optimization problem:
    \begin{equation}\label{eqn:breg_iot_equiv}
        \inf_{C\in\mathcal{C}, u,v\in\mathbb{R}^n} E(u,v,C):= \psi\left(\frac{u\oplus v - C}{\gamma}\right)-\left\langle \frac{u\oplus v - C}{\gamma},\hat{X}\right\rangle.
    \end{equation}
    The equivalence relation is understood in the sense that if an optimal solution to one problem exists, then so does the optimal solution to the other. Furthermore, given the existence of optimal solutions to both problems, $C^*$ solves \eqref{eqn:breg_inverse_ot} if and only if $(u^{C^*}, v^{C^*}, C^*)$ solves \eqref{eqn:breg_iot_equiv}, where $(u^{C^*}, v^{C^*})$ is the optimal solution to \eqref{eqn:breg_dual_ot} with cost $C^*$.\end{prop}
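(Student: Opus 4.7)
The plan is to reduce the Bregman divergence $B_\phi(\hat{X}\|\mathcal{F}(C))$ to the dual objective $E(u,v,C)$ via a Fenchel calculation, thereby collapsing the bilevel structure into a joint minimization. The hypothesis $\phi_0^\prime=-\infty$ is crucial: it forces the forward optimizer $\mathcal{F}(C)$ to lie entrywise in the interior of $\operatorname{dom}\phi$, so the simplified KKT condition~\eqref{eqn:KKT_bregman_positive} gives $\mathcal{F}(C)=\nabla\psi\!\left(\tfrac{u^C\oplus v^C-C}{\gamma}\right)$ for any dual optimizer $(u^C,v^C)$ of~\eqref{eqn:breg_dual_ot_postive}, and hence $\nabla\phi(\mathcal{F}(C))=\tfrac{u^C\oplus v^C-C}{\gamma}$ is a well-defined finite matrix.

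The first computation I would carry out is the key algebraic identity. Starting from $B_\phi(\hat{X}\|Y)=\phi(\hat{X})-\phi(Y)-\langle\nabla\phi(Y),\hat{X}-Y\rangle$ and applying the Fenchel--Young equality $\phi(Y)+\psi(\nabla\phi(Y))=\langle Y,\nabla\phi(Y)\rangle$ at $Y=\mathcal{F}(C)$, the $\langle\nabla\phi(Y),Y\rangle$ terms cancel and one obtains
\[
B_\phi(\hat{X}\|\mathcal{F}(C))-\phi(\hat{X}) \;=\; \psi\!\left(\tfrac{u^C\oplus v^C-C}{\gamma}\right)-\!\left\langle\tfrac{u^C\oplus v^C-C}{\gamma},\,\hat{X}\right\rangle \;=\; E(u^C,v^C,C).
\]
Since, for fixed $C$, the function $(u,v)\mapsto E(u,v,C)$ coincides exactly with the dual objective $\Psi$ in~\eqref{eqn:breg_dual_ot_postive}, the right-hand side above equals $\min_{u,v\in\mathbb{R}^n}E(u,v,C)$. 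Taking $\inf_{C\in\mathcal{C}}$ on both sides and exchanging the two infima yields
\[
\inf_{C\in\mathcal{C}}B_\phi(\hat{X}\|\mathcal{F}(C)) \;-\; \phi(\hat{X}) \;=\; \inf_{C\in\mathcal{C},\,u,v\in\mathbb{R}^n}E(u,v,C),
\]
which is equality of the two infimum values up to the additive constant $\phi(\hat{X})$.

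The correspondence of minimizers then follows by applying this identity in both directions. If $C^*$ solves~\eqref{eqn:breg_inverse_ot}, picking any dual optimizer $(u^{C^*},v^{C^*})$ makes the triple $(u^{C^*},v^{C^*},C^*)$ attain the joint infimum above, hence solve~\eqref{eqn:breg_iot_equiv}. Conversely, if $(u^*,v^*,C^*)$ solves~\eqref{eqn:breg_iot_equiv}, then for the fixed $C^*$ the pair $(u^*,v^*)$ must minimize $E(\cdot,\cdot,C^*)$ and therefore serves as a dual optimizer, so combining with the identity shows that $C^*$ minimizes $B_\phi(\hat{X}\|\mathcal{F}(\cdot))$ over $\mathcal{C}$. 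The one point requiring care is the passage from infimum to minimum in $\min_{u,v}E(u,v,C)$, i.e.\ attainment of the dual optimizer for the relevant values of $C$; this is where $\phi_0^\prime=-\infty$ together with $\hat{X}\in\mathcal{T}_\phi(\mu,\nu)$ enters, via the standard Fenchel--Rockafellar duality for the forward regularized OT problem, which forces the dual infimum to be attained whenever the primal is solvable. Verifying this attainment carefully is the only substantive obstacle; all remaining steps are algebraic manipulations of the Fenchel identity.
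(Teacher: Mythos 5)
Your proposal is correct and follows essentially the same route as the paper: the chain of equalities in the paper's proof is exactly the Fenchel--Young identity $\phi(X^C)+\psi(\nabla\phi(X^C))=\langle X^C,\nabla\phi(X^C)\rangle$ applied at $X^C=\mathcal{F}(C)$ together with $\nabla\phi(X^C)=\frac{u^C\oplus v^C-C}{\gamma}$ and the marginal constraints on $\hat{X}$, yielding $B_\phi(\hat{X}\|\mathcal{F}(C))=\min_{u,v}E(u,v,C)+\phi(\hat{X})$ and hence the equivalence by taking the infimum over $C$. Your closing remark about verifying attainment of the dual optimizer is a point the paper passes over silently, but it does not change the argument.
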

\begin{proof}
    Let $(u^{C},v^{C})$ be the optimal solution to \eqref{eqn:breg_dual_ot} with cost $C$. Then by the KKT condition \eqref{eqn:KKT_bregman_positive}, we have $X^C = \nabla\psi(\frac{u^C\oplus v^C - C}{\gamma})$. Note that $\psi = \phi^*$ and $\hat{X}\in\mathcal{U}(\mu,\nu)$. Therefore
    \begin{align}
            B_\phi(\hat{X} \| X^C) &= \phi(\hat{X}) - \phi(X^C) - \langle \nabla\phi(X^C), \hat{X} - X^C\rangle \nonumber\\
            &= \phi(\hat{X}) - \phi(X^C) - \left\langle \frac{u^C\oplus v^C - C}{\gamma}, \hat{X} - X^C\right\rangle \nonumber\\
            &=\left\langle \frac{u^C\oplus v^C - C}{\gamma},  X^C \right\rangle - \phi(X^C) - \frac{1}{\gamma}\langle u^C\oplus v^C, \hat{X} \rangle + \frac{1}{\gamma}\langle C, \hat{X} \rangle + \phi(\hat{X}) \nonumber\\
            &= \left\langle \frac{u^C\oplus v^C - C}{\gamma},  X^C \right\rangle - \phi(X^C) - \frac{1}{\gamma}\langle u^C, \mu \rangle - \frac{1}{\gamma}\langle v^C, \nu \rangle + \frac{1}{\gamma}\langle C, \hat{X} \rangle + \phi(\hat{X})  \nonumber \\
            &= \psi\left(\frac{u^C\oplus v^C - C}{\gamma}\right) - \left\langle \frac{u^C\oplus v^C - C}{\gamma} ,\hat{X}\right\rangle + \phi(\hat{X}) \nonumber \\
            &= F\left(\frac{u^C\oplus v^C - C}{\gamma}\right)+\phi(\hat{X}).\label{eqn:equivalence}
        \end{align} 
    The optimality of $u^C,v^C$ yields:
    \[
    F\left(\frac{u^C\oplus v^C - C}{\gamma}\right) = \min_{u,v}F\left(\frac{u\oplus v - C}{\gamma}\right).
    \]
    Thus, we have shown
    \[
    B_\phi(\hat{X} \| X^C) = \frac{1}{\gamma}\min_{u,v}F\left(\frac{u\oplus v - C}{\gamma}\right)+\phi(\hat{X}).
    \]
    We can now conclude that the two problems  \eqref{eqn:breg_inverse_ot} and \eqref{eqn:breg_iot_equiv} are equivalent and share the same solution $C^*$.
\end{proof}

The condition $\phi^\prime_0 = -\infty$ plays a central role in the proof of \Cref{prop:equivalence} since it ensures that the KKT condition \eqref{eqn:KKT_bregman} can be reduced to $X^C = \nabla\psi(\frac{u^C\oplus v^C-C}{\gamma})$, i.e., $\nabla\phi(X^C) = \frac{u^C\oplus v^C-C}{\gamma}$, which leads to~\eqref{eqn:equivalence}. This requirement is met by the entropy function, but not by a regularizer like the quadratic function $\phi(x) = \frac{1}{2}x^2$. 

For a general closed convex set $\mathcal{C}$, the existence of the optimal solutions to \eqref{eqn:breg_inverse_ot} and \eqref{eqn:breg_iot_equiv} is unknown. However, if $\mathcal{C} = S_h$, we have the following existence result:
\begin{prop}\label{prop:equi_existence}
    Suppose $\phi:I\rightarrow\mathbb{R}$ is a proper closed convex function and $\hat{X}_{ij}\in\operatorname{int}(\operatorname{dom}\phi), \forall i,j$. Set $\mathcal{C} = S_h$. Then the infimum of \eqref{eqn:breg_iot_equiv} is attainable. Suppose $(u^*,v^*,C^*)$ is one solution to \eqref{eqn:breg_iot_equiv}. If we further assume that the Fenchel conjugate $\psi = \phi^*$ is twice differentiable and strictly convex at  $\frac{u^*\oplus v^*-C^*}{\gamma}$, i.e. $\psi^{\prime\prime}(\frac{u^*\oplus v^*-C^*}{\gamma}) > 0$, then the optimal solution $C^*\in S_h$ is unique. 
\end{prop}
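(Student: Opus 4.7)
The plan is a coercivity-plus-lower-semicontinuity argument for existence, after quotienting out the only invariance of $E$ on $\mathbb{R}^n\times\mathbb{R}^n\times S_h$, and a second-order Taylor argument for uniqueness. For existence I would rewrite $E(u,v,C)=F(Z)$ with $Z:=(u\oplus v-C)/\gamma$ and $F(Z):=\psi(Z)-\langle Z,\hat X\rangle$, which is proper, closed, convex, and bounded below by $-\phi(\hat X)$ via Fenchel--Young. Because $\hat X_{ij}\in\operatorname{int}(\operatorname{dom}\phi)$ entrywise and $\phi=\psi^*$, a standard recession-function calculation (the recession of $\psi$ is the support function of $\operatorname{dom}\phi$) yields that $F$ is coercive on $\mathbb{R}^{n\times n}$. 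Next, I would identify the invariance of $E$ on the constraint set: any shift $(u,v,C)\to(u+\alpha,v+\beta,C+\alpha\oplus\beta)$ preserving $u\oplus v-C$ must satisfy $C+\alpha\oplus\beta\in S_h$, and the zero-diagonal plus symmetry conditions force $\beta=-\alpha$ and $\alpha\in\mathbb{R}\mathbf{1}$, so $\alpha\oplus\beta=0$; the only invariance is therefore $(u,v,C)\mapsto(u+t\mathbf{1},v-t\mathbf{1},C)$. After normalizing by $\sum_i(u_i-v_i)=0$, the affine map $(u,v,C)\mapsto Z$ on the normalized slice is injective, with inverse formulas $u_i+v_i=\gamma Z_{ii}$, $u-v$ recovered from $Z-Z^\top$ together with the normalization, and $C_{ij}=\gamma[(Z_{ii}+Z_{jj})-(Z_{ij}+Z_{ji})]/2$ for $i\neq j$.

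With the slice set up, I would take a minimizing sequence $(u_k,v_k,C_k)$ in the normalized slice; coercivity of $F$ forces $\{Z_k\}$ to be bounded, and the inverse formulas transfer this to boundedness of $\{(u_k,v_k,C_k)\}$. A convergent subsequence has limit $(u^*,v^*,C^*)\in\mathbb{R}^n\times\mathbb{R}^n\times S_h$ (closedness of $S_h$), and lower semicontinuity of $F$ delivers $E(u^*,v^*,C^*)\leq\liminf_k E(u_k,v_k,C_k)=\inf E$, so the limit is a minimizer. Finiteness of $\inf E$ is a side check: $\operatorname{dom}\psi\neq\emptyset$ since $\partial\phi(\hat X_{11})\subseteq\operatorname{dom}\psi$, and for any $z_0\in\operatorname{dom}\psi$ the feasible triple $(\mathbf{0},\gamma z_0\mathbf{1},\mathbf{0})$ produces $Z=z_0\mathbf{1}\mathbf{1}^\top$ with $E=n^2\psi(z_0)-z_0<\infty$.

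For uniqueness, let $(u^{**},v^{**},C^{**})$ be a second minimizer, and set $Z^*:=(u^*\oplus v^*-C^*)/\gamma$ and $Z^{**}:=(u^{**}\oplus v^{**}-C^{**})/\gamma$. Convexity of $E$ and of the feasible set make $Z_t:=(1-t)Z^*+tZ^{**}$ the image of a family of minimizers, so $g(t):=F(Z_t)$ is convex on $[0,1]$ with $g(0)=g(1)=\inf E$, which forces $g\equiv\inf E$. Using twice differentiability of $\psi$ at $Z^*$, the entrywise Taylor expansion gives
\[
g(t)-g(0)=t\,g'(0)+\tfrac{t^2}{2}\sum_{ij}\psi''(Z^*_{ij})(Z^{**}_{ij}-Z^*_{ij})^2+o(t^2),
\]
and matching $g\equiv g(0)$ with the hypothesis $\psi''(Z^*_{ij})>0$ forces $Z^{**}=Z^*$ entrywise. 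Writing $a:=u^{**}-u^*$ and $b:=v^{**}-v^*$ then yields $a\oplus b=C^{**}-C^*\in S_h-S_h$; zero diagonal of the difference forces $b=-a$, symmetry then forces $a\in\mathbb{R}\mathbf{1}$, whence $a\oplus b=0$ and $C^{**}=C^*$.

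The main technical obstacle I expect is bookkeeping around the invariance: one must correctly identify the one-dimensional kernel of $(u,v,C)\mapsto u\oplus v-C$ on $\mathbb{R}^n\times\mathbb{R}^n\times S_h$, pick a normalization that kills it, and check that coercivity of $F$ in $Z$ transfers through the explicit inverse formulas to boundedness of the normalized triples $(u_k,v_k,C_k)$. The remaining ingredients --- the Fenchel--Young lower bound, the recession-coercivity computation driven by $\hat X\in\operatorname{int}(\operatorname{dom}\phi)$, and the second-order Taylor step for uniqueness --- are each fairly standard once the slice has been set up correctly.
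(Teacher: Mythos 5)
Your proposal is correct and follows essentially the same route as the paper: both arguments quotient out the one-dimensional shift invariance $(u,v)\mapsto(u+t\mathbf{1},v-t\mathbf{1})$ by a linear normalization, bound a minimizing sequence by combining the bounded level sets of $y\mapsto\psi(y)-y\hat{X}_{ij}$ (valid because $\hat{X}_{ij}\in\operatorname{int}(\operatorname{dom}\phi)$) with the zero-diagonal and symmetry structure of $S_h$ to recover $u,v,C$ from $Z$, and prove uniqueness by showing that the quadratic form $\sum_{ij}\psi''(Z^*_{ij})(\Delta u_i+\Delta v_j-\Delta C_{ij})^2$ vanishes only on the trivial direction of the normalized slice. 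Your segment-and-Taylor formulation of the uniqueness step is a slightly more careful way of using the pointwise hypothesis $\psi''(Z^*)>0$ than the paper's appeal to strict convexity on all of $\mathcal{D}$, but the underlying algebra is identical.
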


\begin{proof}
    
Note that for any scalar $a \in \mathbb{R}$, $E( u+a\mathbf{1}, v-a\mathbf{1},C) = E(u, v ,C)$, where $\mathbf{1}$ represents the vector with all one. Let $V = \{ (u, v) \in \mathbb{R}^n \times \mathbb{R}^n \mid u_n = 0 \}$. The set $V$ is a closed subspace of $\mathbb{R}^{2n}$. By enforcing $u_n=0$, we eliminate the shifting invariance of the objective function $E(u,v,C)$ since in this case, if both $(u+a\mathbf{1}, v-a\mathbf{1})$ and $(u,v)$ lie in $V$, one must have the scalar $a$ to be 0. By eliminating the shifting invariance, the problem is equivalent to finding $\inf_{(u, v ,C) \in \mathcal{C} \times V} E(u, v ,C)$ and the domain $\mathcal{D} = \mathcal{C} \times V$ is a closed set. \smallskip


\emph{Existence:} Let $E^* = \inf_{(u, v, C) \in \mathcal{D}} E(u, v, C)$. The convexity of $\psi$ implies that $E(u, v, C)$ is bounded below, and therefore $E^*$ is finite. Since $\phi$ is a proper closed convex function, we have $\psi = \phi^*$, which is continuous; thus, $E(u, v, C)$ is also continuous.

Let $(u^k, v^k, C^k)$ be a minimizing sequence in $\mathcal{D}$, meaning $(u^k, v^k, C^k) \in \mathcal{C} \times V$ and $E(u^k, v^k, C^k) \to E^*$. Since $E^*$ is finite, the sequence $E(u^k, v^k, C^k)$ is bounded. Denote $y_{ij}^k = (u_i^k + v_j^k - C_{ij}^k)/\gamma$. Note that the function $h(y) = \psi(y) - y X$ has a bounded level set if $X \in \operatorname{int}\operatorname{dom}(\phi)$ \cite[Corollary~14.2.2]{rockafellar1997convex}. Since $E(u^k, v^k, C^k)$ is bounded, each term $h(y_{ij}^k)$ must also be bounded. By assumption, $\hat{X}_{ij} \in \operatorname{int}\operatorname{dom}(\phi)$, so there exists $M > 0$ such that $|y_{ij}^k| \leq M$ for all $i, j, k$. Let $Y_{ij}^k = \gamma y_{ij}^k = u_i^k + v_j^k - C_{ij}^k$. 

The matrix $C^k$ satisfies $C^k \in \mathcal{C}$, which gives $C_{ij}^k \geq 0$, $C_{ii}^k = 0$, and $C_{ij}^k = C_{ji}^k$. 
First, $C_{ii}^k = 0 \implies u_i^k + v_i^k = Y_{ii}^k$. Hence, $u_i^k + v_i^k$ is bounded. 
Second, $C_{ij}^k = C_{ji}^k \implies (u_i^k - v_i^k) - (u_j^k - v_j^k) = Y_{ij}^k - Y_{ji}^k$. Denote $d_i^k = u_i^k - v_i^k$, which implies $d_i^k - d_j^k$ is bounded. By choosing $u_n^k = 0$, we deduce that $v_n^k = Y_{nn}^k - u_n^k = Y_{nn}^k$ and $d_n^k = -v_n^k$ are both bounded. Consequently, $d_i^k = d_n^k + (Y_{in}^k - Y_{ni}^k)$ is bounded for all $i$. Therefore, $2u_i^k = (u_i^k + v_i^k) + (u_i^k - v_i^k) = Y_{ii}^k + d_i^k$ is bounded. Thus, $u^k$ is bounded, and similarly, $v^k$ is bounded. Finally, the boundedness of $C^k$ follows from $C_{ij}^k = u_i^k + v_j^k - Y_{ij}^k$. 

To summarize, any minimizing sequence $(u^k, v^k, C^k)$ in $\mathcal{D} = \mathcal{C} \times V$ is bounded. Thus, there exists a subsequence $(u^{k_m}, v^{k_m}, C^{k_m})$ that converges to a limit $(u^*, v^*, C^*) \in \mathcal{D}$ (since $\mathcal{D}$ is closed). By the continuity of $E$, we have $E(u^{k_m}, v^{k_m}, C^{k_m}) \to E(u^*, v^*, C^*)$. Since $E(u^k, v^k, C^k) \to E^*$, it follows that $E(u^*, v^*, C^*) = E^*$. Consequently, the infimum is attained at $(u^*, v^*, C^*)$. \smallskip

\emph{Uniqueness:} Let $z = (u, v ,C)$ and $y_{ij}(z) = (u_i+v_j-C_{ij})/\gamma$. Let $d = (\Delta u, \Delta v, \Delta C)$ be a direction vector. The second-order term in the Taylor expansion of $E$ around $z$ is given by the quadratic form:
$$ Q(d) = d^\top \nabla^2 L(z) d = \sum_{i,j} \frac{1}{\gamma^2} (\psi)''(y_{ij}(z)) (\Delta u_i + \Delta v_j - \Delta C_{ij})^2. $$
For $E$ to be strictly convex, we need $Q(d) > 0$ for all $d \ne 0$ in the tangent space of $\mathcal{D}$. By assumption $\psi^{\prime\prime}(\frac{u^*\oplus v^*-C^*}{\gamma}) > 0$, we only need to show the term $(\Delta u_i + \Delta v_j - \Delta C_{ij})$ must not be zero for all $i,j$ if $d \ne 0$.

Let $d = (\Delta C, \Delta u, \Delta v)$ be a direction in the tangent space of $\mathcal{D}$ where $\Delta C$ must satisfy the constraints for $S_h$, and $\Delta u_n = 0$. First, we have $\Delta u_i + \Delta v_i = \Delta C_{ii} = 0$. Second, by symmetry $\Delta C_{ij} = \Delta C_{ji}$, we obtain $\Delta u_i + \Delta v_j = \Delta u_j + \Delta v_i$, which means $\Delta u_i - \Delta v_i = \Delta u_j - \Delta v_j, \forall i,j$. Denote $B=\Delta u_1 - \Delta v_1$. Obviously $\Delta u_i - \Delta v_i = B$ hold for all $i$. From $\Delta u_i + \Delta v_i = 0$ and $\Delta u_i - \Delta v_i = B$, we get $2 \Delta u_i = B$, so $\Delta u_i = B/2$ and $\Delta v_i = -B/2$. This means that $\Delta u = (B/2) \mathbf{1}$ and $\Delta v = (-B/2) \mathbf{1}$. 
But the condition for $V$ is $\Delta u_n = 0$. This implies $B/2 = 0$, so $B=0$. Thus, $\Delta u = 0$ and $\Delta v = 0$. Then $\Delta C_{ij} = \Delta u_i + \Delta v_j =  0$. So $\Delta C = 0$. Therefore, the only direction $d$ for which $Q(d)=0$ is $d=(0, 0, 0)$. This means that $E(u, v ,C)$ is strictly convex on $\mathcal{D}$ and therefore the minimizer $(u^*, v^* ,C^*)$ must be unique.
\end{proof}


\begin{rmk}
    When the regularizer is chosen as the Boltzmann-Shannon entropy function, the minimizers of both \eqref{eqn:breg_inverse_ot} and \eqref{eqn:breg_iot_equiv} exist and coincide. The minimizer is also unique in the setting of $\mathcal{C} = S_h$, which has been discussed in \cite{ma2020learning}. 
\end{rmk}


Having established the existence and uniqueness of the solution to the optimization problem~\eqref{eqn:breg_iot_equiv}, we now shift our focus to solving it algorithmically. Given the structure of the problem, the Block Coordinate Descent (BCD) method emerges as a natural choice. However, a key assumption in standard BCD convergence proofs is the boundedness of the level sets of the objective function, which guarantees that the sequence of iterates generated by the algorithm has at least one limit point.

In this case, the objective function $E(u, v, C)$ exhibits ``flat'' directions due to the shifting invariance in the dual variables $(u, v)$, leading to unbounded level sets. As a result, an iterative method such as BCD may produce sequences $\{(u_k, v_k, C_k)\}$ where $\|u_k\|, \|v_k\| \to \infty$, while $E(u_k, v_k, C_k)$ converges. To mitigate this issue and ensure that the BCD iterates remain within a compact region, we introduce an additional strongly convex regularization term into the optimization formulation. This leads to the slightly perturbed problem:
\begin{equation}\label{eqn:breg_iot_equiv_reg}
        \min_{u,v\in\mathbb{R}^n,C\in\mathcal{C}} E_{\lambda}(u,v,C):= E(u,v,C) + \lambda(R_1(u)+R_2(v)+R_3(C)),
\end{equation}
where $\lambda > 0$ and $R_1,R_2,R_3$ are proper continuous strongly convex functions. Denote \[S = \{(u^*,v^*,C^*): (u^*,v^*,C^*) \in \arg\min E(u,v,C)\}\] and suppose $S\neq\emptyset$. The strong convexity of $R_1,R_2,R_3$ implies the strong convexity of \eqref{eqn:breg_iot_equiv_reg} and thus ensures the existence and uniqueness of the optimal solution to \eqref{eqn:breg_iot_equiv_reg}. Let $(u^*_\lambda,v^*_\lambda,C^*_\lambda)$ be the optimal solution of $E_\lambda(u,v,C)$. Denote \[H(u,v,C) = R_1(u)+R_2(v)+R_3(C)\]. We first show that the optimal value of \eqref{eqn:breg_iot_equiv_reg} converges to the optimal value of \eqref{eqn:breg_iot_equiv}.

\begin{prop}
    Suppose $S\neq\emptyset$ and $H(u,v,C)$ is nonnegative. Denote $E^*$ the minimum of $E(u,v,C)$ and $E^*_\lambda$ the minimum of $E_{\lambda}(u,v,C)$. Then $\lim\limits_{\lambda\rightarrow 0} E_\lambda^* = E^*$. 
    Furthermore, if  we assume there exists $\epsilon > 0$ such that $\{(u^*_\lambda,v^*_\lambda,C^*_\lambda)\}|_{\lambda<\epsilon}$ is bounded, then we have 
    \[
    (u^*_\lambda,v^*_\lambda,C^*_\lambda) \stackrel{\lambda\rightarrow0}{\longrightarrow} \arg\min\{ H(u,v,C): (u,v,C) \in S \}.
    \]
\end{prop}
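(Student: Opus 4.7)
The plan is to adapt the classical Tikhonov penalty convergence argument. Its engine is a single inequality: for any reference point $(\bar u, \bar v, \bar C) \in S$ (which exists since $S \neq \emptyset$), the optimality of $(u^*_\lambda, v^*_\lambda, C^*_\lambda)$ for $E_\lambda$ yields
\[
E(u^*_\lambda, v^*_\lambda, C^*_\lambda) + \lambda H(u^*_\lambda, v^*_\lambda, C^*_\lambda) \;\leq\; E^* + \lambda H(\bar u, \bar v, \bar C).
\]
Combined with $E(u^*_\lambda, v^*_\lambda, C^*_\lambda) \geq E^*$, this gives $H(u^*_\lambda, v^*_\lambda, C^*_\lambda) \leq H(\bar u, \bar v, \bar C)$, so the regularized minimizers stay in a single sublevel set of $H$. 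Since $H = R_1 + R_2 + R_3$ is proper, continuous, and strongly convex on a finite-dimensional Euclidean space, it is coercive; this sublevel set is therefore bounded, and the whole family $\{(u^*_\lambda, v^*_\lambda, C^*_\lambda)\}_{\lambda > 0}$ is uniformly bounded.

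For the value convergence, the same inequality already yields $E^*_\lambda \leq E^* + \lambda H(\bar u, \bar v, \bar C)$, while the lower bound $E^*_\lambda \geq E^* + \lambda \inf H$, with $\inf H$ finite by strong convexity, squeezes $E^*_\lambda$ to $E^*$ as $\lambda \to 0^+$. For the argmin convergence, I would take any sequence $\lambda_k \to 0^+$ and, using the boundedness above and the closedness of $\mathbb{R}^n \times \mathbb{R}^n \times \mathcal{C}$, pass to a convergent subsequence with limit $(\hat u, \hat v, \hat C)$. Continuity of $E$ and the sandwich $E^* \leq E(u^*_{\lambda_k}, v^*_{\lambda_k}, C^*_{\lambda_k}) \leq E^* + \lambda_k H(\bar u, \bar v, \bar C)$ force $E(\hat u, \hat v, \hat C) = E^*$, so $(\hat u, \hat v, \hat C) \in S$. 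Repeating the optimality comparison with an \emph{arbitrary} $(u', v', C') \in S$ in place of $(\bar u, \bar v, \bar C)$ gives $H(u^*_{\lambda_k}, v^*_{\lambda_k}, C^*_{\lambda_k}) \leq H(u', v', C')$, and continuity of $H$ upgrades this to $H(\hat u, \hat v, \hat C) \leq H(u', v', C')$, so $(\hat u, \hat v, \hat C)$ is a minimizer of $H$ over $S$.

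Since $H$ is strongly convex and $S$ is closed and convex (as the solution set of a lower-semicontinuous convex problem on the closed convex feasible set $\mathbb{R}^n \times \mathbb{R}^n \times \mathcal{C}$), the minimizer of $H$ over $S$ is unique; hence every accumulation point of $\{(u^*_\lambda, v^*_\lambda, C^*_\lambda)\}$ coincides with this unique minimizer, and the uniform boundedness of the family promotes subsequential convergence to full convergence. The one subtlety I anticipate is that $S$ itself is typically unbounded, due to the $(u,v)$ shift invariance of $E$ that motivated the regularization in the first place, so the boundedness of the regularized family must come from coercivity of $H$ rather than any property of $E$ or $S$ — but once this observation is in place, the rest of the argument is a chain of routine comparisons and limit passages.
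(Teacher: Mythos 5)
Your proposal is correct and follows essentially the same route as the paper: the same two-sided optimality comparison $E(z^*_\lambda)+\lambda H(z^*_\lambda)\leq E^*+\lambda H(\bar z)$, the same squeeze for value convergence, and the same subsequence-plus-uniqueness argument (strong convexity of $H$ on the closed convex set $S$) to upgrade accumulation points to full convergence. The one place you go beyond the paper is in justifying \emph{why} a convergent subsequence exists: the paper extracts one without comment, whereas you correctly observe that $S$ is unbounded (shift invariance) and derive boundedness of the family $\{(u^*_\lambda,v^*_\lambda,C^*_\lambda)\}$ from $H(z^*_\lambda)\leq H(\bar z)$ together with coercivity of the strongly convex $H$ --- a small but genuine gap in the paper's write-up that your argument closes, and likewise your lower bound $E^*_\lambda\geq E^*+\lambda\inf H$ avoids the paper's implicit assumption that $H\geq 0$.
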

\begin{proof}
Choose $(u^*,v^*,C^*)\in S$. By definition we have
        \begin{align}\label{eqn:0}
        E(u^*_\lambda,v^*_\lambda,C^*_\lambda) +\lambda H(u^*_\lambda,v^*_\lambda,C^*_\lambda) 
        \leq E(u^*,v^*,C^*) +\lambda H(u^*,v^*,C^*) 
        \leq E(u^*_\lambda,v^*_\lambda,C^*_\lambda) +\lambda H(u^*,v^*,C^*).
        \end{align}
    Set $\kappa = \inf_{(u^*,v^*,C^*)\in S}H(u^*,v^*,C^*)$. We have
    \[
    0\leq E^*_\lambda - E^* \leq \lambda\kappa.
    \]
    Let $\lambda\rightarrow0$ and then $\lim_{\lambda\rightarrow 0} E_\lambda^* = E^*$. The boundedness of  $\{(u^*_\lambda,v^*_\lambda,C^*_\lambda)\}|_{\lambda<\epsilon}$ guarantees the existence of a converging subsequence. We now extract a converging subsequence $(u^*_{\lambda_k},v^*_{\lambda_k},C^*_{\lambda_k})$ of $(u^*_\lambda,v^*_\lambda,C^*_\lambda)$ with $\lambda_k\rightarrow0$ that converges to a point $(u^*_0,v^*_0,C^*_0)$.  Substitute $\lambda_k$ for $\lambda$ in \eqref{eqn:0} and let $k\rightarrow\infty$. The continuity of $E(u,v,C)$ yields $E(u^*_0,v^*_0,C^*_0) \leq E^*$, indicating $(u^*_0,v^*_0,C^*_0) \in S$. On the other hand, by \eqref{eqn:0} we have 
    \[
    H(u^*_{\lambda_k},v^*_{\lambda_k},C^*_{\lambda_k}) \leq \inf_{(u^*,v^*,C^*)\in S}H(u^*,v^*,C^*).
    \]
    Let $k\rightarrow\infty$ and the continuity of $H(u,v,C)$ yields $H(u^*_0,v^*_0,C^*_0) \leq \inf_{(u^*,v^*,C^*)\in S}H(u^*,v^*,C^*)$. Hence,  $(u^*_0,v^*_0,C^*_0)\in \arg\min\{ H(u,v,C): (u,v,C) \in S \}$. The strong convexity of $H$ and the convexity of $S$ imply the uniqueness of the solution to $\min\{ H(u,v,C): (u,v,C) \in S \}$. We therefore have shown that all the convergent subsequences of $(u^*_\lambda,v^*_\lambda,C^*_\lambda)$ converge to the unique solution of $\min\{ H(u,v,C): (u,v,C) \in S \}$ and thus the whole sequence converges to it.
\end{proof}

\subsection{Block Coordinate Descent for Inverse Bregman-Regularized OT}\label{sec:bcd}

Inspired by the work of \cite{ma2020learning} and the Sinkhorn algorithm \cite{cuturi2013sinkhorn}, we introduce the alternating minimization algorithm in \Cref{alg:AM} to solve \eqref{eqn:breg_iot_equiv_reg}. The Sinkhorn algorithm can be seen as an alternating minimization method applied to the dual problem of the entropy-regularized optimal transport problem.
\begin{algorithm}
\caption{Alternating Minimization Algorithm for \eqref{eqn:breg_iot_equiv_reg}}\label{alg:AM}
    \begin{algorithmic}
        \REQUIRE Initial point $u^0,v^0,C^0$, observed transport plan $\hat{X}$, regularization parameter $\gamma$ and $\lambda$.
        \REPEAT
        \STATE $u^k:= \arg\min_u E(u,v^{k-1},C^{k-1}) + \lambda R_1(u)$.
        \STATE $v^k:= \arg\min_v E(u^{k},v,C^{k-1}) + \lambda R_2(v)$.
        \STATE $C^k:= \arg\min_{C\in\mathcal{C}} E(u^k,v^{k},C) + \lambda R_3(C)$.
        \UNTIL{convergence}
        \STATE $C^* \leftarrow C^k$.
    \end{algorithmic}
\end{algorithm}

We present our main convergence result for \Cref{alg:AM}.
\begin{thm}
    Assume $R_1,R_2,R_3$ are continuously differentiable and strongly convex and suppose \Cref{asmp:breg} holds. Given any initial point $(u^0,v^0,C^0)$, let $\{(u^k,v^k,C^k)\}$ be generated by \Cref{alg:AM}. Then $\{E_\lambda(u^k,v^k,C^k)\}$ converges to $E_\lambda^*$ Q-linearly and $\{(u^k,v^k,C^k)\}$ converges to $(u^*_\lambda,v^*_\lambda,C^*_\lambda)$ R-linearly.
\end{thm}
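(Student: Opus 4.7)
The plan is to verify the standard hypotheses under which exact cyclic block coordinate descent on a strongly convex, differentiable objective enjoys linear convergence, and then to translate Q-linear decay of the objective into R-linear decay of the iterates via strong convexity. Concretely, I would show (i) $E_\lambda$ is jointly strongly convex and coercive, so its sublevel sets are compact and the iterates remain in a compact set; (ii) on this compact set, $\nabla E_\lambda$ is block-Lipschitz; and (iii) the resulting linear rate follows from a classical BCD convergence theorem.

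The first step is to establish joint strong convexity of $E_\lambda$ on $\mathbb{R}^n \times \mathbb{R}^n \times \mathcal{C}$. Since $\psi = \phi^*$ is convex, the map $(u,v,C) \mapsto \psi((u \oplus v - C)/\gamma) - \langle (u \oplus v - C)/\gamma, \hat X \rangle$ is the composition of a convex function with an affine map (plus an affine term), hence convex. Adding $\lambda(R_1(u) + R_2(v) + R_3(C))$ then gives joint $\sigma$-strong convexity with modulus $\sigma = \lambda \min\{\sigma_1, \sigma_2, \sigma_3\} > 0$, where $\sigma_i$ denotes the strong-convexity modulus of $R_i$. Strong convexity implies coercivity, so the sublevel set $\mathcal{L}_0 := \{(u,v,C) \in \mathbb{R}^n \times \mathbb{R}^n \times \mathcal{C} : E_\lambda(u,v,C) \leq E_\lambda(u^0,v^0,C^0)\}$ is bounded (indeed compact, since $\mathcal{C}$ is closed). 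Each subproblem in Algorithm \ref{alg:AM} minimizes a strongly convex function of a single block, so its minimizer exists and is unique, the algorithm is well-defined, the sequence $\{E_\lambda(u^k,v^k,C^k)\}$ is non-increasing, and every iterate stays in $\mathcal{L}_0$.

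Next, on the compact set $\mathcal{L}_0$ the affine image $(u \oplus v - C)/\gamma$ is contained in a compact subset of $\operatorname{int}(\operatorname{dom} \psi)$. Using the twice continuous differentiability of $\psi$ on this set (which is valid for every regularizer in Table~\ref{tab:bregman_func}), $\nabla \psi$ is Lipschitz there; combined with the $C^1$ regularizers $R_1, R_2, R_3$ having Lipschitz gradients on the bounded projections of $\mathcal{L}_0$, one concludes that $\nabla E_\lambda$ is block-wise Lipschitz on $\mathcal{L}_0$. Applying a classical linear-convergence theorem for exact cyclic BCD on a strongly convex function with block-Lipschitz gradient then yields
\[
E_\lambda(u^{k+1}, v^{k+1}, C^{k+1}) - E_\lambda^* \leq q\bigl(E_\lambda(u^k, v^k, C^k) - E_\lambda^*\bigr)
\]
for some $q \in (0,1)$ depending on $\sigma$ and the block Lipschitz constants. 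Finally, $\sigma$-strong convexity at the unique minimizer gives
\[
\tfrac{\sigma}{2}\,\|(u^k,v^k,C^k) - (u_\lambda^*, v_\lambda^*, C_\lambda^*)\|^2 \leq E_\lambda(u^k,v^k,C^k) - E_\lambda^*,
\]
so the Q-linear rate on the objective directly implies an R-linear rate on the iterates.

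The main obstacle I anticipate is making the Lipschitz argument fully rigorous: Assumption~\ref{asmp:breg} only demands $\phi \in C^1$, whereas the block-Lipschitz condition on $\nabla E$ really requires a second-order bound on $\psi$ on the compact set swept out by the iterates. Either this must be assumed explicitly (consistent with every example in Table~\ref{tab:bregman_func}, where $\psi$ is smooth on the interior of its domain) or obtained from additional structural hypotheses on $\phi$. A secondary subtlety is handling the convex constraint $C \in \mathcal{C}$ in the third block update, but this is routine for constrained BCD as long as $\mathcal{C}$ is closed and convex, which is already assumed.
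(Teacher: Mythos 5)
Your proposal follows essentially the same route as the paper, whose proof is a one-line appeal to the linear-convergence result for block coordinate descent on strongly convex objectives (Proposition 3.4 of Luo and Tseng's error-bound framework); your verification of joint strong convexity, compactness of the sublevel set, and block-Lipschitz gradients is precisely the hypothesis-checking that citation presupposes, and your passage from Q-linear decay of $E_\lambda$ to R-linear convergence of the iterates via the strong-convexity inequality is the standard final step. The regularity gap you flag is genuine --- \Cref{asmp:breg} only gives $\phi\in C^1$, whereas the Lipschitz bound on $\nabla E_\lambda$ needs second-order control of $\psi$ on the compact set swept out by the iterates --- but the paper's own omitted proof leaves this equally unaddressed, deferring the $C^2$ hypothesis to \Cref{asmp:IBCD} for the inexact variant only.
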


\begin{proof}
The proof is almost identical to \cite[Proposition 3.4] {luo1993error} by the strong convexity of $E_\lambda(u,v,C)$ and we omit the details here.
\end{proof}

At present, we have introduced an Alternating Minimization (AM) algorithm (\Cref{alg:AM}), a form of Block Coordinate Descent (BCD), for minimizing the objective function $E_\lambda(z)$ and have established its R-linear convergence rate in theory.

While theoretically appealing, the practical implementation of the exact AM algorithm encounters a significant challenge: solving the block subproblems exactly at each iteration can be computationally expensive or even infeasible, as the block subproblems typically do not admit closed-form solutions.

To develop a more practical and computationally efficient approach, we propose an Inexact Block Coordinate Descent (IBCD) algorithm. This method preserves the beneficial cyclic update structure of the AM algorithm but replaces the exact minimization within each block with a computationally cheaper step that still guarantees sufficient progress. 

\subsubsection{Inexact BCD with Newton and Projected Gradient Steps}

\begin{algorithm}
\caption{Inexact Block Coordinate Descent Algorithm}\label{alg:IBCD}
\begin{algorithmic}[1]
\REQUIRE Initial point $z^0 = (u^0, v^0, C^0) \in \mathbb{R}^n \times \mathbb{R}^n \times \mathcal{C}$. Parameters $\gamma, \lambda > 0$. Line search parameters $\beta \in (0, 1)$, $c_1 \in (0, 1/2)$. Stopping criterion $\epsilon > 0$.
\STATE Set $k=0$.
\REPEAT
    \STATE \textbf{Update u-block:}
    \STATE 1a. Compute gradient $g_u^k = \nabla_u E_\lambda(u^k, v^k, C^k)$  and Hessian $H_u^k = \nabla^2_{uu} E_\lambda(u^k, v^k, C^k)$. 
    \STATE  1b. Compute Newton direction $d_u^k = -(H_u^k)^{-1} g_u^k$.
    \STATE  1c. Find step size $\alpha_k^u$: Start with $\alpha = 1$. \WHILE{$E_\lambda(u^k + \alpha d_u^k, v^k, C^k) > E_\lambda(u^k, v^k, C^k) + c_1 \alpha (g_u^k)^T d_u^k$}
        \STATE Set $\alpha \leftarrow \beta \alpha$.
    \ENDWHILE
    \STATE 1d.  Set $\alpha_k^u = \alpha$ and let $u^{k+1} = u^k + \alpha_k^u d_u^k$.

    \STATE \textbf{Update v-block similar to  u-block and obtain $v^{k+1}$:}

    \STATE \textbf{Update C-block:}
    \STATE 3a. Compute gradient $g_C^k = \nabla_C E_\lambda(u^{k+1}, v^{k+1}, C^k)$.
    \STATE 3b.  Let $d_C^k(s) = P_{\mathcal{C}}(C^k - s g_C^k) - C^k$ be the projected gradient direction for step size $s$.
    \STATE 3c. Find step size $\alpha_k^C$: Start with $\alpha = 1$. Let $C_{\text{trial}}(\alpha) = P_{\mathcal{C}}(C^k - \alpha  g_C^k)$.
    \WHILE{$E_\lambda(u^{k+1}, v^{k+1}, C_{\text{trial}}(\alpha)) > E_\lambda(u^{k+1}, v^{k+1}, C^k) + c_1 (g_C^k)^T (C_{\text{trial}}(\alpha) - C^k)$}
        \STATE Set $\alpha \leftarrow \beta \alpha$.
    \ENDWHILE
    \STATE 3d. Set $\alpha_k^C = \alpha$ and let $C^{k+1} = P_{\mathcal{C}}(C^k - \alpha_k^C g_C^k)$.

    \STATE $k \leftarrow k+1$.
\UNTIL{stopping criterion met}
\end{algorithmic}
\end{algorithm}


The detailed steps of our proposed inexact algorithm are presented in \Cref{alg:IBCD}. It follows the same cyclic update pattern as the exact AM method but utilizes specific inexact solvers for each block.

\Cref{alg:IBCD} preserves the overall structure of \Cref{alg:AM} (the exact AM method) while modifying the core block updates. Instead of solving for the exact minimum within each block, the algorithm performs a single, computationally bounded step designed to ensure progress. For the $u$ and $v$ blocks, a damped Newton step is employed, incorporating second-order information about the subproblem. To guarantee sufficient decrease in the overall objective function $E_\lambda$, the Armijo line search is used. For the $C$ block, a projected gradient step is performed, efficiently enforcing the constraint $C \in \mathcal{C}$ via the projection $P_{\mathcal{C}}$. 

The key feature of \Cref{alg:IBCD} is that each block update requires only a fixed, bounded amount of computation, avoiding indefinite inner iterations. This ensures that the overall cost per iteration is predictable and often significantly lower than that of the exact AM method. The critical question, which is addressed in the next section, is whether this practical inexact approach retains the guaranteed linear convergence rate of its exact counterpart.

\subsubsection{Convergence Analysis of Inexact Block Coordinate Descent Algorithm}

We establish the convergence properties of \Cref{alg:IBCD} under \Cref{asmp:IBCD}. Note that \Cref{asmp:IBCD} implies the compactness of the sublevel set of $E_\lambda(u,v,C)$ and the Lipschitz continuity of $\nabla E_\lambda$ on any bounded set. 

\begin{asmp}\label{asmp:IBCD}
\ \newline \vspace{-5mm}
    \begin{itemize}
        \item[(A1)] $\psi$ is of Legendre type  and $C^2$ on $\operatorname{int}(\operatorname{dom}\psi)$.
        \item[(A2)] The observed transport plan $\hat{X}_{ij}$ and the generated iterate $(u^k, v^k, C^k)$ satisfy $\hat{X}_{ij} \text{ and } (u_i^k + v_j^k - C_{ij}^k)/\gamma \in \operatorname{int}(\operatorname{dom}\psi)$ for all $i,j$.
        \item[(A3)] $R_1, R_2: \mathbb{R}^n \to \mathbb{R}$ and $R_3: \mathbb{R}^{n \times n} \to \mathbb{R}$ are strongly convex and continuously differentiable functions with their strong convexity constants be $\sigma_1, \sigma_2, \sigma_3 > 0$.
        \item[(A4)] $\mathcal{C} \subseteq \mathbb{R}^{n \times n}$ is a nonempty, closed, and convex set.
    \end{itemize}
\end{asmp}

Under these conditions, \Cref{alg:IBCD} not only converges to the unique minimizer of $E_\lambda$ but does so at a linear rate. We summarize this result in the following theorem.

\begin{thm}\label{thm:IBCD_convergence}
    Let \Cref{asmp:IBCD} hold. Let the regularizer parameter $\gamma,\lambda >0$ and the line search parameters satisfy $\beta \in (0, 1)$ and $c_1 \in (0, 1/2)$. Then the sequence $\{z^k = (u^k, v^k, C^k)\}$ generated by \Cref{alg:IBCD} satisfies that $\{E_\lambda(u^k,v^k,C^k)\}$ converges to $E_\lambda^*$ Q-linearly and $\{(u^k,v^k,C^k)\}$ converges to $(u^*_\lambda,v^*_\lambda,C^*_\lambda)$ R-linearly.
\end{thm}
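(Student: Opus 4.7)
The plan is to reduce the analysis to a classical Luo--Tseng style linear convergence argument for block coordinate descent on a strongly convex objective. The new ingredient relative to the exact AM setting is that the Newton and projected-gradient inexact updates still produce sufficient-decrease inequalities with a \emph{uniform} constant, which holds because the iterates remain in a compact set on which the relevant derivatives are uniformly bounded.

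First I would note that $E_\lambda$ is strongly convex on $\mathbb{R}^n \times \mathbb{R}^n \times \mathcal{C}$ with modulus $\sigma := \lambda\min(\sigma_1,\sigma_2,\sigma_3) > 0$, since $E$ is convex (as $\psi$ is convex) and (A3) provides strong convexity of $R_1 + R_2 + R_3$. Hence $E_\lambda$ admits a unique minimizer $z_\lambda^* = (u_\lambda^*, v_\lambda^*, C_\lambda^*)$, and its sublevel set $\{z : E_\lambda(z) \leq E_\lambda(z^0)\}$ is compact. Because every sub-step uses Armijo backtracking, $E_\lambda(z^{k+1}) \leq E_\lambda(z^k)$, so the entire trajectory $\{z^k\}$ lies in a fixed compact set $K$; together with (A2), $K$ may be chosen so that each $(u_i^k + v_j^k - C_{ij}^k)/\gamma$ stays in a compact subset of $\operatorname{int}(\operatorname{dom}\psi)$.

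On $K$, (A1) and (A3) imply that $\nabla E_\lambda$ is Lipschitz continuous with some constant $L$, and that the block Hessians $\nabla^2_{uu} E_\lambda$ and $\nabla^2_{vv} E_\lambda$ are positive definite with eigenvalues in a uniform range $[\lambda\sigma_i,\, M']$. Standard Armijo analysis then yields a uniform lower bound $\bar\alpha > 0$ on $\alpha_k^u, \alpha_k^v$ and an analogous bound on $\alpha_k^C$ via the projected-gradient backtracking, so that each sweep satisfies sufficient-decrease inequalities of the form
\begin{align*}
E_\lambda(u^{k+1}, v^k, C^k) &\leq E_\lambda(z^k) - \eta\,\|\nabla_u E_\lambda(z^k)\|^2, \\
E_\lambda(u^{k+1}, v^{k+1}, C^k) &\leq E_\lambda(u^{k+1}, v^k, C^k) - \eta\,\|\nabla_v E_\lambda(u^{k+1}, v^k, C^k)\|^2, \\
E_\lambda(z^{k+1}) &\leq E_\lambda(u^{k+1}, v^{k+1}, C^k) - \eta\,\|\mathcal{G}_C(u^{k+1}, v^{k+1}, C^k)\|^2,
\end{align*}
for a uniform $\eta > 0$, where $\mathcal{G}_C(z) := C - P_{\mathcal{C}}\bigl(C - \nabla_C E_\lambda(z)\bigr)$ is the $C$-block projected-gradient mapping.

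Finally, strong convexity of $E_\lambda$ on $\mathbb{R}^n\times\mathbb{R}^n\times\mathcal{C}$ yields the quadratic-growth bound $E_\lambda(z) - E_\lambda^* \leq \tfrac{1}{2\sigma}\|\mathcal{G}(z)\|^2$ for the full projected-gradient mapping $\mathcal{G}$ on the constrained domain. Combining this with a per-sweep bound that controls $\|\mathcal{G}(z^k)\|^2$ by the three partial gradient/gradient-mapping norms appearing above, I obtain
\[
E_\lambda(z^{k+1}) - E_\lambda^* \leq (1-\eta')\bigl(E_\lambda(z^k) - E_\lambda^*\bigr)
\]
for some $\eta' \in (0,1)$, i.e., Q-linear convergence of the objective; R-linear convergence of $\{z^k\}$ to $z_\lambda^*$ then follows from $\tfrac{\sigma}{2}\|z^k - z_\lambda^*\|^2 \leq E_\lambda(z^k) - E_\lambda^*$. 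The main obstacle is exactly this per-sweep inequality bounding $\|\mathcal{G}(z^k)\|^2$ by the three partial norms: it is the inexact, projected analogue of the telescoping argument in \cite[Prop.~3.4]{luo1993error} and requires combining the uniform Lipschitz constant of $\nabla E_\lambda$ on $K$, the uniform lower bound on the Armijo step sizes, and the non-expansiveness of $P_{\mathcal{C}}$ to control the cross-block perturbations between consecutive updates.
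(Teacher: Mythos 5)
Your argument is correct in outline, but it follows a genuinely different route from the paper's. The paper establishes two lemmas, a sufficient-decrease bound in terms of the iterate displacement, $E_\lambda(z^k)-E_\lambda(z^{k+1})\geq w_1\|z^{k+1}-z^k\|^2$, and a uniform lower bound on the Armijo step sizes, and then recasts the entire three-block sweep as a single \emph{perturbed} projected-gradient step $z^{k+1}=[z^k-\alpha_k^C\nabla E_\lambda(z^k)+e^k]_{\mathcal{Z}}$, showing $\|e^k\|^2\leq M_{\max}\|z^{k+1}-z^k\|^2$ and delegating the linear rate to the error-bound theorem of Luo--Tseng (their Theorem~3.1). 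You instead phrase sufficient decrease in terms of partial gradient and gradient-mapping norms at the intermediate points, invoke the (proximal-)PL inequality implied by strong convexity, and aim for a direct per-iteration contraction $E_\lambda(z^{k+1})-E_\lambda^*\leq(1-\eta')(E_\lambda(z^k)-E_\lambda^*)$; the ``per-sweep'' domination of $\|\mathcal{G}(z^k)\|^2$ by the three block residuals that you flag as the main obstacle is in fact provable from exactly the ingredients you list, since $\|u^{k+1}-u^k\|\leq\sigma_u^{-1}\|\nabla_u E_\lambda(z^k)\|$ and its analogues let you transfer gradients between consecutive intermediate points via the uniform Lipschitz constant and the non-expansiveness of $P_{\mathcal{C}}$, so your sketch closes. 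The trade-off: your PL-based route is self-contained and avoids citing the external error-bound machinery, at the cost of carrying out the cross-block perturbation bookkeeping explicitly; the paper's route packages that bookkeeping into the error vector $e^k$ and a displacement-based decrease, which makes the perturbation analysis shorter but leans entirely on the cited theorem for the actual contraction. Two minor points to tighten if you write this out: the sufficient decrease for the Newton blocks needs the \emph{upper} eigenvalue bound on $H_u^k$ (so that $\langle g,(H_u^k)^{-1}g\rangle\geq\|g\|^2/M'$), which you do have from the compactness of the sublevel set, and the constant in your constrained quadratic-growth bound $E_\lambda(z)-E_\lambda^*\leq\tfrac{1}{2\sigma}\|\mathcal{G}(z)\|^2$ should be adjusted to account for the step-size normalization of the gradient mapping.
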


To prove \Cref{thm:IBCD_convergence}, we begin by establishing the common notations and key properties derived from \Cref{asmp:IBCD}. These will serve as the foundation for the subsequent analysis.

Denote $z = (u,v,C)$ and $f(z) = E_\lambda(z)$. By (A3) and (A4) of \Cref{asmp:IBCD}, $f(z)$ is strongly convex with modulus $\sigma = \lambda \min(\sigma_1, \sigma_2, \sigma_3) > 0$ and have a unique minimizer $z^* \in \mathbb{R}^n \times \mathbb{R}^n \times \mathcal{C}$. As a corollary of \Cref{asmp:IBCD}, the level set $\mathcal{S}_0 = \{z \in \mathbb{R}^n \times \mathbb{R}^n \times \mathcal{C} \mid f(z) \le f(z^0)\}$ is compact. Denote $z^{k,u} = (u^{k+1},v^k,C^k)$ and $z^{k,v} = (u^{k+1},v^{k+1},C^k)$ and define $D_u^k := f(z^k)-f(z^{k,u})$, $D_v^k := f(z^{k,u})-f(z^{k,v})$, and $D_C^k := f(z^{k,v})-f(z^{k+1})$. Since the algorithm ensures $f(z^{k+1}) \le f(z^{k,v}) \leq f(z^{k,u}) \le f(z^k)$ due to the Armijo condition and descent directions $d_u^k,d_v^k,d_C^k$, all iterates $\{z^k\}$ remain in the bounded set $\mathcal{S}_0$. As a result of \Cref{asmp:IBCD}, $\nabla f$ is Lipschitz continuous on $\mathcal{S}_0$ with some constant $L < \infty$. This also implies that the block Hessians $\nabla^2_{uu}f, \nabla^2_{vv}f, \nabla^2_{CC}f$ have eigenvalues bounded above by $L_u, L_v, L_C$ respectively (where $L_u, L_v, L_C \le L$) on $\mathcal{S}_0$. So the block subproblems $f_u^k(u) := f(u, v^k, C^k)$, $f_v^k(v) := f(u^{k+1}, v, C^k)$, and $f_C^k(C) := f(u^{k+1}, v^{k+1}, C)$ (restricted to $C \in \mathcal{C}$) are strongly convex with moduli $\sigma_u = \lambda \sigma_1$, $\sigma_v = \lambda \sigma_2$, and $\sigma_C = \lambda \sigma_3$, respectively. They are also smooth with Lipschitz gradients bounded by $L_u, L_v, L_C$ on the relevant domains defined by $\mathcal{S}_0$.

The proof proceeds in two main steps, formalized through the following lemmas: Lemma~\ref{lem:sufficient_decrease}, which establishes sufficient decrease at each step, and Lemma~\ref{lem:bounded_steps}, which ensures that the Armijo step sizes are uniformly bounded away from zero.

\begin{lem}[Sufficient Decrease per Block]
\label{lem:sufficient_decrease}
Let \Cref{asmp:IBCD} hold. There exists a constant $w_1 >0$, such that
\begin{equation}
    f(z^k) - f(z^{k+1}) \geq w_1 \|z^{k+1} - z^k\|^2.
\end{equation}
\end{lem}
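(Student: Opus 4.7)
The plan is to decompose the one-step decrease into three block contributions
$f(z^k)-f(z^{k+1}) = D_u^k + D_v^k + D_C^k$
and bound each term from below by a constant multiple of the corresponding block displacement squared. Throughout I will use the fact that the sequence stays in the compact sublevel set $\mathcal{S}_0$, so the Hessian of $f$ restricted to each block has eigenvalues in $[\sigma_u,L_u]$, $[\sigma_v,L_v]$, $[\sigma_C,L_C]$, respectively, with $\sigma_u,\sigma_v,\sigma_C>0$ coming from the strong convexity of $R_1,R_2,R_3$ and finiteness of $L_u,L_v,L_C$ coming from the $C^2$ assumption (A1) together with boundedness of the iterates.

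For the $u$-block I would argue as follows. The Newton direction $d_u^k=-(H_u^k)^{-1}g_u^k$ satisfies
$-(g_u^k)^\top d_u^k = (g_u^k)^\top (H_u^k)^{-1} g_u^k \ge \tfrac{1}{L_u}\|g_u^k\|^2$
and $\|d_u^k\|^2\le \tfrac{1}{\sigma_u^2}\|g_u^k\|^2$, so $-(g_u^k)^\top d_u^k \ge \tfrac{\sigma_u^2}{L_u}\|d_u^k\|^2$. Combined with the Armijo acceptance criterion and the step-size lower bound $\alpha_k^u\ge\bar\alpha_u>0$ supplied by Lemma~\ref{lem:bounded_steps} (and $\alpha_k^u\le 1$), this gives
\[
D_u^k \ge c_1\alpha_k^u\bigl(-(g_u^k)^\top d_u^k\bigr)
       \ge \frac{c_1\bar\alpha_u\sigma_u^2}{L_u}\,\|d_u^k\|^2
       \ge \frac{c_1\bar\alpha_u\sigma_u^2}{L_u}\,\|u^{k+1}-u^k\|^2,
\]
where the last step uses $\|u^{k+1}-u^k\|=\alpha_k^u\|d_u^k\|\le\|d_u^k\|$. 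The $v$-block is handled identically with its own constants.

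For the $C$-block I would exploit the variational characterization of the projection. Setting $C^+=P_\mathcal{C}(C^k-\alpha_k^C g_C^k)=C^{k+1}$, the inequality
$\langle C^+-(C^k-\alpha_k^C g_C^k),\,C^k-C^+\rangle\ge 0$
rearranges to $\langle g_C^k, C^{k+1}-C^k\rangle \le -\tfrac{1}{\alpha_k^C}\|C^{k+1}-C^k\|^2$. Plugging this into the Armijo condition and using $\alpha_k^C\le 1$ yields
\[
D_C^k \ge -c_1\langle g_C^k,C^{k+1}-C^k\rangle \ge \frac{c_1}{\alpha_k^C}\|C^{k+1}-C^k\|^2 \ge c_1\|C^{k+1}-C^k\|^2.
\]
Summing the three block bounds and choosing
$w_1 = \min\!\bigl(\tfrac{c_1\bar\alpha_u\sigma_u^2}{L_u},\,\tfrac{c_1\bar\alpha_v\sigma_v^2}{L_v},\,c_1\bigr)$
gives
$f(z^k)-f(z^{k+1})\ge w_1\bigl(\|u^{k+1}-u^k\|^2+\|v^{k+1}-v^k\|^2+\|C^{k+1}-C^k\|^2\bigr) = w_1\|z^{k+1}-z^k\|^2$.

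The main obstacle is the step-size lower bound $\alpha_k^u,\alpha_k^v,\alpha_k^C \ge \bar\alpha>0$ uniformly in $k$; I invoke Lemma~\ref{lem:bounded_steps} for this, but its proof is itself delicate because one must verify that (A2) is preserved along the backtracking trajectory so that the Lipschitz smoothness of $f$ applies, and then use the standard descent-lemma argument to show backtracking terminates before $\alpha$ drops below a quantity of order $\beta(1-c_1)/L$. A secondary but routine point is to confirm that the intermediate iterates $z^{k,u}$ and $z^{k,v}$ remain in $\mathcal{S}_0$, which follows because each block update is itself a descent step ensured by the Armijo condition.
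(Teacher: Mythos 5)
Your proof is correct and follows essentially the same route as the paper's: decompose the decrease into the three block contributions $D_u^k+D_v^k+D_C^k$, bound the $u$- and $v$-blocks via the Armijo condition combined with the curvature of the block Hessian, and bound the $C$-block via the Armijo condition combined with the variational characterization of the projection. The only difference is in the $u$/$v$-blocks, where the paper uses $c_1\alpha_k^u\langle g_u^k,(H_u^k)^{-1}g_u^k\rangle \ge c_1\alpha_k^u\sigma_u\|d_u^k\|^2 = (c_1\sigma_u/\alpha_k^u)\|u^{k+1}-u^k\|^2 \ge c_1\sigma_u\|u^{k+1}-u^k\|^2$, relying only on $\alpha_k^u\le 1$; this avoids both the upper eigenvalue bound $L_u$ and the appeal to \Cref{lem:bounded_steps}, giving a cleaner constant and removing the dependency on the step-size lower bound that your version of the argument requires (your use of that lemma is nonetheless legitimate, since its proof does not rely on this one).
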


\begin{proof}
Let $\Delta u = u^{k+1} - u^k = \alpha_k^u d_u^k$. From the Armijo condition: 
$$D_u^k \ge  -c_1 \alpha_k^u \langle g_u^k, d_u^k \rangle = c_1 \alpha_k^u \langle g_u^k, (H_u^k)^{-1} g_u^k \rangle.$$ 
Since $H_u^k \succeq \sigma_u I$,  we have 
$$\langle g_u^k, (H_u^k)^{-1} g_u^k \rangle \ge \sigma_u \|(H_u^k)^{-1} g_u^k\|^2 = \sigma_u \|d_u^k\|^2.$$ 
As a result, $$D_u^k \ge c_1 \alpha_k^u \sigma_u \|d_u^k\|^2 \ge  \frac{c_1 \sigma_u}{\alpha_k^u} \|\Delta u\|^2,$$ since $\Delta u = \alpha_k^u d_u^k$. Note that the step size is always $\alpha_k^u \le  1$. Therefore, $D_u^k \ge  c_1 \sigma_u \|u^{k+1} - u^k\|^2$. Let $\tilde{\omega}_u = c_1 \sigma_u > 0$. We have 
$$D_u^k \ge \tilde{\omega}_u \|u^{k+1} - u^k\|^2.$$

Similarly let $\tilde{\omega}_v = c_1 \sigma_v > 0$. We have $$D_v^k \ge \tilde{\omega}_v \|v^{k+1} - v^k\|^2.$$

Let $\Delta C = C^{k+1} - C^k$. The Armijo condition implies $$D_C^k = f_C^k(C^k) - f_C^k(C^{k+1}) \ge -c_1 \langle g_C, \Delta C \rangle.$$
  By the projection property, we have: 
    $$\langle (C^k - \alpha_k^C g_C) - C^{k+1}, C^k - C^{k+1} \rangle \le 0.$$
So we have $\langle g_C, \Delta C \rangle \le -\frac{1}{\alpha_k^C} \|\Delta C\|^2$. (This requires $\alpha_k^C > 0$, which holds if $\Delta C \neq 0$).  Substitute into the Armijo bound:
    $$D_C^k \ge -c_1 \langle g_C, \Delta C \rangle \ge  \frac{c_1}{\alpha_k^C} \|\Delta C\|^2 \ge c_1\|\Delta C\|^2.$$
Therefore, we have:
$$
\begin{aligned}
    f(z^k) - f(z^{k+1}) &= D_u^k + D_v^k + D_C^k \\
    &\ge \tilde{\omega}_u \|u^{k+1}-u^k\|^2 + \tilde{\omega}_v \|v^{k+1}-v^k\|^2 + \tilde{\omega}_C \|C^{k+1}-C^k\|^2 \\
    &\ge w_1 (\|u^{k+1}-u^k\|^2 + \|v^{k+1}-v^k\|^2 + \|C^{k+1}-C^k\|^2) \\
    &=w_1 \|z^{k+1} - z^k\|^2,
\end{aligned}
$$
where $w_1 = \min(\tilde{\omega}_u, \tilde{\omega}_v, \tilde{\omega}_C) = c_1 \min(\sigma_u, \sigma_v, 1) > 0$. 
\end{proof}

\begin{lem}[Uniformly Bounded Step Sizes]
\label{lem:bounded_steps}
Let \Cref{asmp:IBCD} hold. The Armijo stepsize $\alpha_k^u,\alpha_k^v,\alpha_k^C$ have a uniform strictly positive lower bound $\bar{\alpha}_{min} > 0$.
\end{lem}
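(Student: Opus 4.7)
The plan is to run the textbook Armijo backtracking argument separately for each of the three blocks: use the descent lemma (from the uniform Lipschitz continuity of $\nabla f$ on the bounded level set $\mathcal{S}_0$) together with the specific descent properties of the Newton direction (for $u,v$) and of the projected-gradient direction (for $C$) to identify an explicit threshold $\bar\alpha$ below which the sufficient-decrease inequality is automatic. The $\beta$-backtracking rule then guarantees that the accepted step is at least $\min(1,\beta\bar\alpha)$, and taking the minimum over blocks yields the desired $\bar\alpha_{\min}$.

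For the $u$-block I would first note that, since $f$ decreases monotonically, the iterates lie in $\mathcal{S}_0$, where the block Hessian satisfies $\sigma_u I \preceq H_u^k \preceq L_u I$. Applying the descent lemma to $u\mapsto f(u,v^k,C^k)$ along $d_u^k=-(H_u^k)^{-1}g_u^k$ gives
\[
f(u^k+\alpha d_u^k,v^k,C^k) \le f(z^k) + \alpha\langle g_u^k,d_u^k\rangle + \tfrac{L_u\alpha^2}{2}\|d_u^k\|^2.
\]
Substituting the bounds $-\langle g_u^k,d_u^k\rangle = \langle g_u^k,(H_u^k)^{-1}g_u^k\rangle \ge \tfrac{1}{L_u}\|g_u^k\|^2$ and $\|d_u^k\|^2 \le \tfrac{1}{\sigma_u^2}\|g_u^k\|^2$ shows that the Armijo condition holds whenever $\alpha \le \tfrac{2(1-c_1)\sigma_u^2}{L_u^2}$, so backtracking terminates with $\alpha_k^u \ge \min\!\bigl(1,\tfrac{2\beta(1-c_1)\sigma_u^2}{L_u^2}\bigr)$. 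The same argument applies verbatim to the $v$-block.

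For the $C$-block I would combine the descent lemma with the projection inequality. Writing $\Delta C(\alpha)=P_\mathcal{C}(C^k-\alpha g_C^k)-C^k$, the optimality of the projection gives $\langle g_C^k,\Delta C(\alpha)\rangle \le -\tfrac{1}{\alpha}\|\Delta C(\alpha)\|^2$; combining this with $f_C^k(C^k+\Delta C(\alpha))\le f_C^k(C^k)+\langle g_C^k,\Delta C(\alpha)\rangle+\tfrac{L_C}{2}\|\Delta C(\alpha)\|^2$ reduces the Armijo inequality to $\tfrac{1-c_1}{\alpha}\ge \tfrac{L_C}{2}$, i.e.\ $\alpha \le \tfrac{2(1-c_1)}{L_C}$. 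Thus $\alpha_k^C \ge \min\!\bigl(1,\tfrac{2\beta(1-c_1)}{L_C}\bigr)$, and $\bar\alpha_{\min}$ is the minimum of the three block-wise lower bounds.

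The main obstacle is not the arithmetic above but verifying that $L_u,L_v,L_C$ are truly \emph{uniform} constants. This requires that every backtracking trial point---not only the accepted iterates---lies in a common bounded subset of $\mathrm{int}(\mathrm{dom}\,\psi)^{n\times n}$ where $\nabla^2 \psi$ is bounded. Assumption (A2) ensures this holds at the accepted iterates, and since the trial step sizes form a decreasing geometric sequence starting from $1$, a continuity argument together with the closedness of $\mathcal{S}_0$ confines all trial points to a compact neighborhood of $\mathcal{S}_0$ on which the Lipschitz moduli $L_u,L_v,L_C$ are finite. This is the only place where the openness condition in (A2) does real work in the argument.
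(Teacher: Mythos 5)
Your proof is correct and follows essentially the same route as the paper: the descent lemma on the bounded level set, combined with the backtracking rule, yields an explicit threshold below which the Armijo test cannot fail, giving $\alpha_k \ge \min(1,\beta\bar\alpha)$ for each block. You in fact go slightly further than the paper, which writes out only the projected-gradient ($C$-block) case and dismisses the Newton blocks with a ``without loss of generality,'' and you are right to flag that the Lipschitz moduli must be valid at the \emph{rejected} trial points (which need not lie in the level set $\mathcal{S}_0$) --- a subtlety the paper's proof glosses over as well.
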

\begin{proof}
Without loss of generality, we only show that such a lower exists for $\alpha_k^C$. The line search starts with $\alpha_{init}=1$. If $\alpha=1$ works, $\alpha_k^C=1$. If not, it reduces by $\beta$. The first step $\alpha'$ that fails the Armijo condition is $\alpha_k^C / \beta$. This step failed because:
    $$f_C^k(C^k + d(\alpha')) > f_C^k(C^k) + c_1 \langle g_C, d(\alpha') \rangle,$$ where $d(\alpha') = P_{\mathcal{C}}(C^k - \alpha' g_C) - C^k$. Using $L_C$-smoothness: $$f_C^k(C^k + d(\alpha')) \le f_C^k(C^k) + \langle g_C, d(\alpha') \rangle + \frac{L_C}{2} \|d(\alpha')\|^2.$$
     Combining gives: 
     $$(c_1 - 1)\langle g_C, d(\alpha') \rangle < \frac{L_C}{2} \|d(\alpha')\|^2.$$
    From the projection property derived above: $$\langle g_C, d(\alpha') \rangle \le -(1/\alpha') \|d(\alpha')\|^2.$$
    Substitute: $$\frac{1-c_1}{\alpha'} \|d(\alpha')\|^2 \le (c_1 - 1)\langle g_C, d(\alpha') \rangle < \frac{L_C}{2} \|d(\alpha')\|^2.$$
    Assuming $d(\alpha') \neq 0$ (i.e., $C^k$ is not optimal for the subproblem), we can divide by $\|d(\alpha')\|^2$ and obtain that $\frac{1-c_1}{\alpha'} < \frac{L_C}{2} \implies \alpha' > \frac{2(1-c_1)}{L_C}.$ Since the failed step was $\alpha' = \alpha_k^C / \beta$, we have $\alpha_k^C / \beta > \frac{2(1-c_1)}{L_C}$. Thus, $\alpha_k^C > \frac{2\beta(1-c_1)}{L_C}$. Combining with the case where $\alpha_k^C = \alpha_{init} = 1$, we have a uniform lower bound: $\bar{\alpha}_{C, min} =: \min(1, \frac{2\beta(1-c_1)}{L_C}) > 0$. Denote $\bar{\alpha}_{min} > 0$ the common lower bound of the Armijo stepsize $\alpha_k^u,\alpha_k^v,\alpha_k^C$.
\end{proof}

Finally, we return to the proof of our main theorem.
\begin{proof}[Proof of \Cref{thm:IBCD_convergence}]
     
Define$ \mathcal{Z} = \mathbb{R}^n \times \mathbb{R}^n \times \mathcal{C}$. Now rewrite our iteration into
$$z^{k+1} = [z^k - \alpha_k^C \nabla f(z^k) + e^k]_{\mathcal{Z}},$$
where $e^k = (e_u^k, e_v^k, e_C^k)$ is defined by 
$$
\begin{cases}
    e_u^k = (\alpha_k^C I - \alpha_k^u H_u^{-1}) g_u(z^k) \\
    e_v^k = \alpha_k^C (g_v(z^k) - g_v^{k,u}) + (\alpha_k^C I - \alpha_k^v H_v^{-1}) g_v^{k,u} \\
    e_C^k = \alpha_k^C (g_C(z^k) - g_C^{k,v})
\end{cases}.
$$
We wish to bound $\|e^k\|^2 = \|e_u^k\|^2 + \|e_v^k\|^2 + \|e_C^k\|^2$ by $M_{max} \|z^k - z^{k+1}\|^2$ for some constant $M_{max}$. To bound $e_u^k$, we find  $\|e_u^k\| \le \|\alpha_k^C I - \alpha_k^u H_u^{-1}\| \|g_u^k\|$. Since $\alpha_k^C, \alpha_k^u  \le 1$ and $H_u^k \succeq \sigma_u I$, the operator norm $\|\alpha_k^C I - \alpha_k^u H_u^{-1}\|$ is uniformly bounded by  $1+1/\sigma_u$. Combining \Cref{lem:bounded_steps}, we have
    $$\|e_u^k\| \le \frac{1+\sigma_u}{\sigma_u} \|g_u^k\| \le\frac{(1+\sigma_u)L_u}{\sigma_u\bar{\alpha}_{min}}  \|\Delta u\|.$$
As for $e_v^k$, we have:
$$
\begin{aligned}
    \|e_v^k\| &\le \alpha_k^C \|g_v(z^k) - g_v^{k,u}\| + \|\alpha_k^C I - \alpha_k^v H_v^{-1}\| \|g_v^{k,u}\| \\
    & \le  L \|\Delta u\| + \frac{1+\sigma_v}{\sigma_v} \|g_v^{k,u}\| \\
    & \le  L \|\Delta u\| + \frac{(1+\sigma_v)L_v}{\sigma_v\bar{\alpha}_{min}} \|\Delta v\| \\
\end{aligned}
$$
Finally for $e_C^k$:
    $$\|e_C^k\| = \|\alpha_k^C (g_C(z^k) - g_C^{k,v})\| \le L\|\Delta u\| + L\|\Delta v\|.$$
Simple calculations show that there exists a constant $M_{max}$ such that 
    $$\|e^k\|^2 \le M_{max} (\|\Delta u\|^2 + \|\Delta v\|^2 + \|\Delta C\|^2) = M_{max} \|z^k - z^{k+1}\|^2.$$

By \cite[Theorem 3.1]{luo1993error} and \Cref{lem:sufficient_decrease}, we obtain  Q-linear convergence for $\{f(z^k)\}$ and R-linear convergence for $\{z^k\}$.

\end{proof}

\section{Experiments}\label{sec:experiment}
In this section, we present several numerical examples of IOT using the proposed algorithms. For the IOT problem formulated in \eqref{eqn:breg_iot_equiv_reg}, we specifically choose quadratic regularizers: $R_1(u) = \frac{1}{2}\|u\|_2^2$, $R_2(v) = \frac{1}{2}\|v\|_2^2$, and $R_3(C) = \frac{1}{2}\|C\|_F^2$. This choice significantly simplifies the subproblems within \Cref{alg:IBCD}. With these quadratic regularizers, the Hessian matrices for the $u$, $v$, and $C$ subproblems become diagonal. Consequently, the Newton step for each block can be computed efficiently through element-wise division by the diagonal Hessian entries, rather than solving a linear system, which substantially accelerates each iteration.

Moreover, for the $C$-block update, while \Cref{alg:IBCD} generally describes a projected gradient descent step, our practical implementation for the experiments employs a projected Newton step. Specifically, the Newton direction is computed as $d_C = - (\nabla^2_{CC} E_\lambda)^{-1} \nabla_C E_\lambda$, which simplifies to element-wise division due to the diagonal structure of the Hessian. The $C$-block is then updated via $C \leftarrow \text{Proj}_{\mathcal{C}}\big(C_\text{old} + \alpha_C d_C\big)$, where $\alpha_C$ is determined using a line search. Empirically, the Newton-based update for $C$ has been observed to yield faster convergence and better or comparable solution quality for these regularizers. The detailed iterative scheme employed in this section is provided in \Cref{alg:practical_bcd}.
\begin{algorithm}[h]
\caption{Practical BCD for IOT with Quadratic Regularizers}
\label{alg:practical_bcd}
\begin{algorithmic}[1]
\REQUIRE Initial $u^{(0)}, v^{(0)}, C^{(0)}$, observed $\hat{X}$, OT regularizer $\gamma$, BCD regularizer $\lambda$.
\FOR{$k=0, 1, 2, \dots$ until convergence}
    \STATE Set Newton direction $d_u = -\frac{\psi^\prime(\frac{u\oplus v^{k-1}-C^{k-1}}{\gamma})\mathbf{1} - \mu + \lambda u}{\frac{1}{\gamma}\psi^{\prime\prime}(\frac{u\oplus v^{k-1}-C^{k-1}}{\gamma})\mathbf{1}  + \lambda }$.
    \STATE Find step size $\alpha_u$ via line search.
    \STATE $u^{(k+1)} = u^{(k)} + \alpha_u d_u$.
    \STATE Set Newton direction $d_v = - \frac{\psi^\prime(\frac{u^k\oplus v-C^{k-1}}{\gamma})^\top\mathbf{1} - \nu + \lambda v}{\frac{1}{\gamma}\psi^{\prime\prime}(\frac{u^k\oplus v-C^{k-1}}{\gamma})^\top\mathbf{1}  + \lambda }$.
    \STATE Find step size $\alpha_v$ via line search.
    \STATE $v^{(k+1)} = v^{(k)} + \alpha_v d_v$.
    \STATE Set Newton direction $d_C = - \frac{\hat{X} - \psi^\prime(\frac{u^k\oplus v^{k}-C }{\gamma})  + \lambda C}{\frac{1}{\gamma}\psi^{\prime\prime}(\frac{u^k\oplus v^{k}-C}{\gamma})  + \lambda }$ .
    \STATE Find step size $\alpha_C$ via line search.
    \STATE $C^{(k+1)} = \text{Proj}_{\mathcal{C}}(C^{(k)} + \alpha_C d_C)$ .
\ENDFOR
\RETURN $u^{(k+1)}, v^{(k+1)}, C^{(k+1)}$.
\end{algorithmic}
\end{algorithm}

\begin{rmk}[Computational Complexity]
\label{rem:complexity}
The per-iteration computational complexity of Algorithm \ref{alg:practical_bcd} is dominated by the matrix operations required to compute the gradients and Hessians for the $u, v$, and $C$ subproblems. Specifically, evaluating the terms involving the Bregman generator $\psi$ (e.g., $\psi'((u \oplus v - C)/\gamma)$) requires element-wise operations on $n \times n$ matrices, leading to a complexity of $\mathcal{O}(n^2)$. When choosing $\mathcal{C} = S_h$, the projection step $\operatorname{Proj}_{\mathcal{C}}$ involves element-wise thresholding and symmetrization, which also costs $\mathcal{O}(n^2)$.  Crucially, thanks to the choice of quadratic regularizers $R_1, R_2, R_3$, the subproblems admit a diagonal Hessian structure. This allows the Newton directions $d_u, d_v, d_C$ to be computed via simple element-wise division, avoiding the computationally expensive solution of linear systems (which would typically scale as $\mathcal{O}(n^3)$ or higher). Consequently, the overall per-iteration complexity of our algorithm is $\mathcal{O}(n^2)$, scaling comparably to the standard Sinkhorn algorithm used for forward optimal transport.
\end{rmk}

Specific parameters for each experiment are detailed within their respective subsections. For \Cref{alg:practical_bcd}, convergence was typically assessed based on the value of the relative KKT residuals falling below a threshold of $10^{-6}$, with a maximum iteration limit of 100. More precisely, let $(u^{(k)}, v^{(k)}, C^{(k)})$ be the iterates at the $k$-th iteration. The absolute KKT residual $\mathcal{R}_k$ is defined to measure the violation of the first-order optimality conditions for the problem in \ref{eqn:breg_iot_equiv_reg}, which is computed as:
$$
\mathcal{R}_k := \sqrt{ \|\nabla_u E_\lambda(u^{(k)}, v^{(k)}, C^{(k)})\|^2_2 + \|\nabla_v E_\lambda(u^{(k)}, v^{(k)}, C^{(k)})\|^2_2 + \|C^{(k)} - \text{Proj}_{\mathcal{C}}(C^{(k)} - \nabla_C E_\lambda(u^{(k)}, v^{(k)}, C^{(k)}))\|_F^2 }
$$
The relative KKT residual is thus evaluated by $\mathcal{R}_k / \mathcal{R}_0$. For the choice of regularizer $\phi$, We mainly focus on the following choice of $\phi$ in \Cref{tab:bregman_derivative}. 
\begin{table}[]
    \centering
    \begin{tabular}{llll}
\hline$\phi(x)$ & $\phi^{\prime}(x)$ & $\psi^{\prime}(\theta)$ & $\psi^{\prime \prime}(\theta)$ \\
\hline 
$x \log x-x+1$ & $\log x$ & $\exp \theta$ & $\exp \theta$ \\
$x-\log x-1$ & $1-x^{-1}$ & $(1-\theta)^{-1}$ & $(1-\theta)^{-2}$ \\
$x \log x+(1-x) \log (1-x)$ & $\log \frac{x}{1-x}$ & $\frac{\exp \theta}{(1+\exp \theta)}$ & $\frac{\exp \theta}{(1+\exp \theta)^2}$ \\
$\frac{1}{\beta(\beta-1)}(x^\beta - \beta x+\beta-1)$ & $\frac{1}{\beta-1}(x^{\beta-1} -1)$ & $((\beta-1) \theta+1)^{\frac{1}{\beta-1}}$ & $((\beta-1) \theta+1)^{\frac{1}{\beta-1}-1}$ \\
\hline
\end{tabular}
    \caption{Bregman regularizers and their derivatives}
    \label{tab:bregman_derivative}
\end{table}

All experiments, unless otherwise specified, were conducted in MATLAB R2022a on a laptop equipped with an 11th Gen Intel(R) Core(TM) i7-11800H @ 2.30GHz CPU. The real-world marriage matching experiment was implemented in Python 3.13.2.

\subsection{Experiments on randomly generated marginals}
In this section, we fix a groud truth $C\in\mathbb{R}^{n\times n}$ with $C_{ij} = |\frac{i-j}{n}|^2$ and randomly generate the marginals $\mu$ and $\nu$. Given $\mu,\nu$ and $C$, we employ Alternate scaling algorithm in \cite{dessein2018regularized} to compute a transport plan $X$ as our observed transport plan. Then based on $X$ we use \Cref{alg:practical_bcd} to recover a cost matrix $\hat{C}$. Using this recovered cost matrix $\hat{C}$, we again apply Alternate scaling algorithm in \cite{dessein2018regularized} to $(\hat{C},\mu,\nu)$ and obtain a transport plan $\hat{X}$. For each regularizer, we repeat our experiments for 10 times and compute the average relative error of $C$ : $\frac{\|C-\hat{C}\|_F}{\|C\|_F}$ denoted by \textbf{C err}, the average relative error of $X$ : $\frac{\|X-\hat{X}\|_F}{\|X\|_F}$ denoted by \textbf{X err}, and the average running time \textbf{t}. We present our results in \Cref{tab:res_sh} and \Cref{tab:res_ed}. The reason why \textbf{C err} for the case of Burg entropy $\phi_1(x) = x-\log x-1$ and $\beta$-potential $\phi_2(x) = \frac{1}{\beta(\beta-1)}\left(x^\beta-\beta x+\beta-1\right)$ is large is probably because $\operatorname{int}(\operatorname{dom}(\psi_i))$ is bounded above (by $1$ for $\psi_1$, by $1/(1-\beta)$ for $\psi_2$). The objective function and its hessian require us to compute $\psi_i(z)$ and $\psi_i^{\prime\prime}(z)$ with $z = \frac{u_i+v_j-C_{ij}}{\gamma}$. But the algorithm itself doesn't impose harsh constraint so that $z\in\operatorname{int}(\operatorname{dom}(\psi_i))$. Also as $z$ tends to the upper bound, both $\psi_i(z)$ and $\psi_i^{\prime\prime}(z)$ tend to $\infty$. This results in serious numerical instability issue.

\begin{table}
				\centering
				\begin{tabular}{cccc}
					\hline
					$n=10$  & \textbf{C err} & \textbf{X err} & \textbf{t} \\
					\hline
					Boltzmann-Shannon entropy  & 0.021 & $9.18\times 10^{-4}$ & 0.0044 \\
					Burg entropy  & 0.93& 0.0027& 0.015\\
					Fermi-Dirac entropy  & 0.033 & 0.0025 & 0.0088\\
					$\beta$-potential  & 0.85 & 0.030 & 0.21\\
					\hline
					$n=50$ & \textbf{C err} & \textbf{X err} & \textbf{t} \\
					\hline
					Boltzmann-Shannon entropy  & 0.42 & 0.0474 & 0.029 \\
					Burg entropy  & 1.05& $1.86\times10^{-4}$& 0.076\\
					Fermi-Dirac entropy  & 0.43 & 0.045 & 0.049\\
					$\beta$-potential  & 0.86 & 0.023 & 0.068\\
					\hline
					$n=100$ & \textbf{C err} & \textbf{X err} & \textbf{t} \\
					\hline
					Boltzmann-Shannon entropy  & 0.69 & 0.11 & 0.058 \\
					Burg entropy  & 1.05 & $4.30\times10^{-5}$& 0.011\\
					Fermi-Dirac entropy  & 0.59 & 0.086 & 0.10\\
					$\beta$-potential  & 0.86 & 0.019 & 0.12\\
					\hline
				\end{tabular}
				\caption{This table presents the experimental results evaluated on randomly generated marginals when $\mathcal{C} = S_h$. In these experiments, the BCD regularization parameter is fixed at $\lambda = 10^{-8}$, and we set $\beta = 0.8$ for the $\beta$-potential.}
				\label{tab:res_sh}
\end{table}

\begin{table}
    \centering
    \begin{tabular}{cccc}
    \hline
       $n=10$& \textbf{C err} & \textbf{X err} & \textbf{t} \\
         \hline
       Boltzmann-Shannon entropy  & 0.018& $7.51\times 10^{-4}$& 0.027\\
       Burg entropy  & 0.94& 0.0029& 0.23\\
       Fermi-Dirac entropy  & 0.011& $5.13\times 10^{-4}$& 0.055\\
       $\beta$-potential  & 0.86& 0.030& 0.64\\
       \hline
        $n=50$& \textbf{C err} & \textbf{X err} & \textbf{t} \\
         \hline
       Boltzmann-Shannon entropy  & 0.44& 0.0481& 0.13\\
       Burg entropy  & 1.16& $1.56\times10^{-4}$& 6.05\\
       Fermi-Dirac entropy  & 0.41& 0.044& 0.18\\
       $\beta$-potential  & 0.86& 0.023& 0.20\\
       \hline
        $n=100$& \textbf{C err} & \textbf{X err} & \textbf{t} \\
         \hline
       Boltzmann-Shannon entropy  & 0.67& 0.10& 0.37\\
       Burg entropy  & 1.06& $4.72\times10^{-5}$& 4.81\\
       Fermi-Dirac entropy  & 0.59& 0.088& 0.67\\
       $\beta$-potential  & 0.86& 0.019& 0.38\\
       \hline
    \end{tabular}
    \caption{This table summarizes the recovery performance on randomly generated marginals when $\mathcal{C} = \operatorname{ED}$. The parameters are uniformly set to $\lambda = 10^{-8}$, with $\beta = 0.8$ specifically chosen for the $\beta$-potential case.}
    \label{tab:res_ed}
\end{table}

\subsection{Verification of Stability Bound}

This experiment aims to numerically verify the stability bound derived in \Cref{prop:stab} for the case where $\phi'_0 = -\infty$.  The cost matrices are restricted to the set $S_h$. We set the problem dimension $n=10$. For $\gamma\in(0.01,10)$ , we repeat each test 50 times. In each test, we generate random marginals $\mu, \nu $. A ground truth cost matrix $C \in S_h$ is randomly generated. The corresponding transport plan $X = F(C)$ is computed using the  Alternate scaling algorithm in \cite{dessein2018regularized} . A perturbed cost matrix $\hat{C} = \text{Proj}_{S_h}(C + \delta N)$ is created by adding scaled random noise $N$ with entries drawn from a standard normal distribution, projected onto $S_h$ to $C$, where the noise level $\delta = 0.01  / ||N||_{\infty}$ controls the perturbation magnitude. The corresponding plan $\hat{X} = F(\hat{C})$ is then computed. We calculate the Left-Hand Side (LHS) $||\hat{C} - C||_{\infty}$ and the Right-Hand Side (RHS) $2\gamma ||\nabla\phi(\hat{X}) -\nabla\phi(X)||_{\infty}$, taking care to handle potential near-zero entries in $X$ by adding a small  $\epsilon=10^{-20}$ before taking the logarithm.  Across all tested $\gamma$ values and trials, the inequality LHS $\leq$ RHS was consistently satisfied within numerical precision tolerance $10^{-9}$, providing strong numerical validation for \Cref{prop:stab} under the specified conditions. We also analyzed the ratio RHS/LHS. The minimum observed ratio was consistently greater than or equal to 1, as expected. The average ratio tended to increase with $\gamma$, suggesting the bound becomes less tight as the OT regularization increases. Representative results illustrating the pass rate and ratio behavior versus $\gamma$ are shown in \Cref{fig:stability}.

\begin{figure}[htbp]
    \centering
    \subfloat[Boltzmann-Shannon entropy]{\includegraphics[width=0.4\textwidth]{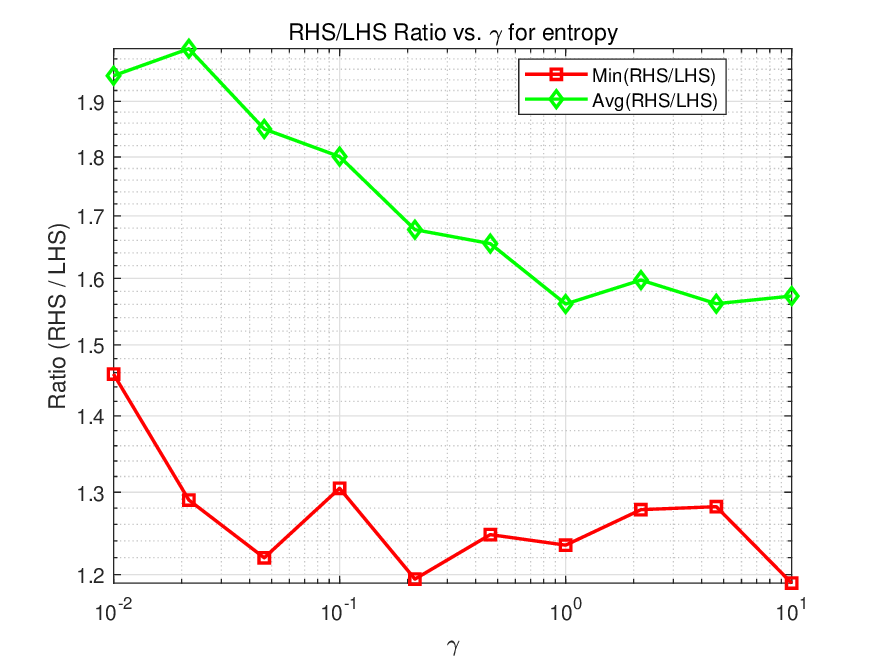}}
    \hfill
    \subfloat[Burg entropy]{\includegraphics[width=0.4\textwidth]{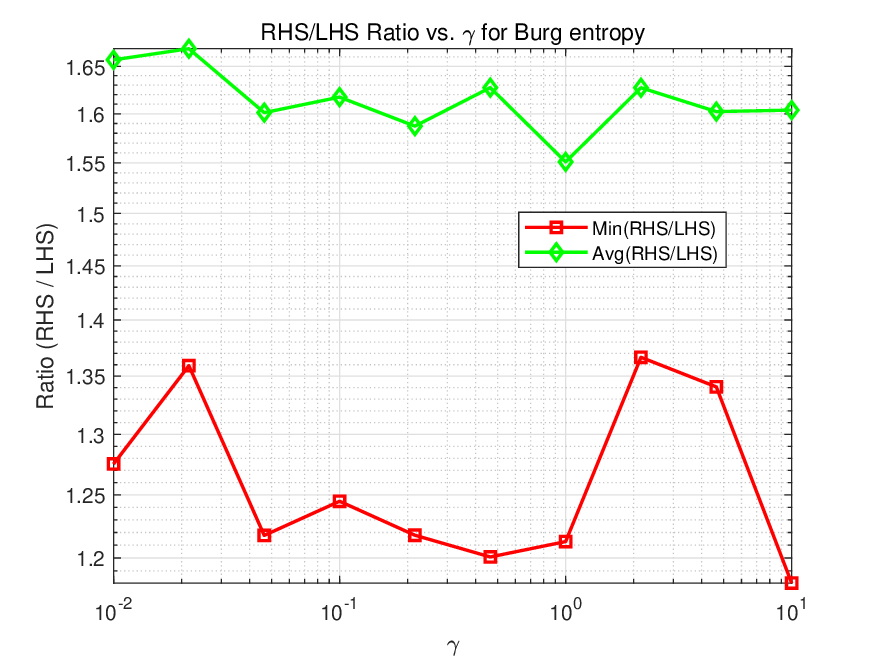}}
    
    \vspace{0.2cm} 
    
    \subfloat[Fermi-Dirac entropy]{\includegraphics[width=0.4\textwidth]{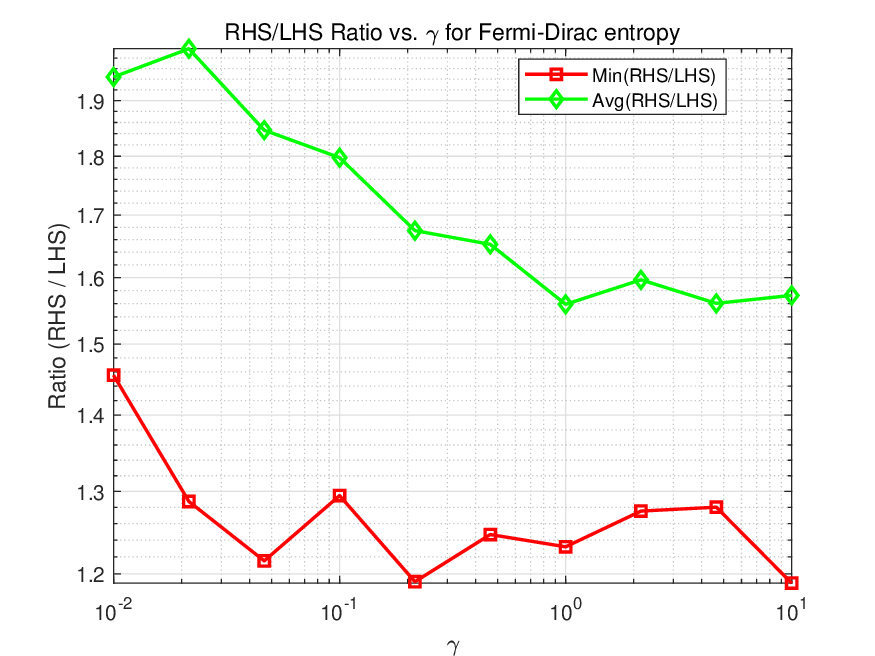}}
    \hfill
    \subfloat[$\beta$-potential]{\includegraphics[width=0.4\textwidth]{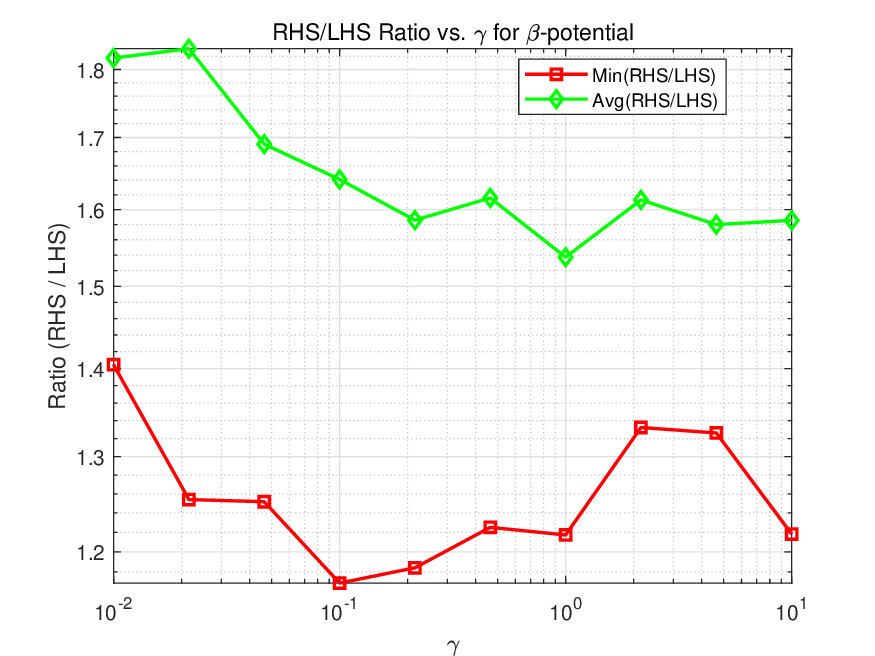}}
    \caption{This figure illustrates the numerical verification of the theoretical stability bounds across four different types of Bregman regularizers. For the $\beta$-potential regularizer, the parameter is set to $\beta=0.5$.}
    \label{fig:stability}
\end{figure}

\subsection{Effect of Regularization Parameter $\lambda$}

This experiment investigates the impact of the regularization parameter $\lambda$ from the penalized objective function in \eqref{eqn:breg_iot_equiv_reg} on the solution quality when using \Cref{alg:IBCD}  to solve the inverse problem.. The cost matrix search space is $S_h$. We test the problem with dimension $n=10$ and OT regularization $\gamma=0.1$ and generate a ground truth $C \in S_h$ and corresponding marginals $\mu, \nu$. The "observed" transport plan $X = F(C)$ is computed using the  Alternate scaling algorithm in \cite{dessein2018regularized} . We then employ \Cref{alg:IBCD}  to recover an estimated cost matrix $\hat{C}(\lambda)$ from $X$ for a range of $\lambda$ values. For each recovered $\hat{C}(\lambda)$, we also compute the corresponding transport plan $\hat{X}(\lambda) = F(\hat{C}(\lambda))$ using the  Alternate scaling algorithm in \cite{dessein2018regularized} . We evaluate the recovery quality using the relative error for the cost matrix, $||\hat{C}(\lambda) - C||_F / ||C||_F$, and the relative error for the transport plan, $||\hat{X}(\lambda) - X||_F / ||X||_F$. The results are displayed in \Cref{fig:lambda1} and \Cref{fig:lambda2}, which demonstrates the critical sensitivity of the IOT recovery to the regularization parameter $\lambda$. Our experimental results indicate that to achieve minimal relative error in both the recovered cost matrix $\hat{C}$  and the subsequently derived transport plan $\hat{X}(\lambda)$, $\lambda$ must be kept exceptionally small, typically in the order of $10^{-10}$ or lower. Even a marginal increase in $\lambda$ beyond this very low threshold, for instance to $10^{-8}$ or higher, results in a dramatically sharp increase in the cost matrix recovery error, underscoring the necessity of minimal regularization for faithful cost structure inference in this setting.

\begin{figure}[htbp]
    \centering
    \includegraphics[width=0.85\textwidth]{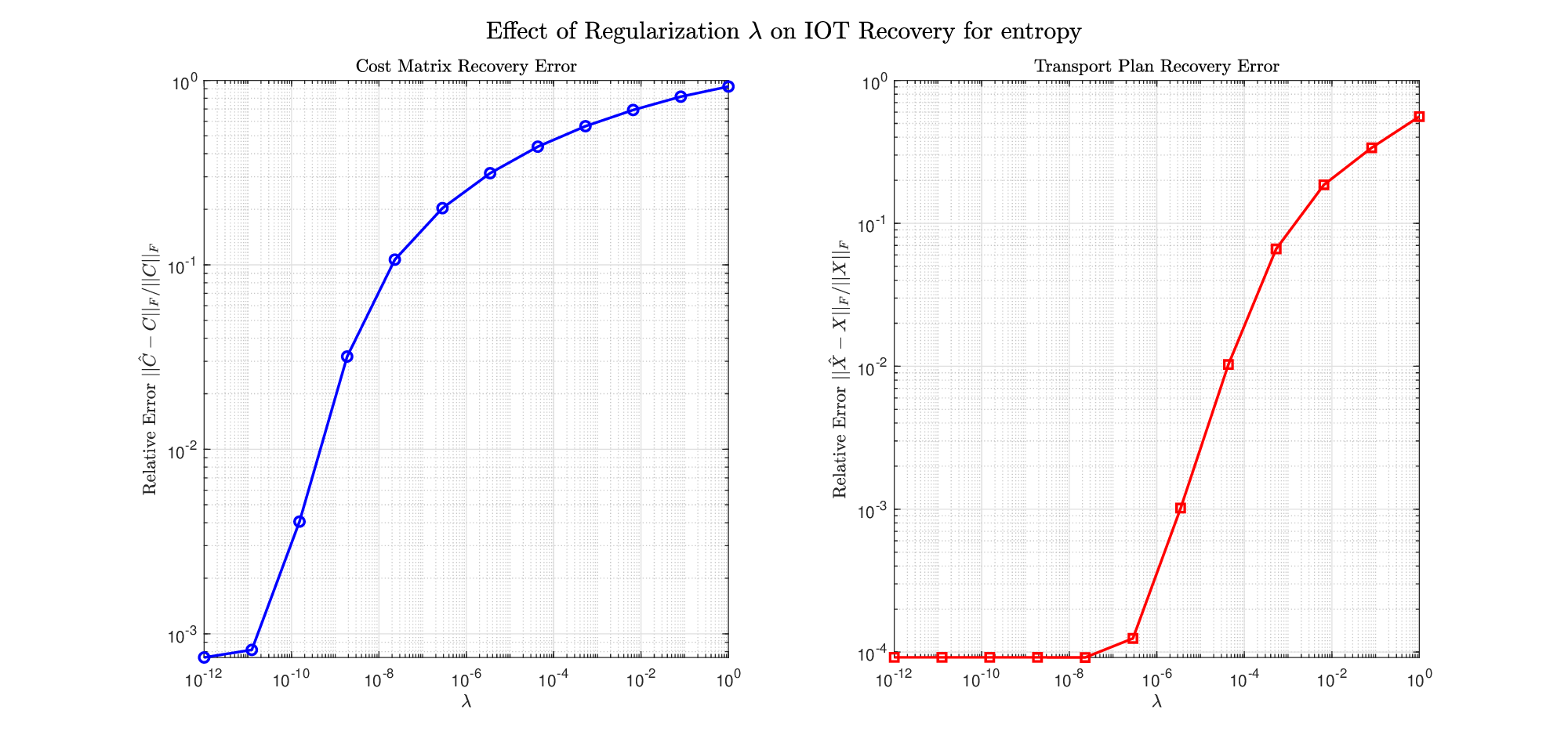}
    \includegraphics[width=0.85\textwidth]{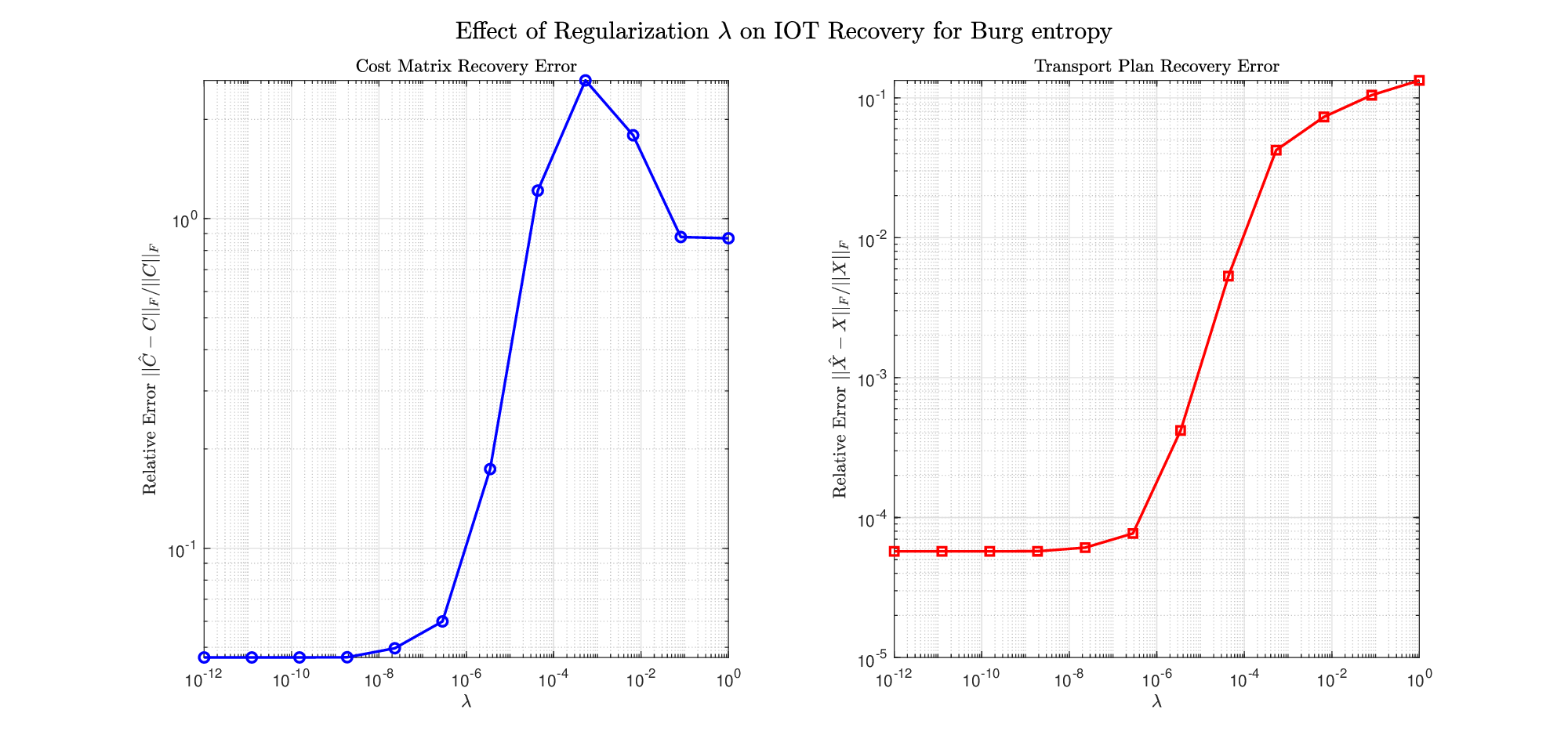}
    \caption{This figure demonstrates the effect of the regularization parameter $\lambda$ on the recovery performance when using the Boltzmann-Shannon entropy and Burg entropy.}
    \label{fig:lambda1}
\end{figure}

\begin{figure}[htbp]
    \centering
    \includegraphics[width=0.85\linewidth]{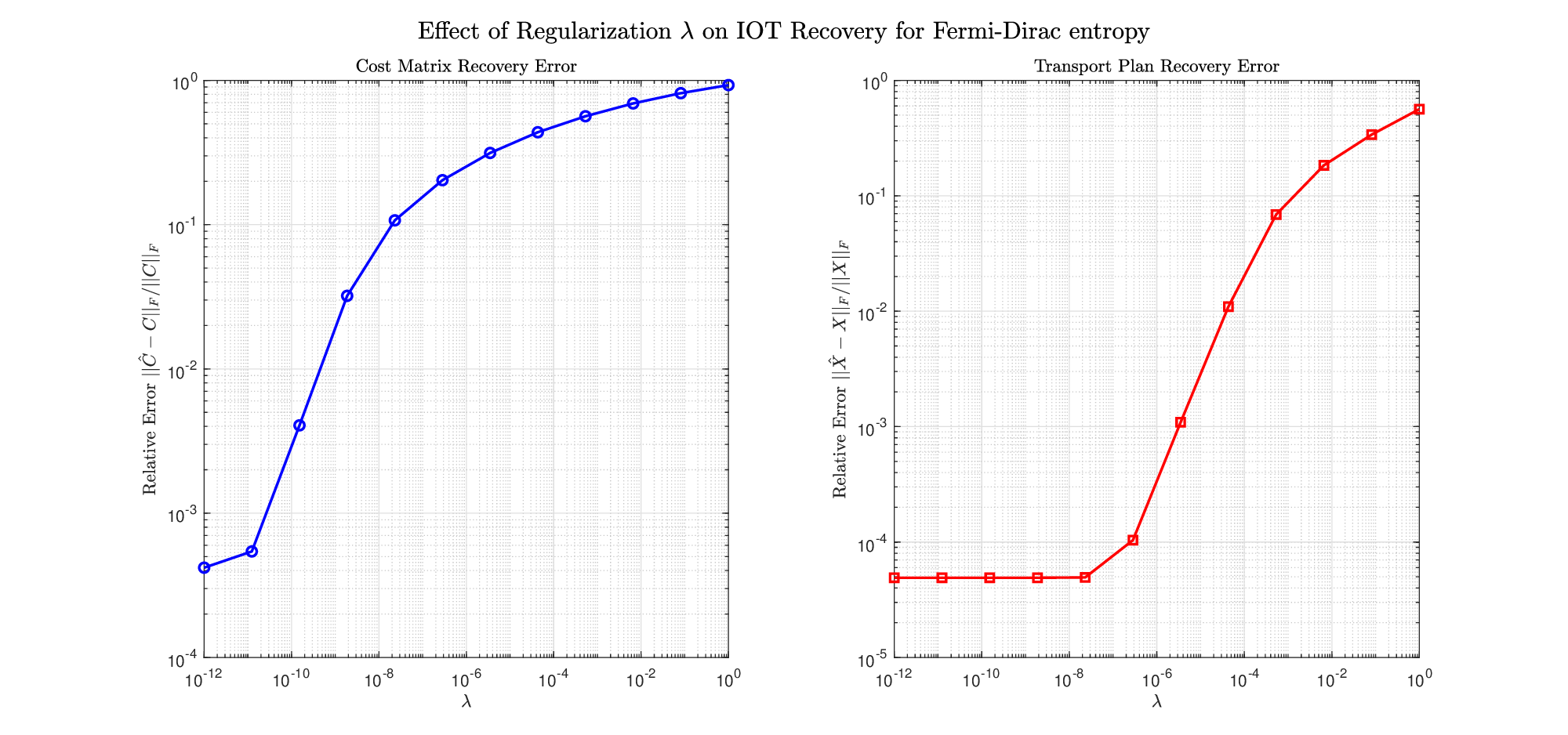}
    \includegraphics[width=0.85\linewidth]{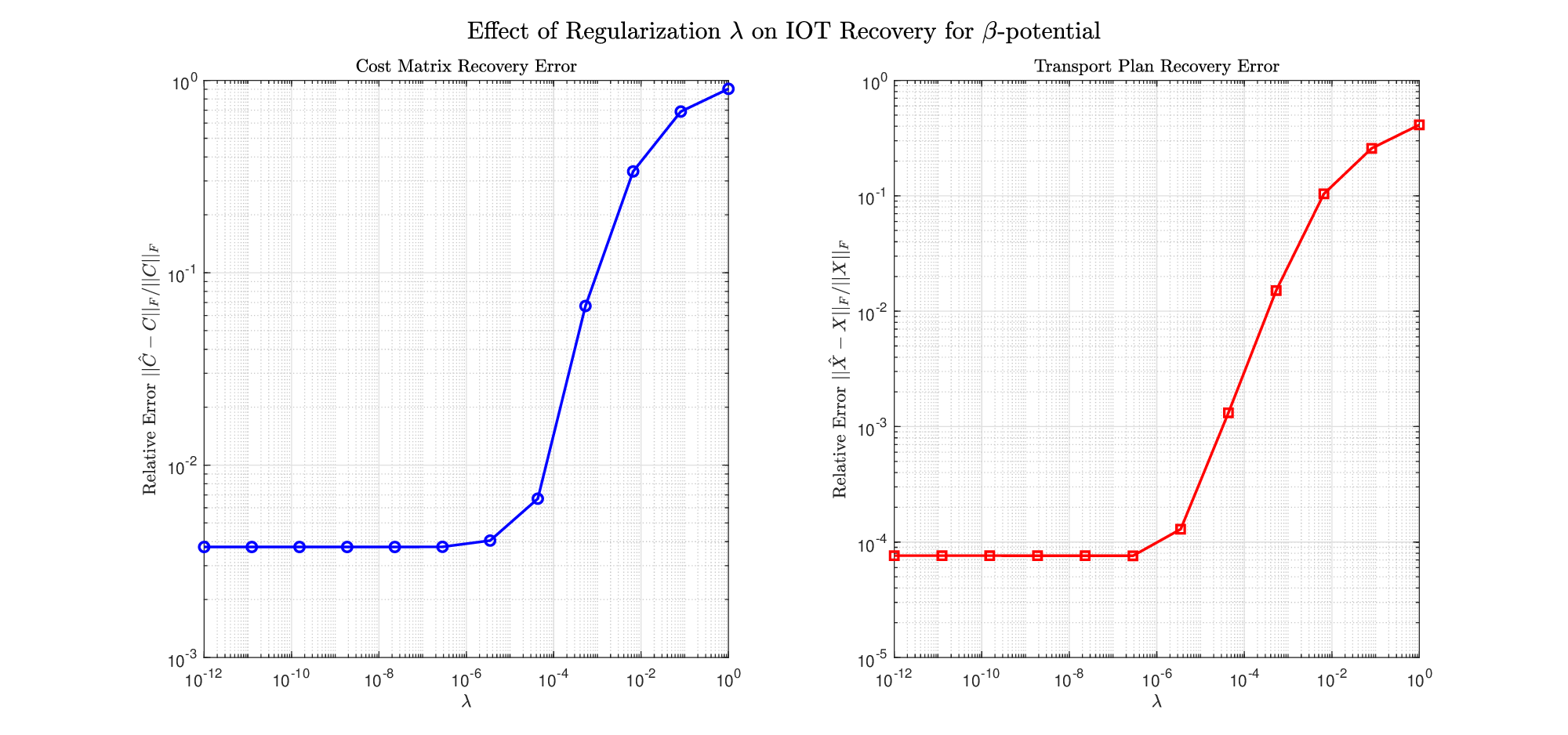}
    \caption{This figure illustrates the impact of the regularization parameter $\lambda$ on the algorithm's performance under the Fermi-Dirac entropy and $\beta$-potential regularizers, where we set $\beta=0.5$ for the latter.}
    \label{fig:lambda2}
\end{figure}

\subsection{Real-World Application - Marriage Matching Preferences}

To evaluate the performance of our proposed  algorithm on a real-world task, we apply it to the problem of inferring marriage matching preferences using the Dutch Household Survey (DHS) dataset\footnote{https://www.dhsdata.nl/site/users/login}. The data preprocessing, feature extraction (11 demographic and personality traits), clustering of individuals into $k_{\text{cluster}}=50$ types for both men and women, and the general experimental setup, including the formulation for the compared ``RiOT (original)" method, are adopted directly from the publicly available codebase\footnote{https://github.com/ruilin-li/Learning-to-Match-via-Inverse-Optimal-Transport} and methodology presented in \cite{li2019learning}.  This ensures a consistent basis for comparison. Briefly, after preprocessing and clustering, the observed marriage data is aggregated into an empirical matching matrix $\hat{X} \in \mathbb{R}^{50 \times 50}$ between types of men and women, with $U_0 \in \mathbb{R}^{11 \times 50}$ and $V_0 \in \mathbb{R}^{11 \times 50}$ representing the feature centroids for male and female types, respectively. The cost matrix is parameterized as $C(A) = -U_0^T A V_0$, where $A \in \mathbb{R}^{11 \times 11}$ is the interaction (affinity) matrix to be learned. Note that in this case, the constraint set $\mathcal{C} = \{C\in\mathbb{R}^{50 \times 50}: C = -U_0^T A V_0, A \in \mathbb{R}^{11 \times 11} \}$ and one can easily verify that $$
\operatorname{Proj}_{\mathcal{C}}(C) = (U_0^\dagger U_0)^TC(V_0^\dagger V_0).
$$
We employ a 5-fold cross-validation scheme. In each fold, the training portion of the observed pairings $\hat{X}_{\text{train}}^{(f)}$ (along with its marginals $\hat{\mu}_{\text{train}}^{(f)}, \hat{\nu}_{\text{train}}^{(f)}$) is used to learn the cost matrix $C_{rec}^{(f)}$ via different algorithms. For our  method, we use the entropy regularizer $\phi(x) = x \log x - x + 1$. We choose $\gamma = 1$ and $\lambda = 0$. The maximum number of \Cref{alg:IBCD} iterations was set to 100. The predictive performance is evaluated by using the learned cost $C_{rec}^{(f)}$ to generate a predicted matching plan $\pi_{\text{pred}}^{(f)}$ for the *test set marginals* (derived from the held-out pairings $\hat{\pi}_{\text{test}}^{(f)}$). This prediction is done using the entropy regularized optimal transport computed via Sinkhorn algorithm \cite{cuturi2013sinkhorn}. We report the average Root Mean Squared Error (RMSE) and Mean Absolute Error (MAE) between $\pi_{\text{pred}}^{(f)}$ and $\hat{X}_{\text{test}}^{(f)}$ over the 5 folds. \Cref{tab:marriage_matching} presents the comparative results of our algorithm against several baselines and the RIOT method\cite{li2019learning}. The baselines include random predictor model (Random), probabilistic matrix factorization (PMF) \cite{mnih2007probabilistic}, classic SVD method (SVD) \cite{koren2009matrix}, and item-based collaborative filtering model (itemKNN) \cite{cremonesi2010performance}. As shown in \Cref{tab:marriage_matching}, our method demonstrates competitive performance and beats other methods in both measures with the fastest speed.

\begin{table}
\centering
\begin{tabular}{lccc}
\hline
Model & RMSE & MAE & Time(s) \\
\hline
Random & 0.00111 & 0.00073 & 0.00640 \\
PMF & 0.00334 & 0.00244 & 0.02018 \\
SVD & 0.00335 & 0.00245 & 0.02090 \\
itemKNN & 0.00096 & 0.00063 & 0.07007 \\
RIOT & 0.00094 & 0.00060 & 0.72104 \\
\textbf{IBCD} & \textbf{0.00091} & \textbf{0.00057} & \textbf{0.00411} \\
\hline
\end{tabular}
\caption{Results of experiments on marriage matching.}
\label{tab:marriage_matching}
\end{table}

\section{Conclusion and Future Work}\label{sec:conclusion}
In this work, we have advanced the understanding of inverse optimal transport (IOT) by developing a comprehensive well-posedness theory for IOT problems regularized by general Bregman functions. We extended the single-level convex reformulation beyond the entropic case and conducted a rigorous analysis of its existence for solutions. From an algorithmic perspective, we proposed an efficient Inexact Block Coordinate Descent (IBCD) method tailored to this framework, proved its linear convergence rate under common quadratic penalties, and demonstrated its practical effectiveness through extensive numerical experiments. These contributions establish a more robust and general foundation for inferring underlying cost structures from observed transport phenomena.

Several interesting directions emerge for future work. First, addressing the challenge of sparse observed transport plans remains critical. When observations $\hat{X}$ are sparse, formulations that rely on properties of dense transport plans, such as Bregman divergences (including the KL divergence), which involve terms like $\log \hat{X}_{ij}$, may become problematic. Future research could explore alternative loss functions or imputation strategies that are robust to sparsity, or develop IOT models that explicitly enforce structured sparsity in the inferred cost or predicted transport plans.

Second, extending the framework to learn a single underlying cost function from multiple observed transport plans is a particularly compelling direction. In this scenario, different transport plans might arise from the same cost structure but under varying marginal distributions or contexts. This setting offers the opportunity to leverage richer information for more robust cost inference. Developing models and algorithms capable of aggregating information from multiple, potentially noisy transport plans would significantly improve the applicability of IOT and unlock new practical and theoretical insights.

\appendix
\section{Proof of Lemma \ref{lem:forward_kkt}}
\label{app:proof_lemma}
\begin{proof}
It suffices to show (b) since both (a) and (c) can be derived by direct computation from (b). Consider the primal Bregman-regularized optimal transport problem:
\begin{equation}
    \min_{X \ge 0} \; \langle C, X \rangle + \gamma \phi(X) \quad \text{subject to} \quad X\mathbf{1} = \mu,\; X^\top\mathbf{1} = \nu.
\end{equation}
Let $u, v \in \mathbb{R}^n$ be Lagrange multipliers associated with the two marginal constraints. The Lagrangian function is defined as:
\begin{align}
    \mathcal{L}(X, u, v) &= \langle C, X \rangle + \gamma\phi(X) - \langle u, X\mathbf{1} - \mu \rangle - \langle v, X^\top\mathbf{1} - \nu \rangle \notag \\
    &= \sum_{i,j=1}^n \left( (C_{ij} - u_i - v_j) X_{ij} + \gamma\phi(X_{ij}) \right) + \langle u, \mu \rangle + \langle v, \nu \rangle.
\end{align}

For fixed $(u,v)$, minimizing $\mathcal{L}(\cdot, u, v)$ over $X \ge 0$ decouples into independent scalar subproblems. For each pair $(i,j)$, define  $z_{ij} := \frac{u_i + v_j - C_{ij}}{\gamma}$ and consider the function $f_{ij}(x) = -\gamma z_{ij} x + \gamma\phi(x)$ for $x \ge 0$. Since $\phi$ is strictly convex by Assumption \ref{asmp:breg}, $f_{ij}$ is strictly convex and attains a unique minimizer $X_{ij}^C$.

The optimality condition for this constrained convex problem is given by the inequality:
\begin{equation}
    f_{ij}'(X_{ij}^C)(x - X_{ij}^C) \ge 0 \quad \forall x \ge 0,
\end{equation}
where $f_{ij}'(x) = \gamma(\phi'(x) - z_{ij})$. Equivalently,
\begin{equation}
    (\phi'(X_{ij}^C) - z_{ij})(x - X_{ij}^C) \ge 0 \quad \forall x \ge 0.
\end{equation}
Since $\phi'$ is monotone and $\psi = \phi^*$, this condition characterizes $X_{ij}^C$ as the projection of $\nabla\psi(z_{ij})$ onto $[0,\infty)$. More explicitly, we have:
\begin{equation}
    X_{ij}^C = \begin{cases}
        \psi^\prime(z_{ij}) & \text{if } \psi^\prime(z_{ij}) \ge 0, \\
        0 & \text{otherwise},
    \end{cases}
\end{equation}
which can be written compactly as $X^C = \bigl(\nabla\psi(z)\bigr)_+$. This yields the first condition in \eqref{eqn:KKT_bregman}.

The primal feasibility conditions $X\mathbf{1} = \mu$ and $X^\top\mathbf{1} = \nu$ constitute the remaining KKT conditions. Conversely, if $(X,u,v)$ satisfies \eqref{eqn:KKT_bregman}, then $X$ minimizes $\mathcal{L}(\cdot,u,v)$ by construction and is primal feasible, hence $(X,u,v)$ is a saddle point of the Lagrangian and therefore optimal.
\end{proof}

\bibliographystyle{plain}
\bibliography{ref}

\end{document}